\documentclass{amsart}
\usepackage{paper}
\graphicspath{{figs/}}
\setcounter{tocdepth}{2}

\title{Method of Moments for Estimation of Noisy Curves}

\author{Phillip Lo}
\thanks{(Phillip Lo) The University of Chicago, Committee on Computational and Applied Mathematics, current institution Biohub, \texttt{phillip.lo@czbiohub.org}}
\author{Yuehaw Khoo}
\thanks{\hspace*{1.15em}(Yuehaw Khoo) The University of Chicago, Department of Statistics, Committee on Computational and Applied Mathematics, \texttt{ykhoo@uchicago.edu}}

\begin{document}

\begin{abstract} 
    In this paper, we study the problem of recovering a ground truth high dimensional piecewise linear curve $C^*(t):[0, 1]\to\bbR^d$ from a high noise Gaussian point cloud with covariance $\sigma^2I$ centered around the curve. We establish that the sample complexity of recovering $C^*$ from data scales with order at least $\sigma^6$. We then show that recovery of a piecewise linear curve from the third moment is locally well-posed, and hence  $O(\sigma^6)$ samples is also sufficient for recovery. We propose methods to recover a curve from data based on a fitting to the third moment tensor with a careful initialization strategy and conduct some numerical experiments verifying the ability of our methods to recover curves. All code for our numerical experiments is publicly available on GitHub.
\end{abstract}
 
\maketitle
\vspace{-3em}
\tableofcontents  

\section{Introduction}
\subsection{Motivation and Related Work}
Manifold learning is a widely-studied problem in statistics in which a low dimensional manifold is fit to high dimensional data in an effort to understand the geometry of the data and circumvent the curse of dimensionality. A vast amount of literature has been dedicated to the noise-free case, where the data lie exactly on some lower dimensional manifold; see, for instance, \cite{meila-review} for a review of more classical methods, or \cite{feff-noiseless}, \cite{kde-noiseless} for more recent results. Work has also been done in the case of low noise, such as in \cite{manifold-unbounded-noise}, \cite{feff-small-noise}, \cite{hautieng-wu}, \cite{manifoldron}, and \cite{cheng-landa}. Less work has been done for the more difficult high noise case, in which each data point is completely dominated by noise. In \cite{feff-high-noise}, the authors establish an algorithm for recovering manifolds with particular smoothness properties corrupted by Gaussian noise with covariance $\sigma^2$ for arbitrary $\sigma$, but their algorithm requires a number of samples exponential in $\sigma^2$. An important earlier work in studying the sample complexity of manifold recovery is \cite{genovese-minimax}, in which the authors use information-theoretic techniques to show that the minimax risk of estimating a Riemannian manifold in Hausdorff distance from $N$ samples is bounded below by order $1/\log N$.

Another field in which recovering a signal from high noise has been studied is in orbit recovery problems. In these problems, an underlying signal is corrupted by some group action and a large amount of additive noise, and the goal is to recover the original signal. Perhaps the most famous such problem is the reconstruction of the 3D structure of macromolecules from cryo-electron microscopy (cryo-EM) images. In cryo-EM, a large number of extremely noisy images are taken of a molecule in various 3D orientations. We can think of these images as projections of an unknown rigid rotation of the Coulomb potential $f:\bbR^3\to\bbR$ along an axis to a two-dimensional image that is then corrupted by a large amount of Gaussian noise. The task of recovering $f$ thus involves both denoising the Gaussian noise and undoing the rotation. See \cite{singer} for a comprehensive review of computational aspects of cryo-EM. A simpler orbit recovery problem that has been well-studied is the multireference alignment (MRA) problem. In the MRA model, the ground truth is a one-dimensional signal $f\in\bbR^d$ and the measurements consist of a random cyclic permutation of $f$ corrupted with additive Gaussian noise. The sample complexity of MRA is established in \cite{mra}, where the authors prove that for large $\sigma$, any algorithm hoping to recover $f$ must require $O(\sigma^6)$ samples.

We are interested in recovering one-dimensional manifolds (i.e., curves) in high dimensions in the presence of large Gaussian noise. Some of the earliest work done in fitting curves to data comes from \cite{principal-curves}, in which the authors coin the term \textit{principal curves} for curves interpolating data; they view principal curves as a nonlinear generaliation of principal component analysis, which foreshadows our methods used in Figure \ref{fig:tracing-lohi}. In our work, for a particular class of curves, we will establish the sample complexity of the problem to be $O(\sigma^6)$ and provide a recovery algorithm that meets that lower bound. The setting of one-dimensional manifolds corresponds to temporal data, and we can think of noisy curves as noisy measurements of some system evolving in time; recovering the underlying curve thus corresponds to recovering the underlying dynamics. In the one-dimensional setting, a great deal of work has been done in the \textit{seriation} problem, where noisy observations of time-dependent data are labeled with ordered timestamps. For instance, in cryo-EM, one might have noisy observations of a biological macromolecule undergoing some conformational change over time; in this context, the seriation problem involves estimating the temporal order of these conformers (see for instance \cite{Moscovich}, \cite{Lederman}). In archaeology, seriation techniques are used to date archeological finds \cite{archaeology}. Various spectral methods have been developed for the seriation problem (\cite{spectral-seriation-1}, \cite{spectral-seriation-2}, \cite{spectral-seriation-3}), but often require strong structural assumptions on the data. More recently, the authors in \cite{yuehaw-seriation} use the Fiedler vector of the graph Laplacian of the data to approach the seriation problem in a more general case. Note that our problem is not to only recover the time ordering of noisy points coming from a curve, but to recover the underlying curve itself. If the distribution of the noise is known, then the time ordering of noisy points can be computed from the underlying curve simply by associating each noisy data point with a point in the underlying curve that maximizes the likelihood.

\subsection{Problem Formulation: Noisy Curve Recovery}
Consider a parametric, non self-intersecting ground truth piecewise linear (PWL) curve $C^*(t)$ in $\bbR^d$, $t\in[0, 1]$. We assume that the curve consists of a finite number of segments. In order to fix a scale, we assume that $\|C^*(t)\|<1$ for all $t\in[0, 1]$. The \textit{noisy curve model} consists of $N$ independent observations
\begin{equation}
y^{(j)} = C^*(\tau^{(j)})  + \sigma\xi^{(j)}, \hspace{2em}j = 1,\dots, N.
\label{eqn:noisy-curve-model}
\end{equation}
Here $\tau\sim U[0, 1]$ is drawn uniformly randomly and $\xi\sim\mathcal{N}(0, I_d)$ denotes isotropic Gaussian noise independent of $\tau$; we assume $\sigma$ is known. We can imagine this as a Gaussian ``fattening'' of the curve $C^*$. Let $P_{C^*,\sigma}$ denote the random variable given by $C^*(\tau) + \sigma\xi$, and let $P_{C^*}$ denote the case where $\sigma = 0$. We formulate the \textit{noisy curve recovery} problem as the estimation of $C^*$ from observations $\{y^{(j)}\}_{j=1}^N$, as depicted in Figure \ref{fig:problem-statement}. Note that we can think of this model as a continuous Gaussian mixture model, where the mixture centers are drawn continuously from $C^*(t)$. 
\begin{figure}[h]
\centering
\includegraphics[width=5in]{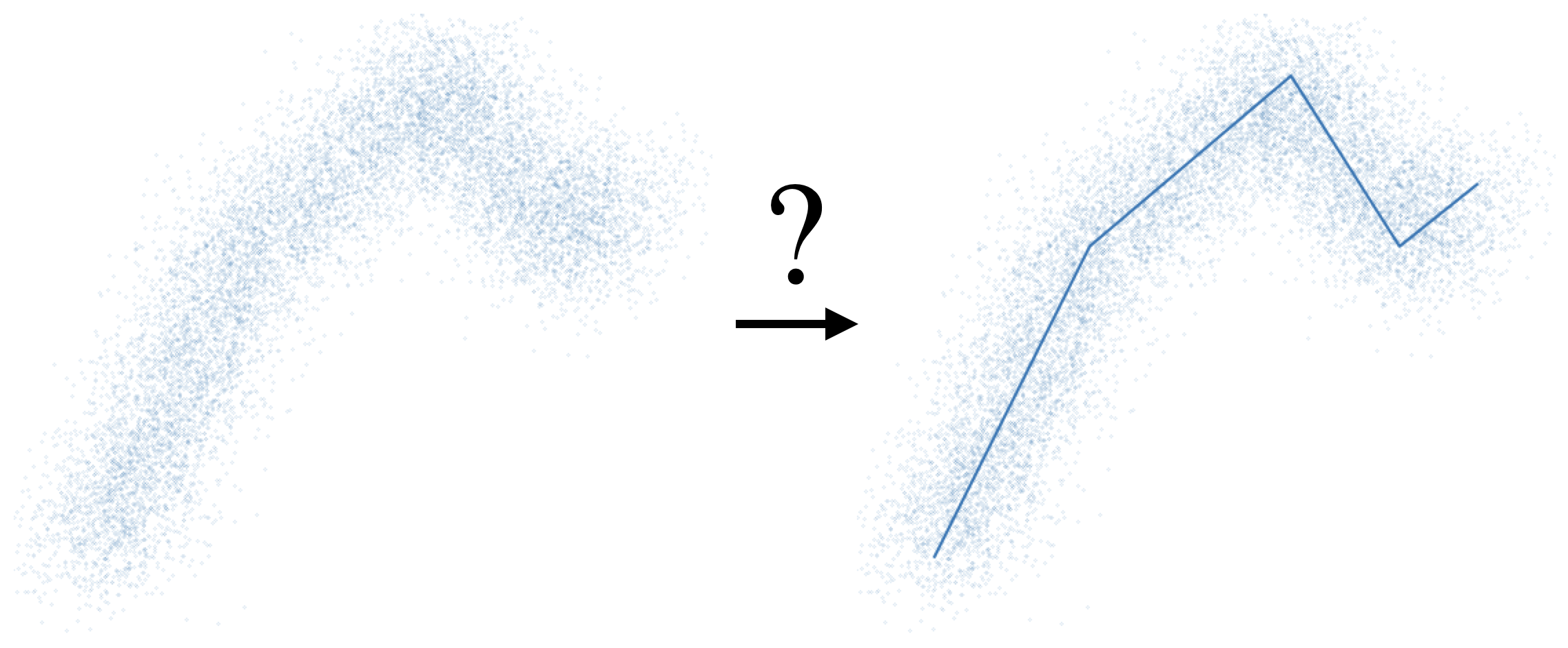}
\caption{An illustration of the noisy curve recovery problem.}
\label{fig:problem-statement}
\end{figure}

In the low noise regime, the salient features of the underlying curve are still visible in the presence of noise, and the curve can be estimated by ``tracing through'' the point cloud coming from (\ref{eqn:noisy-curve-model}). We provide a more detailed account of an algorithm in \S\ref{sec:cloud-tracing} of the supplementary material, but essentially, much like the expectation-maximization step in estimating finite Gaussian mixture models, we can assume that nearby points in the cloud come from nearby points on the underlying curve. We can thus use the local largest principal component of subsets of the point cloud to approximate the tangent space of the curve near that point. However, in the case where noise is large, we can no longer reliably assign points in the point cloud to a nearby point on the underlying curve. This is illustrated in Figure \ref{fig:tracing-lohi}, where we demonstrate the results of tracing through a point cloud coming from a PWL curve in both the low and high noise case. We are henceforth interested in studying the high noise case.
\begin{figure}
\centering
\includegraphics[width=5in]{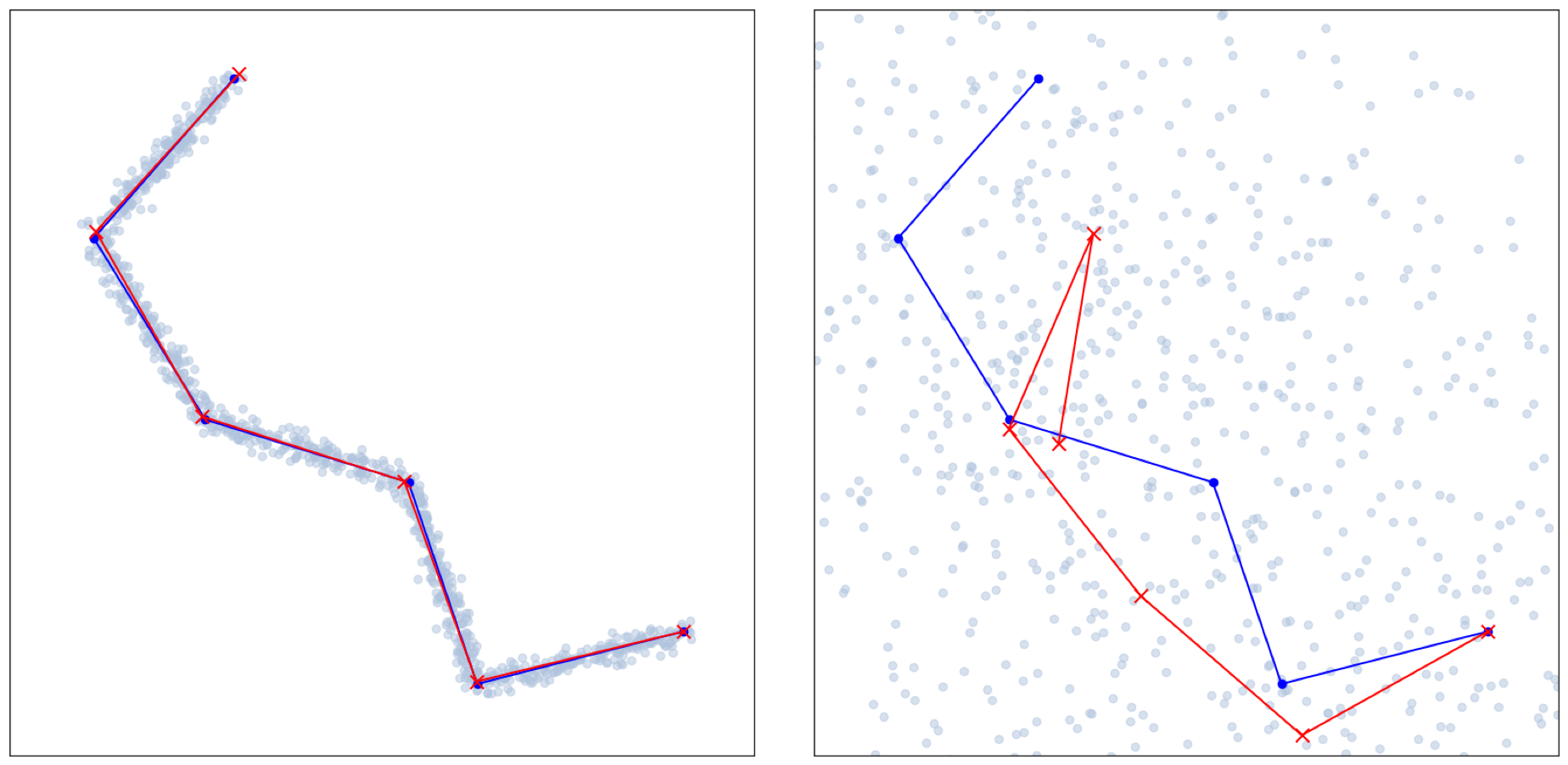}
\caption{Results of a cloud tracing algorithm (detailed in \S\ref{sec:cloud-tracing} of the supplementary material) on a cloud coming from a piecewise linear curve with low noise (left) and high noise (right). In both cases, a ground truth curve in blue is shown with an accompanying point cloud. When the noise is low (i.e., the scale of the noise is smaller than the scale of the geometric features of the curve), the structure of the underlying curve is still visible in the point cloud, and the curve can be successfully traced through (in red). When the noise is large, the local geometric structure of the curve is destroyed and the tracing approach fails.}
\label{fig:tracing-lohi}
\end{figure}

In this paper, we will first prove a result on the sample complexity of the high noise curve recovery problem, that is, the number of samples needed to recover the ground truth underlying curve from data up to some error. In particular, we will show that for sufficiently high noise, the number of samples from (\ref{eqn:noisy-curve-model}) needed to recover a curve scales with order $\sigma^6$. The proof relies on studying the moments of the distribution $P_{C^*,\sigma}$, particularly the third moment. Our approach is based largely on the approach in the proof of the sample complexity of the MRA problem in \cite{mra}; the proof hinges on showing that the first two moments alone do not uniquely determine a curve in a neighborhood. We then prove that the problem of recovering the underlying curve from the \textit{third} moment of the data is in fact locally well-posed, and provide an algorithm to obtain an initial rough estimate of the curve based on a decomposition of the third moment tensor of the curve. The idea of the algorithm is to use the tensor power method to approximate the subspaces in which vertices of the curve lie, then use this as an initialization to a gradient descent scheme to fit a curve defined by the vertices to the third moment.

\subsection{Paper Outline}
The paper is organized as follows. In \S\ref{sec:pwl-curve-moments}, we introduce notation for piecewise linear curves and compute their moments. In \S\ref{sec:sample-complexity-result}, we state and outline the proof of the sample complexity result. In \S\ref{sec:local-well-posed}, we show that locally, the third moment uniquely determines piecewise linear curves in high dimensions. In \S\ref{sec:algo}, we use this local well-posedness to devise an algorithm that recovers piecewise linear curves. We show results from numerical experiments demonstrating the performance of our algorithm in \S\ref{sec:numerical-results}.

\subsection{Tensor Notation}
Throughout this work we will use several notations for different tensor operations; we collect them all here for convenience, although we will also repeat these definitions later in the text as needed for the sake of clarity.

For a vector $v\in\bbR^d$, $v^{\otimes k}$ denotes the $k$-fold tensor power of $v$. For instance, $v^{\otimes 3}$ is the $(d, d, d)$-shaped tensor whose $jkl$ entry is given by $v_jv_kv_l$. For a three-way tensor $X$, $\Sym:\bbR^{d\times d \times d}\to\bbR^{d\times d\times d}$ is the symmetrization operator on three-way tensors, i.e., 
\begin{equation}
    \label{eqn:sym-def}
    \Sym(X)_{jkl} = \frac{1}{6}\left(X_{jkl} + X_{klj} + X_{ljk} + X_{lkj} + X_{kjl} + X_{jlk}\right).
\end{equation}
Note that $\Sym$ is linear.

The following notations are ad-hoc and nonstandard. For $(a, b)$-shaped matrices $M, N, O$, we use $M\otimes N\otimes O$ to denote the six-way tensor $(a, b, a, b, a, b)$-shaped tensor whose $mjnkol$ entry is given by $M_{mj}N_{nk}O_{ol}$; $M^{\otimes 3}$ is shorthand for $M\otimes M\otimes M$. For a six-way tensor $X$ with shape $(a, b, a, b, a, b)$ and a three-way tensor $y$ with shape $(a, a, a)$, $X\star y$  denotes the $(b, b, b)$-shaped tensor resulting from the natural contraction between $X$ and $y$:
\[[X\star y]_{jkl} = \sum_{m,n,o = 1}^a X_{mjnkol}y_{mno}.\]

\section{Piecewise Linear Curves and their Moments}
\label{sec:pwl-curve-moments}
We restrict our analysis to piecewise linear (PWL) curves in order to have a simple explicit representation of a curve. We also require that our curves have domain $[0, 1]$, are open, and have constant speed, i.e., $\|C'(t)\|$ is constant.

Let $0 = t_0 <\dots < t_M = 1$ be points in time and let $c_0 = c(t_0), \dots, c_M = c(t_M)\in\bbR^d$ be the vertices of a PWL curve. Throughout this paper, we use $d$ to denote the ambient dimension of the curve and $M$ to denote the number of segments; we assume that both are known. We assume in general that our dimension is high enough that $d\geq M$. We also assume that $c_i\neq c_j$ for all $i\neq j$. In particular $c_0\neq c_M$, so our curve is open.

We have the following explicit expression for $C(t)$:
\begin{equation}
C(t) = \frac{t - t_{i-1}}{t_i - t_{i - 1}}c_i - \frac{t - t_i}{t_i - t_{i-1}}c_{i-1},\; \text{when }t\in [t_{i-1}, t_i]_{i = 1,\dots, M}.
\label{eqn:curve}
\end{equation}
Since we require $C(t)$ to have constant speed, we must have
\begin{equation}
t_i = \frac{\sum_{k=1}^i \|c_{k} - c_{k-1}\|}{Z}\;\;\;\textrm{for }i = 1,\dots, M
\label{eqn:arclength-param-t}
\end{equation}
where $Z \defeq \sum_{k=1}^M \|c_k - c_{i-1}\|$ is the total length of the curve. The benefit of requiring constant speed is that our curves are completely characterized by the vertices $c_0,\dots,c_M$. In a slight abuse of notation, we thus identify a PWL curve with the $(M+1,d)$-shaped matrix of its row-stacked vertices:
\begin{equation}
C = \begin{bmatrix} -c_0-\\-c_1-\\\vdots\\-c_M-\end{bmatrix}.
\label{eqn:C-mat}
\end{equation}
We assume that the $c_i$ are in generic position, so that the vectors $\{c_i - c_{i-1}\}_{i=1}^M$ are linearly independent.

Our approach to proving lower bounds on the sample complexity of noisy curve recovery is based on studying the moments of a curve, defined by
\begin{equation}
m_k(C) = \int_0^1 C(t)^{\otimes k}\,dt.
\label{eqn:curve-mom}
\end{equation}
Here, $\cdot^{\otimes k}$ denotes the $k$-fold tensor power of a vector. For instance, for $v\in\bbR^d$, $v^{\otimes 3}$ is the $(d, d, d)$-shaped tensor whose $jkl$ entry is given by $v_jv_kv_l$. The use of moment tensors for inverse problems has been studied in many other contexts, such as finite Gaussian mixture models in \cite{gmm-mom}.

We now derive expressions for the first three moments of $C$ in terms of the vertices $c_0,\dots, c_M$.
\begin{prop}[Moments of Piecewise Linear Curves]
\label{prop:moments}
Let $C$ be the constant speed PWL curve with domain $[0, 1]$ with vertices $c_0,\dots, c_M$. Let $Z$ be the length of $C$. Then the first three moments of $C$ are given by the formulas
\begin{align}
    \begin{split}
    m_1(C) &= \frac{1}{2}\sum_{i=1}^M\frac{\|c_i - c_{i-1}\|}{Z}(c_{i-1} + c_i)\\
    m_2(C) &= \frac{1}{6}\sum_{i=1}^M \frac{\|c_i - c_{i-1}\|}{Z}\left((c_{i-1} + c_i)^{\otimes 2} + c_{i-1}^{\otimes 2} + c_i^{\otimes 2}\right)\\
    m_3(C) &= \frac{1}{12}\sum_{i=1}^M\frac{\|c_i - c_{i-1}\|}{Z}\left((c_{i-1}+c_i)^{\otimes 3} + 2c_{i-1}^{\otimes 3} + 2c_i^{\otimes 3}\right).
    \end{split}
    \label{eqn:clean-curve-moments}
\end{align}
\end{prop}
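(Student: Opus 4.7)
The plan is to compute the integral $\int_0^1 C(t)^{\otimes k}\,dt$ one segment at a time and then reassemble. First I would split $m_k(C) = \sum_{i=1}^M \int_{t_{i-1}}^{t_i} C(t)^{\otimes k}\,dt$, and on the $i$th segment perform the change of variable $s = (t - t_{i-1})/(t_i - t_{i-1})$, which by (\ref{eqn:curve}) gives $C(t) = (1-s)c_{i-1} + s c_i$ and contributes a Jacobian factor of $t_i - t_{i-1}$. By the constant-speed condition (\ref{eqn:arclength-param-t}), $t_i - t_{i-1} = \|c_i - c_{i-1}\|/Z$, which produces exactly the arclength weights appearing in (\ref{eqn:clean-curve-moments}). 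So the statement reduces to verifying, for $k=1,2,3$, the single-segment identity
\begin{equation*}
  I_k(a,b) \defeq \int_0^1 \bigl((1-s)a + sb\bigr)^{\otimes k}\,ds = \begin{cases} \tfrac{1}{2}(a+b) & k=1,\\ \tfrac{1}{6}\bigl((a+b)^{\otimes 2} + a^{\otimes 2} + b^{\otimes 2}\bigr) & k=2,\\ \tfrac{1}{12}\bigl((a+b)^{\otimes 3} + 2a^{\otimes 3} + 2b^{\otimes 3}\bigr) & k=3.\end{cases}
\end{equation*}

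Next I would expand the tensor power multilinearly. Writing $((1-s)a + sb)^{\otimes k}$ as a sum over $2^k$ strings choosing $a$ or $b$ in each slot, and grouping by the number $j$ of $b$-slots, the integrand becomes $\sum_{j=0}^k (1-s)^{k-j} s^j\, S_{k,j}(a,b)$, where $S_{k,j}(a,b)$ is the symmetric sum of all $\binom{k}{j}$ orderings of $a^{\otimes(k-j)} \otimes b^{\otimes j}$. The scalar integrals are Beta-function evaluations $\int_0^1 (1-s)^{k-j} s^j\,ds = \frac{(k-j)!\,j!}{(k+1)!}$, so $I_k(a,b) = \sum_j \frac{(k-j)!\,j!}{(k+1)!} S_{k,j}(a,b)$. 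For $k=1$ this instantly yields $\tfrac12(a+b)$.

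For $k=2,3$ the remaining task is the purely algebraic step of matching coefficients. For $k=2$, $I_2(a,b) = \tfrac{1}{3}a^{\otimes 2} + \tfrac{1}{6}(a\otimes b + b\otimes a) + \tfrac{1}{3}b^{\otimes 2}$; using $(a+b)^{\otimes 2} = a^{\otimes 2} + a\otimes b + b\otimes a + b^{\otimes 2}$ one verifies the claimed form. For $k=3$, $I_3(a,b) = \tfrac{1}{4}a^{\otimes 3} + \tfrac{1}{12}S_{3,1} + \tfrac{1}{12}S_{3,2} + \tfrac{1}{4}b^{\otimes 3}$; expanding $(a+b)^{\otimes 3} = a^{\otimes 3} + S_{3,1}(a,b) + S_{3,2}(a,b) + b^{\otimes 3}$ shows the mixed symmetric pieces match with coefficient $1/12$ and the pure terms combine to $1/4$ on each side. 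Summing over $i$ with the weights $\|c_i - c_{i-1}\|/Z$ then delivers (\ref{eqn:clean-curve-moments}).

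The only mildly delicate point is keeping bookkeeping straight for the symmetric tensor sums $S_{k,j}$; the rest is a direct computation, so I would not expect a conceptual obstacle beyond ensuring the nonstandard but convenient rewriting in terms of $(a+b)^{\otimes k}$ plus the pure tensor powers is matched correctly.
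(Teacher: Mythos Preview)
Your proposal is correct and follows essentially the same approach as the paper: split the integral over segments, apply the change of variables $s = (t - t_{i-1})/(t_i - t_{i-1})$ to reduce each piece to $\frac{\|c_i - c_{i-1}\|}{Z}\int_0^1 ((1-s)c_{i-1} + sc_i)^{\otimes k}\,ds$, and then expand the tensor power and integrate term by term. Your use of the Beta integral and the symmetric sums $S_{k,j}$ simply makes explicit the ``direct computation'' the paper leaves to the reader.
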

\begin{proof}
Consider the segment-wise change of variables 
\begin{equation}
    \frac{t - t_{i-1}}{t_i - t_{i-1}}\mapsto t, \hspace{2em} dt\mapsto (t_i - t_{i-1})dt
    \label{eqn:change-of-vars}
\end{equation}
For any $k$, we can apply this change of variables and (\ref{eqn:arclength-param-t}) to get an expression for $m_k(C)$ in terms of the vertices $c_i$:
\begin{align} 
    \begin{split}
    m_k(C) &= \int_0^1 C(t)^{\otimes k}\,dt\\
    &= \sum_{i=1}^M\int_{t_{i-1}}^{t_i} \left(\frac{t - t_{i-1}}{t_i - t_{i - 1}}c_i - \frac{t - t_i}{t_i - t_{i-1}}c_{i-1}\right)^{\otimes k}\,dt\\ 
    &= \sum_{i=1}^M\int_0^1\left(tc_i + (1-t)c_{i-1}\right)^{\otimes k}(t_i - t_{i-1})\,dt\\
    &= \sum_{i=1}^M\frac{\|c_i - c_{i-1}\|}{Z}\int_0^1((1-t)c_{i-1} + tc_i)^{\otimes k}\,dt.
    \end{split}
\end{align}
The desired result can be obtained by expanding the tensor power in the integral for $k = 1, 2, 3$ and direct computation of the integral.
\end{proof}

Recall our assumption that $d\geq M$ and observe that a PWL curve $C$ with $M$ segments naturally lives in an $M$-dimensional subspace $V$ of $\bbR^n$ spanned by the vectors $\{c_i - c_{i-1}\}_{i=1}^M$. We can in fact compute an orthonormal basis for this subspace by taking the top $M$ eigenvectors of the second moment $m_2(C)$; indeed, the second moment is a rank-$M$ matrix for generic $C$. Note that the second moment $m_2(C)$ can be estimated with $O(\sigma^4)$ samples from a noisy curve using the unbiased estimator in Lemma \ref{lem:unbiased-estimators}. Therefore we will often assume $M=d$, since if $d>M$, we can project the coordinates of $C$ down to $V$. Nonetheless, for semantic clarity, we will often use $M$ when referring to the number of segments and $d$ when talking about the dimension.

Later on in the paper, we will show two important results involving moment matching. We will show that matching a PWL curve to a third moment $m_3(C)$ is locally well-posed, while matching a PWL curve to the first two moments $m_1(C)$ and $m_2(C)$ is locally ill-posed. A careful analysis of the moments is difficult owing to the $\|c_i - c_{i-1}\|/Z$ terms (note that $Z$ also depends on $C$). To aid in our analysis and make things easier to compute, we introduce the following \textit{relaxed} curve moments that replace the $\|c_i - c_{i-1}\|/Z$ terms with new variables. We will show that it is sufficient to consider moment matching in this relaxed setting.
\begin{defn}[Relaxed Curve Moments]
Let $C\in\bbR^{(M+1)\times d}$ and $p\in\bbR^M$. Then we define the \textit{relaxed moments} of $C$ to be
\begin{align}
    \begin{split}
    \mu_{1}(C, p) &= \frac{1}{2}\sum_{i=1}^M p_i(c_{i-1} + c_i)\\
    \mu_{2}(C, p) &= \frac{1}{6}\sum_{i=1}^M  p_i\left((c_{i-1} + c_i)^{\otimes 2} + c_{i-1}^{\otimes 2} + c_i^{\otimes 2}\right)\\
    \mu_{3}(C, p) &= \frac{1}{12}\sum_{i=1}^M p_i\left((c_{i-1}+c_i)^{\otimes 3} + 2c_{i-1}^{\otimes 3} + 2c_i^{\otimes 3}\right).
    \end{split}
\end{align}
We can rewrite these in a more vectorized way that makes our analysis more straightforward.
\begin{align}
    \begin{split}
    \mu_{1}(C, p) &= \frac{1}{2}C^TA^sp\\
    \mu_{2}(C, p) &= C^T\ol{A}(p)C\\
    \mu_{3}(C, p) &= C^{\otimes 3}\star\alpha(p).
    \end{split}
    \label{eqn:decoupled-moments}
\end{align}
Here, $A^s$ is an $(M+1,M)$-shaped matrix constant and $\ol{A}(p)$ is a symmetric $(M+1, M+1)$-shaped matrix that is a linear function of $p$. The six-way tensor $C^{\otimes 3}$ has shape $(M+1, d, M+1, d, M+1, d)$ and has $mjnkol$ entry given by $C_{mj}C_{nk}C_{ol}$. The three-way tensor $\alpha(p)$ is a linear function of $p$ and is symmetric with shape $(M+1, M+1, M+1)$. The operation $\star$ denotes the natural tensor contraction between tensors of shape $(M+1, d, M+1, d, M+1, d)$ and $(M+1, M+1, M+1)$:
\begin{equation}
    [X\star y ]_{jkl} = \sum_{m,n,o = 0}^M X_{mjnkol}y_{mno}.
    \label{eqn:star-contraction}
\end{equation}
The exact forms of $A^s$, $\ol{A}(p)$, and $\alpha(p)$ are given in \S\ref{sec:custom-notation} of the supplementary material.
\label{def:decoupled-curve-moments}
Note that in the case where $p$ is the vector of proportional segment lengths of $C$, i.e.,  $p_i = \|c_i - c_{i-1}\|/Z$ for all $i = 1,\dots, M$, the relaxed moments are precisely equal to the actual moments.
\end{defn}

We now present expressions for the Jacobians of the error between a predicted relaxed third moment and a ground truth third moment. The proof is by standard matrix calculus techniques, which we omit.
\begin{lem}[Jacobians of Relaxed Moments]
Let $C\in\bbR^{(M+1)\times d}$ be the matrix representation of a PWL curve and $p\in\bbR^M$; note that we do not require $p$ to be equal to the proportional segment lengths of $C$. Let $m_3\in\bbR^{d\times d \times d}$ be some third moment tensor. Define the third moment loss
\begin{equation}
    L_{m_3}(C, p) = \|\mu_{3}(C, p) - m_3\|_F^2,
    \label{eqn:loss-defns} 
\end{equation}
where $\|\cdot\|_F^2$ is the squared Frobenius norm on tensors, i.e., the sum of the squared entries.  Let $y\in\bbR^{(M+1)\times d}$ and $z\in\bbR^{M}$. Let $\alpha(p)$ be as in (\ref{eqn:third-mom-alpha}) and $\star$ be as in (\ref{eqn:star-contraction}). Then the derivatives of the third moment loss with respect to $C$ and $p$ applied to $y$ and $z$ respectively are given by
\begin{align}
    \begin{split}
    \inner{\grad_C L_{m_3}(C, p)}{y} &= \Bigg\langle 2\left[C^{\otimes 3}\star\alpha(p)-m_3\right],(C\otimes C\otimes y + C\otimes y\otimes C + y\otimes C\otimes C)\star\alpha(p)\Bigg\rangle\\
    \inner{\grad_p L_{m_3}(C, p)}{z} &= \Bigg\langle 2\left[C^{\otimes 3}\star\alpha(p)-m_3\right], C^{\otimes 3}\star \alpha(z)\Bigg\rangle. 
    \end{split}
    \label{eqn:loss-jacs-applied}
\end{align}
\label{lem:loss-jacs}
\end{lem}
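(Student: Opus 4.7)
My plan is to compute both gradients by a direct application of the chain rule to $L_{m_3}(C,p) = \langle \mu_3(C,p) - m_3,\, \mu_3(C,p)-m_3\rangle$. Since the squared Frobenius norm is a symmetric bilinear form and $\mu_3$ depends differentiably on both $C$ and $p$, the directional derivative of $L_{m_3}$ at $(C,p)$ in direction $(y,z)$ equals $2\langle \mu_3(C,p) - m_3,\, D\mu_3(C,p)[y,z]\rangle$. Thus it suffices to compute the two partial directional derivatives $D_C \mu_3(C,p)[y]$ and $D_p \mu_3(C,p)[z]$ and then pair each with $2(\mu_3(C,p)-m_3) = 2(C^{\otimes 3}\star\alpha(p) - m_3)$.

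For $D_p \mu_3(C,p)[z]$, I would use the fact, already noted in Definition~\ref{def:decoupled-curve-moments}, that $\alpha(p)$ is linear in $p$. Hence $D_p\alpha(p)[z] = \alpha(z)$, and since the contraction $\star$ is bilinear in its two arguments with $C^{\otimes 3}$ held fixed, we immediately get $D_p\mu_3(C,p)[z] = C^{\otimes 3}\star\alpha(z)$. Pairing this with $2(\mu_3(C,p) - m_3)$ yields the second line of \eqref{eqn:loss-jacs-applied}.

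For $D_C\mu_3(C,p)[y]$, the calculation is the standard product rule applied to the six-way tensor $C^{\otimes 3}$ whose entries are $C_{mj}C_{nk}C_{ol}$. Differentiating in direction $y$ produces three terms, one for each tensor factor in which $C$ is replaced by $y$:
\begin{equation}
D_C\bigl(C^{\otimes 3}\bigr)[y] = y\otimes C\otimes C + C\otimes y\otimes C + C\otimes C\otimes y.
\end{equation}
Since $\star$ contracts only the three ``$M{+}1$''-slots and leaves the ``$d$''-slots untouched, it commutes with the sum and with the substitutions of $y$ for $C$ in any single factor, so $D_C\mu_3(C,p)[y] = (y\otimes C\otimes C + C\otimes y\otimes C + C\otimes C\otimes y)\star\alpha(p)$. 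Pairing with $2(C^{\otimes 3}\star\alpha(p)-m_3)$ gives the first line of \eqref{eqn:loss-jacs-applied}.

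The only genuinely delicate point is bookkeeping for the tensor contraction: one must check that in $D_C(C^{\otimes 3})[y]\star\alpha(p)$ the indices summed against $\alpha(p)$ are still precisely the $M{+}1$-dimensional slots $(m,n,o)$, and that the three terms are distinguishable (i.e., not collapsed by symmetry) before we take the inner product with the residual tensor. This is unproblematic because $\alpha(p)$ is symmetric in its three indices and the residual tensor in general is not symmetric in the sense that would combine the three terms into a single factor of $3$; keeping them written out separately is what the statement does, so no simplification is lost. With this verified, both formulas follow directly and no further computation is required.
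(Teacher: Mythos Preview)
Your proposal is correct and is precisely the standard chain-rule/product-rule computation the paper has in mind; indeed, the paper omits the proof entirely, stating only that it ``is by standard matrix calculus techniques, which we omit.'' Your derivation fills in exactly those techniques, so there is nothing to compare.
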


\section{The Sample Complexity Lower Bound}
\label{sec:sample-complexity-result}
To talk about the sample complexity of recovering PWL curves, we need a measure of the distance between two curves. We choose the following natural distance.
\begin{notn}
\label{notn:curve-dist}
Let $C(t)$ and $\Gamma(t)$ be two non-closed parametric curves on $[0, 1]$. Then we define the distance between the two curves to be the average squared Euclidean distance between the curves:
\begin{equation}
    \label{eqn:curve-dist}
    \rho(C, \Gamma) = \int_0^1 \|C(t)-\Gamma(t)\|^2\,dt.
\end{equation}
We assume that $\Gamma(t)$ is parameterized so that $\rho(C(t),\Gamma(t)) \leq \rho(C(t), \Gamma(1-t))$; i.e., $\Gamma$ is oriented in the direction that minimizes the distance.
\end{notn}

We first prove that curve recovery from the first and second moment alone is not locally well-posed. In other words, we can find two curves that are arbitrarily close together with exactly the same first and second moment. As discussed in \S{\ref{sec:pwl-curve-moments}}, we assume that the number of segments $M$ is equal to the ambient dimension $d$.
\begin{prop}
\label{prop:matching-m1m2}
Let $C^*(t):[0, 1]\to\bbR^d$ be a PWL constant speed curve with $M = d$ segments. For almost all such $C^*$, for sufficiently small $\ep>0$, there exists a PWL curve $\Gamma(t)\neq C^*(t)$ with $m_1(C^*) = m_1(\Gamma)$ and $m_2(C^*) = m_2(\Gamma)$ such that $\|C^* - \Gamma\|_F = \ep$, where the norm here is on the matrix representation of the curves.
\end{prop}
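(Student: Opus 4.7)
The plan is to argue by dimension counting combined with the implicit function theorem applied to the moment map $\Psi(C) \defeq (m_1(C), m_2(C))$. With $M = d$, the curve space $\bbR^{(M+1)\times d}$ has dimension $d^2 + d$ while the codomain $\bbR^d \times \mathrm{Sym}^2(\bbR^d)$ has dimension $(d^2+3d)/2$, so one expects the fiber to have dimension $d(d-1)/2$, which is strictly positive for $d \geq 2$. If $D\Psi|_{C^*}$ is surjective, the implicit function theorem guarantees that $\Psi^{-1}(\Psi(C^*))$ is a smooth submanifold of dimension $d(d-1)/2$ through $C^*$; any point on this submanifold at Frobenius distance $\varepsilon$ from $C^*$ (and close enough to $C^*$ that it remains a non-self-intersecting PWL curve) furnishes the desired $\Gamma$. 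The proof thus reduces to showing that $D\Psi|_{C^*}$ achieves the maximal rank $d + d(d+1)/2$ on a full-measure set of $C^*$.

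To analyze $D\Psi$, I would exploit the relaxed moment decomposition of Definition \ref{def:decoupled-curve-moments}. Writing $p(C)$ for the vector of proportional segment lengths so that $m_k(C) = \mu_k(C, p(C))$, the chain rule gives
\begin{equation*}
  D\Psi|_{C^*}(v) = \partial_C \Psi^{\mu}(C^*, p(C^*))\,v + \partial_p \Psi^{\mu}(C^*, p(C^*))\, Dp|_{C^*}(v),
\end{equation*}
where $\Psi^{\mu}(C,p) \defeq (\mu_1(C,p), \mu_2(C,p))$ is polynomial in $(C,p)$ with Jacobian factors explicit from (\ref{eqn:decoupled-moments}). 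The only non-polynomial dependence in $D\Psi$ is through $p(C)$ (with entries $\|c_i - c_{i-1}\|/Z$), which is real-analytic on the open dense set where no two consecutive vertices coincide. Since the maximal minors of $D\Psi|_{C^*}$ are therefore real-analytic in $C^*$ on this set, the rank-drop locus is either the whole set or a proper analytic subvariety, and hence of Lebesgue measure zero. It therefore suffices to exhibit a single $C^\circ$ at which $D\Psi|_{C^\circ}$ is surjective.

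The main obstacle is producing such a $C^\circ$ and verifying surjectivity by hand. I would choose a symmetric configuration in which consecutive difference vectors $c_i^\circ - c_{i-1}^\circ$ are pairwise orthogonal and of equal length, so that $p(C^\circ) = (1/M,\ldots,1/M)$ is constant, $Dp|_{C^\circ}$ has a compact closed form, and the structure matrices $A^s$ and $\ol{A}(p(C^\circ))$ in (\ref{eqn:decoupled-moments}) diagonalize in the orthonormal basis aligned with the $c_i^\circ - c_{i-1}^\circ$. In that basis the block matrix representing $D\Psi|_{C^\circ}$ becomes sparse enough to verify by direct inspection that its image covers all $d$ coordinates of $\bbR^d$ together with all $d(d+1)/2$ independent entries of $\mathrm{Sym}^2(\bbR^d)$. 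The delicate part is that the $\partial_C$ and $\partial_p$ contributions couple via $Dp|_{C^\circ}$, so one must check that enough perturbation directions survive after combining the two summands; the symmetric choice of $C^\circ$ is essentially what makes this bookkeeping tractable. Once surjectivity is established at $C^\circ$, the previous paragraph promotes it to almost every $C^*$, and the implicit function theorem then produces $\Gamma$ at Frobenius distance exactly $\varepsilon$ from $C^*$ for all sufficiently small $\varepsilon$.
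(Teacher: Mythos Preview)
Your approach is correct and follows the same high-level strategy as the paper: a dimension count shows the moment-matching system is underdetermined, generic full rank of the Jacobian is established, and the implicit function (equivalently, regular level set) theorem then yields a positive-dimensional fiber through $C^*$. The paper differs in implementation: rather than working directly with $\Psi(C) = (m_1(C), m_2(C))$ and handling the non-polynomial dependence through $p(C)$ via real-analyticity as you propose, it introduces auxiliary variables $(p, Z)$ together with the constraints $Z^2 p_i^2 = \|c_i - c_{i-1}\|^2$ and $\sum_i p_i = 1$, so that the entire system becomes polynomial in $(C, p, Z)$; the count then reads $(M+1)^2$ variables against $M(M+1)/2 + 2M + 1$ equations, again giving fiber dimension $M(M-1)/2$. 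For the full-rank step, the paper states (proof omitted) an algebraic criterion---$Df$ drops rank only when the all-ones vector lies in the image of $C^*$---whereas you propose to exhibit a single symmetric witness $C^\circ$ and invoke real-analyticity to propagate full rank almost everywhere. Both routes leave this verification somewhat sketched; your witness argument is exactly in the spirit of the paper's Lemma~\ref{lem:determinant}, and either would close the argument. The auxiliary-variable trick buys a purely polynomial system (so the rank condition is algebraic rather than merely analytic), while your direct approach keeps the variable count minimal at the cost of tracking the $Dp|_{C^\circ}$ contribution through the chain rule.
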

The detailed proof is provided in \S\ref{sec:matching-m1m2-proof} of the supplementary material; the idea is that matching the first and second moment of a curve can be expressed as an underdetermined system of polynomials whose solutions form a manifold of positive dimension.

Since $C^*$ and $\Gamma$ being close as matrices implies that $C^*(t)$ and $\Gamma(t)$ are close as curves, we have the following immediate corollary of Proposition \ref{prop:matching-m1m2}.
\begin{cor}
\label{cor:matching-m1m2-curve}
As in Proposition \ref{prop:matching-m1m2}, let $C^*(t):[0, 1]\to\bbR^d$ be a PWL constant speed curve with $M = d$ segments. For almost all such $C^*$, for sufficiently small $\ep>0$, there exists a PWL curve $\Gamma(t)\neq C^*(t)$ with $m_1(C^*) = m_1(\Gamma)$ and $m_2(C^*) = m_2(\Gamma)$ such that $\|C^*(t) - \Gamma(t)\| \leq \ep$ for all $t\in[0, 1]$.
\end{cor}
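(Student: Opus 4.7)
The plan is to deduce pointwise curve closeness from vertex matrix closeness by showing that the map sending a vertex matrix to the associated constant-speed PWL curve (viewed as a function $[0,1] \to \mathbb{R}^d$) is locally Lipschitz, with Lipschitz constant $K$ depending only on $C^*$. Once this is shown, the corollary follows immediately by invoking Proposition~\ref{prop:matching-m1m2} with $\epsilon$ replaced by $\epsilon/K$: the matching of the first two moments is preserved, and the pointwise bound $\|C^*(t) - \Gamma(t)\| \leq \epsilon$ holds for all $t$.

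Concretely, let $\Gamma$ be the curve produced by Proposition~\ref{prop:matching-m1m2} with vertices $\gamma_0, \dots, \gamma_M$ at Frobenius distance $\epsilon' = \epsilon/K$ from $C^*$, so each $\|c_i - \gamma_i\| \leq \epsilon'$. First I would bound the discrepancy in the constant-speed breakpoints: since the $t_i$ and $s_i$ are given by (\ref{eqn:arclength-param-t}) as smooth rational functions of the vertices, and since the genericity of $C^*$ ensures the total length $Z$ is bounded below, there is a constant $C_1 = C_1(C^*)$ with $|t_i - s_i| \leq C_1 \epsilon'$ for all $i$. Next, fix $t \in [0,1]$ and suppose first that $t \in [t_{i-1}, t_i] \cap [s_{i-1}, s_i]$ for a common index $i$. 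Using (\ref{eqn:curve}), both $C^*(t)$ and $\Gamma(t)$ are affine combinations of the endpoints of their respective $i$-th segments, and a direct triangle-inequality estimate gives
\begin{equation*}
    \|C^*(t) - \Gamma(t)\| \leq \max_k \|c_k - \gamma_k\| + C_2 \max_k |t_k - s_k|,
\end{equation*}
where $C_2$ comes from the uniform bound $\|c_k\| < 1$ and the lower bound on segment lengths; combining with the previous step yields a bound linear in $\epsilon'$.

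The main obstacle is the case in which $t$ lies in different segments of $C^*$ and $\Gamma$, say $t \in [t_{i-1}, t_i]$ but $t \in [s_{j-1}, s_j]$ with $j \neq i$. This can only occur when $t$ is within $C_1 \epsilon'$ of a breakpoint common to both curves (since the $s_k$ are within $C_1 \epsilon'$ of the $t_k$). On such a small interval, both curves are Lipschitz with constants equal to their lengths $Z$ and $Z_\Gamma$, which are bounded above by a constant depending on $C^*$ (again using $\|c_k\| < 1$ and $\|c_k - \gamma_k\| \leq \epsilon'$). Routing the estimate through the nearest shared breakpoint, where one can apply the previous paragraph's bound, absorbs this case into the same $O(\epsilon')$ estimate. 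Taking $K$ to be the resulting aggregate constant and choosing $\epsilon' = \epsilon/K$ finishes the proof.
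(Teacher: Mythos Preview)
Your proposal is correct and follows the same idea as the paper: the paper treats the corollary as an immediate consequence of Proposition~\ref{prop:matching-m1m2}, simply asserting that closeness of the vertex matrices implies pointwise closeness of the constant-speed curves, without spelling out the Lipschitz estimate you carefully work through. Your argument supplies the details the paper omits (control of the breakpoints via~(\ref{eqn:arclength-param-t}), the common-segment estimate from~(\ref{eqn:curve}), and the near-breakpoint case handled via the curves' Lipschitz constants), so there is nothing to correct.
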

The application of Corollary \ref{cor:matching-m1m2-curve} is to show that the hypotheses of the following result are not vacuous for $\ell = 3$. The proof is rather technical and left to \S\ref{sec:main-chi2-bd-proof} of the supplementary material.
\begin{prop}
\label{prop:main-chi2-bd}
Let $C$, $\Gamma$ be mean zero parametric curves such that $\|C(t) - \Gamma(t)\|\leq 1/3$  and $\|C(t)\|,\|\Gamma(t)\|<1$ for all $t\in[0,1]$. Suppose $m_k(C)= m_k(\Gamma)$ for all $k \leq \ell - 1$. Then for $\sigma \geq 1$, we have the following bound on the $\chi^2$ divergence between $P_{C,\sigma}$ and $P_{\Gamma,\sigma}$:
\begin{equation}
    \label{eqn:main-chi2-bd}
    \chi^2(P_{C,\sigma}\|P_{\Gamma,\sigma})\leq \calK_\ell\sigma^{-2\ell}\rho(C,\Gamma),
\end{equation}
where $\calK_\ell$ is a constant that depends on $\ell$ only.
\end{prop}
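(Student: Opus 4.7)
The plan is to reduce the $\chi^2$ bound to an exact moment-matching identity that emerges from the Gaussian structure. Define $f_C(x)\defeq p_{C,\sigma}(x)/\phi_\sigma(x) = \int_0^1 \exp(\langle x, C(t)\rangle/\sigma^2 - \|C(t)\|^2/(2\sigma^2))\,dt$ and analogously $f_\Gamma$. Then
\[\chi^2(P_{C,\sigma}\|P_{\Gamma,\sigma}) = \int \phi_\sigma(x)\,\frac{(f_C(x)-f_\Gamma(x))^2}{f_\Gamma(x)}\,dx,\]
so the task splits into controlling the numerator $\int \phi_\sigma(f_C-f_\Gamma)^2\,dx$ and obtaining a uniform lower bound on $f_\Gamma$.

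For the numerator, expand the square and apply Fubini together with the Gaussian MGF identity $\int \phi_\sigma(x)e^{\langle x,v\rangle/\sigma^2}\,dx = e^{\|v\|^2/(2\sigma^2)}$; the cross terms telescope to
\[\int \phi_\sigma(f_C-f_\Gamma)^2\,dx = \int_0^1\!\!\int_0^1\!\left[e^{\langle C(s),C(t)\rangle/\sigma^2}-2e^{\langle C(s),\Gamma(t)\rangle/\sigma^2}+e^{\langle \Gamma(s),\Gamma(t)\rangle/\sigma^2}\right]ds\,dt.\]
Taylor expanding each exponential, using $\langle u,v\rangle^k = \langle u^{\otimes k},v^{\otimes k}\rangle$, and applying Fubini once more converts this to $\sum_{k\geq 0}\|m_k(C)-m_k(\Gamma)\|_F^2/(k!\sigma^{2k})$. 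The moment-matching hypothesis kills all terms with $k\leq\ell-1$. For each surviving term I would use the telescoping identity
\[C(t)^{\otimes k}-\Gamma(t)^{\otimes k}=\sum_{j=0}^{k-1}C(t)^{\otimes j}\otimes\bigl(C(t)-\Gamma(t)\bigr)\otimes\Gamma(t)^{\otimes(k-1-j)},\]
which combined with $\|C(t)\|,\|\Gamma(t)\|<1$ and Cauchy--Schwarz on the $t$-integral yields $\|m_k(C)-m_k(\Gamma)\|_F^2\leq k^2\rho(C,\Gamma)$. Factoring out $\sigma^{-2\ell}$ (valid since $\sigma\geq 1$) and noting that $\sum_{k\geq\ell}k^2/k!$ converges to a finite constant, the numerator is at most $\calK'_\ell\,\sigma^{-2\ell}\rho(C,\Gamma)$.

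To handle the $1/f_\Gamma$ factor, observe that $f_\Gamma(x)$ is an average of exponentials over $t\in[0,1]$. Applying Jensen's inequality and using $\int_0^1\Gamma(t)\,dt=0$ (mean zero) together with $\|\Gamma(t)\|<1$ gives
\[f_\Gamma(x)\geq\exp\!\left(\frac{\langle x,\int_0^1\Gamma(t)\,dt\rangle}{\sigma^2}-\frac{\int_0^1\|\Gamma(t)\|^2\,dt}{2\sigma^2}\right)\geq e^{-1/(2\sigma^2)}\geq e^{-1/2}\]
uniformly in $x$ for $\sigma\geq 1$. Combining this with the numerator estimate then yields $\chi^2(P_{C,\sigma}\|P_{\Gamma,\sigma})\leq \calK_\ell\,\sigma^{-2\ell}\rho(C,\Gamma)$, as desired.

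The hard part is the uniform lower bound on $f_\Gamma$: without it, the $1/f_\Gamma$ factor can blow up for $x$ far from the support of the curve. The mean-zero hypothesis is essential here, since Jensen would otherwise produce a factor $e^{\langle x, m_1(\Gamma)\rangle/\sigma^2}$ that degrades with $\|x\|$. The hypothesis $\|C(t)-\Gamma(t)\|\leq 1/3$ does not appear strictly necessary for the route sketched above; I suspect it enters in a sharper version where one compares $f_\Gamma$ to $f_C$ directly (rather than to $\phi_\sigma$) or in a tighter tail analysis that replaces the crude bound $k\|C(t)-\Gamma(t)\|$ by a geometric-decay estimate.
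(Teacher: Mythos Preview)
Your argument is correct and follows the same skeleton as the paper: lower-bound the denominator density by $\phi_\sigma(x)e^{-1/(2\sigma^2)}$ via Jensen and the mean-zero assumption, use the Gaussian MGF identity to reduce the numerator to the double integral of $e^{\langle C(s),C(t)\rangle/\sigma^2}-2e^{\langle C(s),\Gamma(t)\rangle/\sigma^2}+e^{\langle \Gamma(s),\Gamma(t)\rangle/\sigma^2}$, Taylor-expand to obtain $\sum_{k\geq\ell}\|m_k(C)-m_k(\Gamma)\|_F^2/(k!\sigma^{2k})$, and then bound each moment difference.

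The one substantive difference is in that last step. The paper bounds $\|C(t)^{\otimes k}-\Gamma(t)^{\otimes k}\|_F^2\leq 12\cdot 2^k\|C(t)-\Gamma(t)\|^2$ via a binomial-expansion computation (its Lemma~\ref{lem:tensor-pwr-diff-bound}, adapted from \cite{mra-background}), and this is where the hypothesis $\|C(t)-\Gamma(t)\|\leq 1/3$ is used. Your telescoping identity gives the cleaner and strictly better bound $k\|C(t)-\Gamma(t)\|$ on the Frobenius norm, hence $\|m_k(C)-m_k(\Gamma)\|_F^2\leq k^2\rho(C,\Gamma)$, and does so without needing $\|C(t)-\Gamma(t)\|\leq 1/3$. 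So your suspicion is right: that hypothesis is an artifact of the particular pointwise bound the paper imports, and your route shows it can be dropped entirely (at the cost of nothing --- $\sum_{k\geq\ell}k^2/k!$ converges just as well as $\sum_{k\geq\ell}2^k/k!$).
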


This result allows us to bound the $\chi^2$ divergence between two noisy curve distributions by $\sigma^{-2\ell}$ if the moments of the underlying curves match up to (but not including) order $\ell$. This is the primary ingredient in the proof of the following main sample complexity result. Note that we use $\lesssim$ to denote inequality up to a universal constant.
\begin{thm}
\label{thm:main-thm}
Let $C$ be a curve as in be as in Proposition \ref{prop:main-chi2-bd}. Given any $\Delta>0$ sufficiently small, there exists another curve $\Gamma\neq C$ satisfying the hypotheses of Proposition \ref{prop:main-chi2-bd} such that $\rho(C,\Gamma) = \Delta$, where $\Gamma$ is indistinguishable from $C$ in the following sense. Consider $N$ samples $Y \defeq \{y^{(1)},\dots, y^{(N)}\}$ and the task of deciding if $Y$ came from $P_{C,\sigma}$ or $P_{\Gamma,\sigma}$. Assume a uniform prior on whether the ground truth curve is $C$ or $\Gamma$. Then if $N\lesssim \sigma^{6}\Delta^{-1}$, the probability of making an error is bounded below by $1/4$.
\end{thm}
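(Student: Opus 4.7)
The plan is a standard Le Cam two-point argument applied to the chi-squared bound already in hand. First I would extract from Corollary \ref{cor:matching-m1m2-curve} a one-parameter family $\{\Gamma_\epsilon\}_{\epsilon > 0}$ of curves satisfying $m_1(\Gamma_\epsilon) = m_1(C)$, $m_2(\Gamma_\epsilon) = m_2(C)$, and $\sup_{t \in [0,1]}\|\Gamma_\epsilon(t) - C(t)\| \leq \epsilon$. Since $\rho(C,\Gamma_\epsilon) \leq \epsilon^2 \to 0$ as $\epsilon \to 0$, a continuity / intermediate value argument applied to $\epsilon \mapsto \rho(C,\Gamma_\epsilon)$ lets me pick $\epsilon = \epsilon(\Delta)$ so that $\rho(C, \Gamma_{\epsilon}) = \Delta$ exactly, for every sufficiently small $\Delta$. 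Call the resulting curve $\Gamma$. For $\Delta$ small, the hypotheses of Proposition \ref{prop:main-chi2-bd} transfer automatically: $\Gamma$ is mean zero because $m_1(\Gamma) = m_1(C) = 0$; $\|\Gamma(t)\| < 1$ follows from $\|C(t)\| < 1$ together with the $\epsilon$-closeness; and $\|C(t) - \Gamma(t)\| \leq \epsilon < 1/3$ is immediate.

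With $C$ and $\Gamma$ agreeing on moments up through order $2$, applying Proposition \ref{prop:main-chi2-bd} with $\ell = 3$ yields the single-sample bound
\[
\chi^2\!\left(P_{C,\sigma}\,\|\,P_{\Gamma,\sigma}\right) \leq \mathcal{K}_3\,\sigma^{-6}\,\rho(C,\Gamma) \;=\; \mathcal{K}_3\,\sigma^{-6}\Delta.
\]
Because the $N$ observations are i.i.d., chi-squared tensorizes over product measures:
\[
1 + \chi^2\!\left(P_{C,\sigma}^{\otimes N}\,\|\,P_{\Gamma,\sigma}^{\otimes N}\right) = \left(1 + \chi^2(P_{C,\sigma}\|P_{\Gamma,\sigma})\right)^N \leq \left(1 + \mathcal{K}_3\sigma^{-6}\Delta\right)^N \leq \exp\!\left(N\mathcal{K}_3\sigma^{-6}\Delta\right).
\]

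To close, I would invoke the classical binary Le Cam identity: under a uniform prior on the two hypotheses, the Bayes probability of error of any test equals $\tfrac{1}{2}\bigl(1 - \mathrm{TV}(P_{C,\sigma}^{\otimes N}, P_{\Gamma,\sigma}^{\otimes N})\bigr)$, so $P_e \geq 1/4$ is equivalent to $\mathrm{TV} \leq 1/2$. The Cauchy--Schwarz inequality gives $\mathrm{TV} \leq \tfrac{1}{2}\sqrt{\chi^2}$, so it suffices to guarantee $\chi^2(P_{C,\sigma}^{\otimes N}\|P_{\Gamma,\sigma}^{\otimes N}) \leq 1$; by the previous display this is implied by $N\,\mathcal{K}_3\,\sigma^{-6}\Delta \leq \log 2$, that is, $N \lesssim \sigma^6\Delta^{-1}$ with an absolute implicit constant depending only on $\mathcal{K}_3$.

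The main obstacle I anticipate is the first step: Corollary \ref{cor:matching-m1m2-curve} only asserts \emph{existence} of a moment-matched $\Gamma$ at some diameter $\epsilon$, whereas the theorem requires $\rho(C,\Gamma) = \Delta$ \emph{exactly}. This should follow from the structure of the proof of Proposition \ref{prop:matching-m1m2} in Appendix \ref{sec:matching-m1m2-proof}, where the moment-matching solutions form a variety of positive dimension; parameterizing a smooth branch emanating from $C$ by a scalar $\epsilon$ makes $\rho(C,\Gamma_\epsilon)$ a continuous function vanishing at $\epsilon = 0$ and strictly positive nearby, and the intermediate value theorem then supplies the required $\epsilon(\Delta)$. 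Everything downstream is routine information theory: chi-squared tensorization for product measures, the Cauchy--Schwarz bound $2\mathrm{TV} \leq \sqrt{\chi^2}$, and the two-point Le Cam identity.
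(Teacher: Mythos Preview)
Your proposal is correct and follows the same Le Cam two-point strategy as the paper. The only substantive difference is in how the single-sample divergence bound is lifted to $N$ samples: the paper passes from $\chi^2$ to $\KL$ via $\KL\leq\chi^2$, then uses the additive tensorization $\KL(P^{\otimes N}\|Q^{\otimes N})=N\,\KL(P\|Q)$ and Pinsker's inequality to bound $\TV$; you instead tensorize $\chi^2$ multiplicatively via $(1+\chi^2)^N$ and apply the Cauchy--Schwarz bound $\TV\leq\tfrac12\sqrt{\chi^2}$ directly. Both routes are standard and yield the same $N\lesssim\sigma^6\Delta^{-1}$ threshold with only the implicit constant differing. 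Your treatment of the ``exact $\rho(C,\Gamma)=\Delta$'' issue is in fact more careful than the paper's, which simply cites Proposition~\ref{prop:matching-m1m2} without spelling out the intermediate-value step.
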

\begin{proof}
Given $C(t)$, construct $\Gamma(t)$ such that the first two moments match and $\rho(C,\Gamma) = \Delta$ as in Proposition \ref{prop:matching-m1m2}. Then by Theorem \ref{prop:main-chi2-bd}, we have $\chi^2(P_{C,\sigma}\|P_{\Gamma,\sigma})\leq \mathcal{K}_3\sigma^{-6}\rho(C,\Gamma)$.  By Lemma \ref{lem:KL-leq-chi2}, Lemma \ref{lem:pinsker}, and Lemma \ref{lem:KLofprod},
\begin{equation}
    \TV(P_{C,\sigma}^{\otimes N}, P_{\Gamma,\sigma}^{\otimes N})^2 \leq \frac{1}{2}\KL(P_{C,\sigma}^{\otimes N}\| P_{\Gamma,\sigma}^{\otimes N})\leq \frac{1}{2}\mathcal{K}_3\sigma^{-6}\Delta N.
\end{equation}
Therefore, for $N\leq \frac{1}{2}\mathcal{K}_3^{-1}\sigma^{6}\Delta^{-1}$, we have 
\begin{equation}
    \TV(P_{C,\sigma}^{\otimes N}, P_{\Gamma,\sigma}^{\otimes N}) \leq \frac{1}{2}.
\end{equation}
Now let $\psi:\bbR^{d\times N}\to\{C, \Gamma\}$ be any measurable function of the data $Y$. Let $A$ denote the set of all $Y$ such that $\psi(Y) = \Gamma$. Then 
\begin{align}
\begin{split}
    P^{\otimes N}_{C,\sigma}(\psi(Y) = \Gamma) + P^{\otimes N}_{\Gamma,\sigma}(\psi(Y) = C) &=  P^{\otimes N}_{C,\sigma}(A) + P^{\otimes N}_{\Gamma,\sigma}(A^c)\\
    &= 1 - (P^{\otimes N}_{\Gamma,\sigma}(A) - P^{\otimes N}_{C,\sigma}(A))\\
    &\geq 1 - \sup_A (P^{\otimes N}_{\Gamma,\sigma}(A) - P^{\otimes N}_{C,\sigma}(A))\\
    &= 1 - \TV(P^{\otimes N}_{\Gamma,\sigma}, P^{\otimes N}_{C,\sigma})\\
    &\geq 1/2.
\end{split}
\label{eqn:lecam-lb}
\end{align}
Let $X$ denote the event that the ground truth curve is $C$ and $X^c$ denote the event that the ground truth curve is $\Gamma$. By our uniform prior assumption, we have $\bbP(X) = \bbP(X^c) = 1/2$. Then the probability of the hypothesis test $\psi$ making an error is 
\begin{align}
    \begin{split}
    \bbP(\psi(Y)=\Gamma | X)\bbP(X) &+ \bbP(\psi(Y) = C | X^c)\bbP(X^c) \\
    &= \frac{1}{2}P^{\otimes N}_{C,\sigma}(\psi(Y) = \Gamma) + \frac{1}{2}P^{\otimes N}_{\Gamma,\sigma}(\psi(Y) = C)\\
    &\geq \frac{1}{4}.
    \end{split}
    \label{eqn:error-prob}
\end{align}
\end{proof}
This result reveals a fundamental limitation of the curve recovery problem in the high noise regime; regardless of the method used, it is impossible to discern with high probability which of two $\Delta$-separated curves the data $Y$ come from without at least $O(\sigma^6)$ samples.

\section{Local Well-Posedness of Third Moment-Based Recovery of Piecewise Linear Curves}
\label{sec:local-well-posed}
Theorem \ref{thm:main-thm} gives us a lower bound of $O(\sigma^6)$ on the sample complexity of recovering a curve from data. We now show that for PWL curves with constant speed, this lower bound is asymptotically tight by showing that $O(\sigma^6)$ is indeed enough samples for recovery. We accomplish this by showing that the third moment of a noise-free PWL curve uniquely determines a PWL curve in a neighborhood. This is contrast to the result of Proposition \ref{prop:matching-m1m2}, which says that determining a curve from the first two moments alone is locally ill-posed. As before, in this section we assume $M=d$.

Our ultimate goal in this section is to show the following.

\begin{thm}
Let $C^*\in\bbR^{(M+1)\times d}$ be a ground truth PWL curve with proportional segment lengths $p^*\in\bbR^{M}$, where $M=d\geq 4$. Let $m_3^* = \mu_3(C^*, p^*)$ be the ground truth third moment. Let $L_{m_3^*}$ be the third moment squared Frobenius loss as defined in (\ref{eqn:loss-defns}) Then for almost all $C^*$, if $C$ is sufficiently close to $C^*$ and $p$ is sufficiently close to $p^*$, we have
\begin{align}
    \inner{\grad_C L_{m_3^*}(C, p)}{C^*-C}&<0\label{eqn:strong-C-condition}\\
    \inner{\grad_p L_{m_3^*}(C, p)}{p^*-p}&<0\label{eqn:strong-p-condition}.
\end{align}
\label{thm:strong-gradient-condition}
\end{thm}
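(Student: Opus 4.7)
The plan is to reduce both gradient conditions to positive-definiteness of a quadratic form obtained by Taylor-expanding the third moment residual $T(C,p) := \mu_3(C,p) - m_3^*$ around the ground truth $(C^*, p^*)$ and substituting into the formulas from Lemma \ref{lem:loss-jacs}. Because $\mu_3$ is polynomial of degree three in $C$ and degree one in $p$, this expansion terminates, so the ``remainder'' is fully explicit and controllable by shrinking the neighborhood.

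First I would set up the linearization. Write $\Delta C := C - C^*$, $\Delta p := p - p^*$, and denote the two partial linearizations at the ground truth by $J_C[y] := (C^*\otimes C^*\otimes y + C^*\otimes y\otimes C^* + y\otimes C^*\otimes C^*)\star\alpha(p^*)$ and $J_p[z] := (C^*)^{\otimes 3}\star\alpha(z)$; then the finite Taylor expansion reads $T(C,p) = J_C[\Delta C] + J_p[\Delta p] + R(\Delta C,\Delta p)$ with $R$ of total order $\geq 2$. Substituting this (together with the expansion of the trilinear factor in (\ref{eqn:loss-jacs-applied})) into the $C$-gradient with direction $C^* - C = -\Delta C$ gives
$$\inner{\grad_C L_{m_3^*}(C,p)}{C^* - C} = -2\,\|J_C[\Delta C]\|_F^2 - 2\inner{J_p[\Delta p]}{J_C[\Delta C]} + O(\|\Delta\|^3),$$
and an analogous formula for the $p$-gradient with $J_C$ and $J_p$ interchanged. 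Strict negativity then reduces to controlling the cross term by the leading squared term and absorbing the cubic remainder into the neighborhood radius.

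The central step is to show that both $J_C$ and $J_p$ are injective for almost all $C^*$. Using the explicit combinatorial form of $\alpha(p^*)$ from Appendix \ref{sec:custom-notation}, each map admits a matrix representation whose entries are polynomials in the vertex coordinates $(c_i^*)$; injectivity is then the non-vanishing of a determinantal polynomial in these coordinates, so it holds off a measure-zero set. This delivers coercivity $\|J_C[\Delta C]\|_F \gtrsim \|\Delta C\|_F$ and $\|J_p[\Delta p]\|_F \gtrsim \|\Delta p\|$, and hence by Cauchy--Schwarz and Young's inequality the leading-order expression above is strictly negative on the regime where $\Delta C$ and $\Delta p$ are of comparable size. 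For (\ref{eqn:strong-p-condition}) I would additionally exploit that $L_{m_3^*}(C,\cdot)$ is a convex quadratic in $p$ (since $\mu_3$ is linear in $p$), so its minimizer $p^{**}(C)$ depends smoothly on $C$ with $p^{**}(C^*) = p^*$; re-centering the $p$-expansion around $p^{**}(C)$ rather than $p^*$ eliminates the cross term and turns (\ref{eqn:strong-p-condition}) into a direct consequence of convexity-in-$p$ plus coercivity of $J_p$.

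The hard part will be the algebraic injectivity claim for $J_C$. A dimension count --- domain of size $(M+1)d$ against codomain of size $\binom{d+2}{3}$ --- is very favorable once $d$ is moderately large but is tight near the lower end of the allowed range, so establishing generic full rank requires a careful examination of the coupling pattern in $\alpha(p^*)$ (which only links adjacent vertex pairs) to rule out accidental kernel elements supported on consecutive segments. The natural strategy is to exhibit a single tractable configuration $C_0$ (e.g.\ vertices in sufficiently general linear position) where injectivity can be verified by hand, and then invoke the standard fact that non-vanishing of a polynomial at a single point forces non-vanishing on a Zariski-open, full-measure set.
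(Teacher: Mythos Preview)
Your overall plan---linearize $\mu_3$ at the truth, reduce strict negativity to injectivity of the partial differentials $J_C,J_p$, and verify injectivity generically via a witness---matches the paper's strategy. The organizational difference is that the paper does \emph{not} Taylor-expand jointly in $(C,p)$. It first proves a \emph{weak} version (Lemma~\ref{lem:weak-gradient-condition}) in which one variable is frozen at the truth: $\langle\nabla_C L(C,p^*),C^*-C\rangle<0$ and $\langle\nabla_p L(C^*,p),p^*-p\rangle<0$. In each frozen problem there is no cross term, and the leading order is exactly $-2\|J_C[\Delta C]\|_F^2$ (resp.\ $-2\|J_p[\Delta p]\|_F^2$), so only injectivity is needed. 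The strong inequalities are then obtained by writing, e.g., $\langle\nabla_C L(C,p),C^*-C\rangle$ as the weak quantity plus explicit remainder terms each carrying a factor $\alpha(p-p^*)$, and invoking continuity as $p\to p^*$ (and symmetrically for the $p$-inequality).

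Your joint expansion forces you to confront the cross term $-2\langle J_p[\Delta p],J_C[\Delta C]\rangle$ head-on, and this is where your argument has a gap. As you yourself concede, Cauchy--Schwarz/Young only controls it when $\|\Delta C\|$ and $\|\Delta p\|$ are comparable; when $\|\Delta C\|\ll\|\Delta p\|$ the cross term (order $\|\Delta C\|\,\|\Delta p\|$) dominates $\|J_C[\Delta C]\|_F^2$ (order $\|\Delta C\|^2$) and the sign is undetermined. The theorem asks for a full neighborhood, not a cone. Your re-centering idea for the $p$-inequality is a nice observation (convexity in $p$ is correct), but it delivers $\langle\nabla_p L,\,p^{**}(C)-p\rangle<0$ rather than $\langle\nabla_p L,\,p^*-p\rangle<0$; the discrepancy $p^*-p^{**}(C)=O(\|\Delta C\|)$ reintroduces exactly the same cross-scale coupling. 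The paper's decoupling into weak-then-perturb is precisely what sidesteps having to bound this cross term by the squared term.

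On the injectivity of $J_C$, the paper does considerably more than a dimension count. It writes $J_C=\Sym\circ\tilde g$ for an explicit unsymmetrized map $\tilde g$, shows $\tilde g$ is injective by a block argument on the standard basis of $\bbR^{(M+1)\times d}$ (Lemma~\ref{lem:gtilde-trivial-kernel}, reducing to linear independence of a family $\{F^m\}$ of small matrices in Lemma~\ref{lem:FmGm-indep}), and then shows $\Imag(\tilde g)\cap\Ker(\Sym)=\{0\}$ generically by exhibiting a concrete witness curve $C^M$ for each $M\ge4$ and verifying linear independence of the resulting symmetrized tensors by induction on $M$ (Lemma~\ref{lem:kernels-have-trivial-intersection}). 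You correctly anticipate the witness-plus-Zariski-open mechanism, but because $\Sym$ has a large kernel the raw count $(M+1)d$ versus $\binom{d+2}{3}$ is misleading near $M=d=4$, and producing and verifying the witness is where most of the work in the paper actually lives.
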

Local well-posedness of recovery from $C$ is an immediate corollary of this.
\begin{cor}
\label{cor:m3-well-posed}
Let $C^*$, $p^*$, and $m_3^*$ as in Theorem \ref{thm:strong-gradient-condition}. Then for almost all $C^*$, there exists a neighborhood $U$ of $(C^*, p^*)$ such that $C^*$ is the only PWL curve in $U$ with third moment $m_3^*$. 
\end{cor}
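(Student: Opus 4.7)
The plan is to leverage Theorem \ref{thm:strong-gradient-condition} on the \emph{diagonal} of $(C,p)$-space where $p$ equals the vector $p_C$ of proportional segment lengths of $C$, because by Definition \ref{def:decoupled-curve-moments} the relaxed loss on this diagonal reduces to $L_{m_3^*}(C,p_C)=\|m_3(C)-m_3^*\|_F^2$. Any hypothetical competitor curve $C\ne C^*$ with the same third moment would therefore be a global minimizer of $L_{m_3^*}$ at $(C,p_C)$, forcing $\grad_C L_{m_3^*}(C,p_C)=0$, which is incompatible with the strict inequality \eqref{eqn:strong-C-condition}.

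In more detail, I would first take the neighborhoods $U_C\ni C^*$ and $U_p\ni p^*$ on which Theorem \ref{thm:strong-gradient-condition} guarantees both strict gradient inequalities. The map $C\mapsto p_C$ defined by $p_{C,i}=\|c_i-c_{i-1}\|/Z_C$ is continuous in a neighborhood of $C^*$ (well-defined there because vertices are assumed in generic position, so $Z_C\ne 0$) and satisfies $p_{C^*}=p^*$; by shrinking $U_C$ to a smaller $U_C'$ I may arrange that $C\in U_C'$ implies $p_C\in U_p$. Set $U=U_C'\times U_p$. Now suppose $C\in U_C'$ is a PWL curve with $m_3(C)=m_3^*$. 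Then $\mu_3(C,p_C)=m_3(C)=m_3^*$, so $L_{m_3^*}(C,p_C)=0$. Since $L_{m_3^*}\ge 0$ globally, $(C,p_C)$ is a global minimizer, hence $\grad_C L_{m_3^*}(C,p_C)=0$ and in particular
\begin{equation}
\inner{\grad_C L_{m_3^*}(C,p_C)}{C^*-C}=0.
\end{equation}
If $C\ne C^*$, this directly contradicts \eqref{eqn:strong-C-condition} applied at $(C,p_C)\in U$. Therefore $C=C^*$, proving uniqueness on $U$; the "almost all" qualifier is inherited verbatim from Theorem \ref{thm:strong-gradient-condition}.

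There is no genuinely difficult step; the substance lies in Theorem \ref{thm:strong-gradient-condition}. The only subtlety is bookkeeping between the relaxed problem (with $p$ a free variable) and the actual moment-matching problem (where $p_C$ is determined by $C$), which is handled by the continuity of $C\mapsto p_C$. Note that the $p$-gradient inequality \eqref{eqn:strong-p-condition} is not needed for this corollary, since once $C=C^*$ the equality $p_C=p^*$ is automatic; it will, however, be useful in \S\ref{sec:algo} when $p$ is treated as an independent optimization variable during reconstruction.
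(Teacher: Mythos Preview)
Your argument is correct and is essentially the paper's one-line proof (``Theorem \ref{thm:strong-gradient-condition} implies that $(C^*,p^*)$ is an isolated minimum of $L_{m_3^*}$'') unpacked with the necessary bookkeeping, in particular the continuity of $C\mapsto p_C$ needed to place $(C,p_C)$ inside the neighborhood furnished by the theorem. Your observation that only the $C$-gradient inequality \eqref{eqn:strong-C-condition} is actually needed---since two distinct PWL curves already have distinct vertex matrices $C$, so the case $C=C^*$, $p\neq p^*$ never arises on the diagonal---is a mild sharpening of the paper's argument, which tacitly uses both \eqref{eqn:strong-C-condition} and \eqref{eqn:strong-p-condition} to rule out all nearby critical points in the joint $(C,p)$-space.
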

\begin{proof}
The result of Theorem \ref{thm:strong-gradient-condition} implies that the ground truth $(C^*, p^*)$ is an isolated minimum of $L_{m_3^*}(C, p)$.
\end{proof}

To prove Theorem \ref{thm:strong-gradient-condition}, we will first need the following weaker result.
\begin{prop}
Let $C^*\in\bbR^{(M+1)\times d}$ be a ground truth PWL curve with proportional segment lengths $p^*\in\bbR^{M}$, where $M=d\geq 4$. Let $m_3^* = \mu_3(C^*, p^*)$ be the ground truth third moment. Then for almost all $C^*$, for $C$ sufficiently close to $C^*$ and $p$ sufficiently close to $p^*$, we have
\begin{align}
    \inner{\grad_C L_{m_3^*}(C, p^*)}{C^*-C}&<0 \label{eqn:weak-C-condition}\\
    \inner{\grad_p L_{m_3^*}(C^*, p)}{p^*-p}&<0 \label{eqn:weak-p-condition}.
\end{align}
\label{prop:weak-gradient-condition}
\end{prop}
Equation (\ref{eqn:weak-C-condition}) differs from (\ref{eqn:strong-C-condition}) in that we have replaced the $p$ in $L_{m_3^*}(C, p)$ with $p^*$, and similarly for (\ref{eqn:weak-p-condition}) and (\ref{eqn:strong-p-condition}). In other words, (\ref{eqn:weak-C-condition}) says that recovery of $C$ from $m_3^*$ and $p^*$ is locally well-posed, while (\ref{eqn:weak-p-condition}) says that recovery of $p$ from $m_3^*$ and $C^*$ is locally well-posed. On the other hand, the stronger statements (\ref{eqn:strong-C-condition}) and (\ref{eqn:strong-p-condition}) say that recovery of $C$ and $p$ from $m_3^*$ is locally well-posed. The proof of Proposition \ref{prop:weak-gradient-condition} is left to \S\ref{sec:weak-gradient-condition-proofs}. We now prove Theorem \ref{thm:strong-gradient-condition}.

\begin{proof}[Proof of Theorem \ref{thm:strong-gradient-condition}]
We first prove (\ref{eqn:strong-C-condition}), which we repeat here; we wish to show that for $C$ sufficiently close to $C^*$ and $p$ sufficiently close to $p^*$, we have
\[\inner{\grad_C L_{m_3^*}(C, p)}{C^*-C}<0.\]
For notational convenience, let us introduce the shorthand
\begin{equation}
    \llbracket C, C^*\rrbracket\defeq\left[C\otimes C\otimes C^* + C\otimes C^* \otimes C + C^*\otimes C\otimes C - 3C^{\otimes 3}\right].
\label{eqn:triple-C-shorthand}
\end{equation}
We wish to write the expression in (\ref{eqn:strong-C-condition}) in terms of (\ref{eqn:weak-C-condition}) plus an error term. Using (\ref{eqn:loss-jacs-applied}) and the fact $m_3^* = {C^*}^{\otimes 3}\star\alpha(p^*)$ to compute the difference and regrouping terms, we get
\begin{align}
\begin{split}
\inner{\grad_C L_{m_3^*}(C, p)}{C^*-C} &= \inner{\grad_C L_{m_3^*}(C, p^*)}{C^*-C}\\
&\hspace{2em} + 2\inner{(C^{\otimes 3} - {C^*}^{\otimes 3})\star\alpha(p^*)}{\llbracket C, C^*\rrbracket\star\alpha(p - p^*)}\\
&\hspace{2em} + 2\inner{C^{\otimes 3}\star\alpha(p - p^*)}{\llbracket C, C^*\rrbracket\star\alpha(p^*)}\\
&\hspace{2em} + 2\inner{C^{\otimes 3}\star\alpha(p - p^*)}{\llbracket C, C^*\rrbracket\star\alpha(p - p^*)}.
\end{split}
\end{align}
The first term on the right hand side above is precisely the term in (\ref{eqn:weak-C-condition}), which we showed was strictly negative for $C$ close to $C^*$ in \S\ref{sec:C-from-pstar}. By linearity of $\alpha$, the remaining three terms on the right hand side go to zero as $p\to p^*$, and hence can be made arbitrarily small for $p$ close to $p^*$ by continuity. Hence, for $(C, p)$ sufficiently close to $(C^*, p^*)$, the entire right hand side is strictly negative, and we have (\ref{eqn:strong-C-condition}); we denote the neighborhood around $(C^*, p^*)$ for which (\ref{eqn:strong-C-condition}) holds with $U_C$.

Now we prove (\ref{eqn:strong-p-condition}), which we also repeat; we wish to show that for $C$ sufficiently close to $C^*$ and $p$ sufficiently close to $p^*$, we have
\[\inner{\grad_p L_{m_3^*}(C, p)}{p^*-p}<0.\]
Similarly to before, now we wish to write the expression in (\ref{eqn:strong-p-condition}) in terms of (\ref{eqn:weak-C-condition}) plus an error term. Again using (\ref{eqn:loss-jacs-applied}) and regrouping terms, we get
\begin{align}
\begin{split}
\inner{\grad_p L_{m_3^*}(C, p)}{p^*-p} &= \inner{\grad_p L_{m_3^*}(C^*, p)}{p^*-p} \\
&\hspace{1em} + 2\inner{(C^{\otimes 3} - {C^*}^{\otimes 3})\star\alpha(p)}{C^{\otimes 3}\star\alpha(p^*)}\\
&\hspace{1em} + 2\inner{(C^{\otimes 3} - {C^*}^{\otimes 3})\star\alpha(p^*)}{{C^*}^{\otimes 3}\star\alpha(p)}\\
&\hspace{1em} + 2\inner{({C^*}^{\otimes 3} - C^{\otimes 3})\star\alpha(p^*)}{{C^*}^{\otimes 3}\star\alpha(p^*)}\\
&\hspace{1em} + 2\inner{({C^*}^{\otimes 3} - C^{\otimes 3})\star\alpha(p)}{{C^*}^{\otimes 3}\star\alpha(p)}\\
&\hspace{1em} + 2\inner{({C^*}^{\otimes 3} - C^{\otimes 3})\star\alpha(p)}{C^{\otimes 3}\star\alpha(p)}\\
&\hspace{1em} + 2\inner{({C}^{\otimes 3} - {C^*}^{\otimes 3})\star\alpha(p)}{{C^*}^{\otimes 3}\star\alpha(p^*)}.
\end{split}
\end{align}
Once again, the first term on the right hand side is precisely the term in (\ref{eqn:weak-p-condition}), which we showed was strictly negative for all $p\neq p^*$ in \S\ref{sec:p-from-Cstar}. By continuity of the function $\cdot^{\otimes 3}$, the remaining six terms on the right hand side go to zero as $C\to C^*$, and hence can be made arbitrarily small for $C$ close to $C^*$. Therefore there exists a neighborhood $U_p$ of $(C^*, p^*)$ where (\ref{eqn:strong-p-condition}) holds. 

To complete the proof of Theorem \ref{thm:strong-gradient-condition}, let $U = U_C\cap U_p$; in this neighborhood, (\ref{eqn:strong-C-condition}) and (\ref{eqn:strong-p-condition}) hold. This completes the proof of Theorem \ref{thm:strong-gradient-condition}, and thus the proof of local well-posedness of third moment-based curve recovery for piecewise linear curves.
\end{proof}

\section{A Third Moment-Based Recovery Algorithm}
\label{sec:algo}
Now we turn our attention to devising an algorithm to recover PWL curves from data. We assume that we have noisy observations coming from (\ref{eqn:noisy-curve-model}) of a ground truth PWL curve $C^*$ in $\bbR^d$ with $M$ segments; we assume that $M$ is known. We furthermore assume that $d\geq M\geq 4$, that $C^*$ is mean zero, and that the points $c^*_0,\dots,c^*_M$ are in generic position.

Theorem \ref{thm:strong-gradient-condition} suggests that a reasonable approach is to first obtain a rough estimate $\hat{C}_{\init}$ with proportional segment lengths $\hat{p}_{\init}$ in a neighborhood near the ground truth $C^*$, then perform gradient descent in $C$ and $p$ on the relaxed third moment loss (\ref{eqn:loss-defns}), using $\hat{C}_{\init}$ and $\hat{p}_{\init}$ as initializations. We propose an algorithm that does exactly this, where the initial rough estimate is obtained via the tensor power method. Note however that applying Theorem \ref{thm:strong-gradient-condition} requires knowledge of the noise-free third moment of a curve. We therefore need a method of estimating noise-free moments $m_k(C)$ from data.

In  \S\ref{sec:unbiased-estimators}, we derive unbiased estimators of the noise-free curve moments from data. In  \S\ref{sec:tpm}, \S\ref{sec:reordering}, and \S\ref{sec:final-optim}, we describe our recovery algorithm; in \S\ref{sec:tpm}, we discuss how to use the tensor power method to estimate an unordered list of subspaces that the vertices of $C^*$ live on; in \S\ref{sec:reordering}, we discuss how to reorder these subspaces; in \S\ref{sec:final-optim}, we use these reordered subspaces to obtain initial estimates $\hat{C}_{\init}$ and $\hat{p}_{\init}$ as initializations for a gradient descent scheme.

Since our algorithm requires getting good initializations for gradient descent that lie in the basin of attraction $U$ of Corollary \ref{cor:m3-well-posed}, we briefly numerically investigate the size of this basin of attraction in \S\ref{sec:basin-of-attraction} of the supplementary material.

\subsection{Unbiased Estimators of Noise-Free Moments}
\label{sec:unbiased-estimators}
Our task is to recover a ground truth curve $C^*$ from observations $y^{(j)} = C^*(\tau^{(j)})  + \sigma\xi^{(j)}$, $j = 1,\dots, N$, where $\tau\sim U[0, 1]$ and $\xi\sim\mathcal{N}(0, I)$. We want a way of recovering the noise-free moments of the curve $m_k^* \defeq m_k(C^*)$ from data. With $Y \defeq \{y^{(1)},\dots,y^{(N)}\}$, we define the $k$th empirical moment of the data $Y$ to be 
\begin{equation}
\widehat{m}_k = \widehat{m}_k(Y) \defeq \frac{1}{N}\sum_{j=1}^N (y^{(j)})^{\otimes k}.
\label{eqn:emp-mom}  
\end{equation}
First we recall the first three uncentered moments of a multivariate Gaussian.
\begin{lem}(adapted from \cite{holmquist})
Let $X\sim \calN(\mu, \Sigma)$ be a multivariate normal. Then the first through third uncentered moments of $X$ are given by
\begin{align}
    \begin{split}
    m_1(X) &= \mu\\
    m_2(X) &= \mu^{\otimes 2} + \Sigma\\
    m_3(X) &= \Sym(\mu^{\otimes 3} + 3\mu\otimes\Sigma)
    \end{split}
    \label{eqn:normal-moments}
\end{align}
Here, we use $\mu\otimes\Sigma$ to denote the three-way tensor whose $jkl$ entry is $\mu_j\Sigma_{kl}$.
\label{lem:normal-moments}
\end{lem}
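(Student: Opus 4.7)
The plan is to reduce everything to entrywise index calculations, which keeps the tensor bookkeeping transparent. First I would decompose $X = \mu + Y$ with $Y \sim \mathcal{N}(0, \Sigma)$, so that the three moments can be expanded by multilinearity using only two facts about $Y$: that $E[Y_j] = 0$ (first moment of a centered Gaussian) and that $E[Y_j Y_k Y_l] = 0$ (all odd moments of a centered Gaussian vanish, since the density is invariant under $Y \mapsto -Y$, or equivalently by writing $Y = \Sigma^{1/2} Z$ with $Z$ standard normal and expanding the cube in $Z$).

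The first two moments are immediate: $E[X_j] = \mu_j$ gives $m_1(X) = \mu$, and $E[X_j X_k] = \mu_j \mu_k + E[Y_j Y_k] = \mu_j \mu_k + \Sigma_{jk}$ gives $m_2(X) = \mu^{\otimes 2} + \Sigma$. The main content is the third moment. Expanding $(\mu_j + Y_j)(\mu_k + Y_k)(\mu_l + Y_l)$ and discarding the linear-in-$Y$ and cubic-in-$Y$ terms by the two vanishing identities above yields
\begin{equation}
E[X_j X_k X_l] = \mu_j \mu_k \mu_l + \mu_j \Sigma_{kl} + \mu_k \Sigma_{jl} + \mu_l \Sigma_{jk}.
\end{equation}

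It remains to match this with the claimed $\Sym(\mu^{\otimes 3} + 3\mu \otimes \Sigma)$. Since $\mu^{\otimes 3}$ is already fully symmetric, $\Sym$ acts on it as the identity and recovers the $\mu_j \mu_k \mu_l$ term. For the cross term, using the definition of $\Sym$ from (\ref{eqn:sym-def}) together with the symmetry $\Sigma_{ab} = \Sigma_{ba}$, the six permutations of $(\mu \otimes \Sigma)_{jkl} = \mu_j \Sigma_{kl}$ collapse pairwise into $\tfrac{1}{3}(\mu_j \Sigma_{kl} + \mu_k \Sigma_{jl} + \mu_l \Sigma_{jk})$, and the factor of $3$ out front cancels the $1/3$ to give exactly the three cross terms computed above.

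There is no real obstacle here; the only subtlety worth flagging is the role of the factor $3$, which is forced precisely because each of the three placement patterns of $\mu$ against $\Sigma$ arises as the average of two of the six permutations in $\Sym$. The appeal to Holmquist's reference for the general $k$th moment formula is only needed to justify the assertion in full generality; for $k \leq 3$ the elementary expansion above is self-contained.
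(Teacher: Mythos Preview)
Your proof is correct. Note that the paper does not actually supply a proof of this lemma: it is stated with the parenthetical ``adapted from \cite{holmquist}'' and used as a black box. So there is no ``paper's own proof'' to compare against; your elementary entrywise expansion via $X = \mu + Y$ with $Y \sim \mathcal{N}(0,\Sigma)$, together with the vanishing of odd centered Gaussian moments, is exactly the kind of self-contained argument one would write in place of the citation. The verification that $\Sym(3\,\mu\otimes\Sigma)$ reproduces the three cross terms is also correct and cleanly explained.
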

From Lemma \ref{lem:normal-moments}, we can compute unbiased estimators for the noise-free moments.
\begin{lem}
\label{lem:unbiased-estimators}
Let $C$ be such that $m_1(C) = 0$ and let $y\sim C(\tau) + \sigma\xi$, where $\tau\sim U[0, 1]$ and $\xi\sim\mathcal{N}(0, I)$. Then
\begin{enumerate}
\item $y$ is an unbiased estimator of $m_1(C)$;
\item $y^{\otimes 2} - \sigma^2I_d$ is an unbiased estimator of $m_2(C)$;
\item $y^{\otimes 3}$ is an unbiased estimator of $m_3(C)$.
\end{enumerate}
\end{lem}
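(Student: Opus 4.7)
The plan is to condition on the random parameter $\tau$ and reduce each claim to an application of Lemma \ref{lem:normal-moments}, followed by averaging over $\tau\in[0,1]$. Concretely, given $\tau$, the conditional distribution of $y$ is $\calN(C(\tau),\sigma^2 I_d)$, so Lemma \ref{lem:normal-moments} gives closed-form expressions for $\bbE[y^{\otimes k}\mid \tau]$ in terms of $C(\tau)$ and $\sigma^2 I_d$. Taking an outer expectation over $\tau\sim U[0,1]$ and recognizing $\int_0^1 C(\tau)^{\otimes k}\,d\tau=m_k(C)$ yields each of the three claims, with the mean-zero hypothesis $m_1(C)=0$ playing the crucial role of killing the bias term that would otherwise appear at third order.

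In more detail, for (1) the tower property gives $\bbE[y]=\bbE[\bbE[y\mid\tau]]=\bbE[C(\tau)]=m_1(C)$. For (2), Lemma \ref{lem:normal-moments} yields $\bbE[y^{\otimes 2}\mid\tau]=C(\tau)^{\otimes 2}+\sigma^2 I_d$, so that $\bbE[y^{\otimes 2}]=m_2(C)+\sigma^2 I_d$, and subtracting the deterministic tensor $\sigma^2 I_d$ produces an unbiased estimator of $m_2(C)$. For (3), Lemma \ref{lem:normal-moments} yields
\begin{equation}
\bbE[y^{\otimes 3}\mid\tau]=\Sym\!\left(C(\tau)^{\otimes 3}+3\,C(\tau)\otimes\sigma^2 I_d\right).
\end{equation}
Taking expectations over $\tau$ and using linearity of $\Sym$, the first summand integrates to $\Sym(m_3(C))=m_3(C)$ (since $m_3(C)$ is already symmetric by construction, as seen from the explicit formula in Proposition \ref{prop:moments}), and the second summand integrates to $3\sigma^2\Sym(m_1(C)\otimes I_d)=0$ by the hypothesis $m_1(C)=0$. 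Hence $\bbE[y^{\otimes 3}]=m_3(C)$.

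There is no real obstacle here; the content of the lemma is entirely the bookkeeping of Isserlis-type Gaussian moment formulas combined with the tower property. The only point worth flagging is the necessity of the mean-zero assumption: without $m_1(C)=0$, the third moment estimator would need to be bias-corrected by the additional term $3\sigma^2\Sym(m_1(C)\otimes I_d)$, which in the data-driven setting of \S\ref{sec:algo} would require plugging in the first-moment estimator from (1) and tracking the resulting coupling between estimators. The hypothesis $m_1(C)=0$ sidesteps this entirely and justifies using the raw empirical third moment tensor $\widehat{m}_3$ in the downstream algorithm.
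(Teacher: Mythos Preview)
Your proof is correct and takes essentially the same approach as the paper: both compute $\bbE[y^{\otimes k}]$ by separating the contributions of $C(\tau)$ and $\sigma\xi$, then use the mean-zero hypothesis $m_1(C)=0$ to kill the would-be bias term at third order. The only difference is packaging---you condition on $\tau$ and invoke Lemma~\ref{lem:normal-moments} for the inner Gaussian expectation, whereas the paper expands $(C(\tau)+\sigma\xi)^{\otimes k}$ directly and uses independence of $\tau$ and $\xi$ to factor and eliminate cross terms---but the underlying computation is identical.
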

\begin{proof}
For the first moment:
\begin{align}
\begin{split}
    \bbE_{\tau\sim U[0, 1], \xi\sim\calN(0,I_d)}\left[y\right] &= \bbE_{\tau\sim U[0, 1]}[C(\tau)] + \bbE_{\xi\sim \calN(0, I_d)}[\sigma\xi]\\
    &= \bbE_{\tau\sim U[0, 1]}[C(\tau)]\\
    &= m_1(C).
\end{split}
\end{align}
For the second moment, eliminating cross terms by independence and the fact that $C$ and $\xi$ are mean zero, we can compute
\begin{align}
    \begin{split}
    \bbE&_{\tau\sim U[0, 1], \xi\sim\calN(0,I_d)}\left[y^{\otimes 2}\right] \\
    &= \bbE_{\tau}[C(\tau)^{\otimes 2}] + \bbE_{\tau, \xi}[C(\tau)\otimes \sigma\xi] + \bbE_{\tau, \xi}[\sigma\xi\otimes C(\tau)] + \bbE_{\xi}[\sigma\xi\otimes \sigma\xi]\\
    &= \bbE_{\tau}[C(\tau)^{\otimes 2}] + \bbE_{\xi}[\sigma\xi\otimes \sigma\xi]\\
    &= m_2(C) + \sigma^2I_d.
    \end{split}
\end{align}
For the third moment, eliminating terms with the same idea, we have
\begin{align}
    \begin{split}
    \bbE&_{\tau\sim U[0, 1],\xi\sim\calN(0,I_d)}\left[y^{\otimes 3}\right] \\
    &= \bbE_{\tau}[C(\tau)^{\otimes 3}] + \bbE_{\tau,\xi}[C(\tau)\otimes \sigma\xi \otimes \sigma\xi]+\bbE_{\tau,\xi}[C(\tau)\otimes C(\tau) \otimes \sigma\xi] \\
    &\hspace{2em}+\bbE_{\tau,\xi}[\sigma\xi\otimes C(\tau) \otimes C(\tau)] + \bbE_{\tau,\xi}[C(\tau)\otimes C(\tau) \otimes \sigma\xi] \\
    &\hspace{2em}+ \bbE_{\tau,\xi}[C(\tau)\otimes \sigma\xi \otimes C(\tau)] + \bbE_{\xi}[(\sigma\xi)^{\otimes 3}]\\
    &= \bbE_{\tau}[C(\tau)^{\otimes 3}]  + \bbE_{\xi}[(\sigma\xi)^{\otimes 3}]\\
    &= m_3(C).
    \end{split}
\end{align}
\end{proof}
Lemma \ref{lem:unbiased-estimators} and the law of large numbers imply that the empirical moments $\widehat{m}_1$, $\widehat{m}_2-\sigma_2 I$, and $\widehat{m}_3$ can estimate the noise-free moments $m_1^*$, $m_2^*$, and $m_3^*$ arbitrarily well (in probability) with a sufficient number of points as long as the noise level $\sigma$ is known. Therefore, we will design our algorithm for curve recovery from data assuming that we have estimates $\widehat{m}_1$, $\widehat{m}_2$, and $\widehat{m}_3$ from (\ref{eqn:emp-mom}) that are arbitrarily close to the ground truth moments $m_1^*$, $m_2^*$, and $m_3^*$ of $C^*$.

Note that for $y$ coming from a noisy curve with noise level $\sigma$, the entries of $y^{\otimes k}$ have variance on the order of $\sigma^{2k}$, so by Chebyshev's inequality, the number of points needed to estimate the first three moments is order $\sigma^6$. A third moment-based recovery algorithm thus shows that the sample complexity lower bound in Theorem \ref{thm:main-thm} is asymptotically tight.

\subsection{Recovering the Subspaces of $C^*$ with the Tensor Power Method}
\label{sec:tpm}
Let $T^3$ be a three-way tensor of the form 
\begin{equation}
T^3 = \sum_{i=1}^r \lambda_i u_i^{\otimes 3},
\label{eqn:odeco-tensor}
\end{equation}
where the vectors $u_i$ are orthonormal and the scalars $\lambda_i$ are all positive; such tensors are called \textit{orthogonally decomposable}. The tensor power method (TPM) is a robust iterative algorithm that is able to recover $\{\lambda_i, u_i\}_{i=1}^r$, up to some permutation of the indices. For $u_i$ that are unit length and independent (but not necessarily orthonormal), a whitening preprocessing step can be applied to reduce the problem to orthogonally decomposable case through an invertible linear transformation. A thorough introduction to the tensor power method as well as robustness guarantees can be found in \cite{anandkumar}; we will take the TPM for granted in this paper. We summarize the TPM with whitening as presented in \cite{anandkumar} in Algorithm \ref{alg:TPM}. Note that for a $(d, d, d)$-shaped tensor $T^3$ and $(d, r)$ matrix $W$, we use $T^3(W, W, W)$ to denote the tensor contraction of the three indices of $T^3$ with the first index of $W$, i.e., 
\begin{equation}
[T^3(W, W, W)]_{mno} = \sum_{jkl}T^3_{jkl}W_{jm}W_{kn}W_{lo}.
\label{eqn:t3-contraction} 
\end{equation}

\begin{algorithm}[h]
\caption{Tensor Power Method with Whitening: $\mathtt{TPM}$}\label{alg:TPM}
\label{alg:buildtree}
\begin{algorithmic}
\REQUIRE{Symmetric three-way tensor $T^3 = \sum_{i = 1}^r \lambda_i u_i^{\otimes 3}$ with $u_i\in\bbR^d$ unit length and linearly independent and $\lambda_i>0$; rank $r$ symmetric matrix $T^2 = \sum_{i = 1}^r \lambda_i u_i^{\otimes 2}$; desired number of additive factors $r \leq d$.}
\ENSURE{Estimates $\{\hat{u}_i\}_{i=1}^r$ of $\{u_i\}_{i=1}^r$.}

\COMMENT{\texttt{WHITENING STEP}}
\STATE{Set $U$ to be the $\bbR^{d\times r}$-shaped matrix whose columns are orthonormal eigenvectors of $T_2$ corresponding to nonzero eigenvectors.}
\STATE{Set $D$ to be the $\bbR^{r\times r}$-shaped diagonal matrix of positive eigenvalues of $T_2$.}
\STATE{Compute the whitening matrix $W=UD^{-1/2}\in\bbR^{d\times r}$.}
Whiten the $T^3$ to an orthogonally decomposable tensor by setting $\tilde{T}^3 = T^3(W, W, W)\in\bbR^{r\times r\times r}$.\\
\COMMENT{\texttt{TENSOR POWER ITERATION}}
\FOR{$i = 1, \dots, r$}
    \STATE{Randomly initialize $x\in\bbR^d$.}
    \WHILE{not converged}
    \STATE{Iterate $x\gets \sum_{jk}\tilde{T}^3_{jkl}x_kx_l / \|\sum_{jk}\tilde{T}^3_{jkl}x_kx_l\|$.}
    \ENDWHILE
    \STATE{Set $\tilde{\lambda}_i = \sum_{jkl}\tilde{T}^3_{jkl}x_jx_kx_l$.}
    \STATE{Set $\tilde{u}_i = x$.}
    \STATE{Set $\hat{u}_i =\tilde{\lambda}_i(W^T)^{\dagger}\tilde{u}_i$, where $\dagger$ denotes the Moore-Penrose pseudoinverse.}
    \STATE{Deflate the whitened tensor: $\tilde{T}^3 \gets \tilde{T}^3 - \tilde{\lambda}_i\tilde{u}_i^{\otimes 3}$.}
\ENDFOR
\end{algorithmic}
\end{algorithm}

We use the TPM in our algorithm in the following way. Recall from (\ref{eqn:clean-curve-moments}) that the third moment tensor of a ground truth PWL curve $C^*$ is given by 
\begin{align}
\begin{split}
    m_3^* &= \frac{1}{12}\sum_{i=1}^M\frac{\|c^*_i - c^*_{i-1}\|}{Z}\left((c^*_{i-1}+c^*_i)^{\otimes 3} + 2{c^*_{i-1}}^{\otimes 3} + 2{c^*_i}^{\otimes 3}\right)\\
    &= \frac{1}{12}\sum_{i=1}^M\frac{\|c^*_i - c^*_{i-1}\|}{Z}\Bigg(3c_{i-1}^{\otimes 3} + 3c_i^{\otimes 3} + c_{i-1}\otimes c_i \otimes c_i + c_i\otimes c_{i-1}\otimes c_i + c_i \otimes c_i \otimes c_{i-1} \\
    &\hspace{11em} + c_i\otimes c_{i-1}\otimes c_{i-1} + c_{i-1}\otimes c_i\otimes c_{i-1} + c_{i-1}\otimes c_{i-1}\otimes c_i\Bigg).
\end{split}
\label{eqn:m3-tpm}
\end{align}

Since the coefficients on the $c_i^{\otimes 3}$ term are larger than the coefficients on the cross terms, we heuristically assume that the third moment tensor is approximately equal to a tensor of the form
\begin{equation}
\sum_{i=0}^M \lambda_i{c^*_i}^{\otimes 3}.
\end{equation}
We conjecture that discarding the cross terms in (\ref{eqn:m3-tpm}) does not affect the result of appyling the TPM too much, i.e., the TPM with whitening applied to $\widehat{m}_3$ will recover unit vectors $\{\tilde{c}_j\}_{j=0}^M$, such that after reordering, $\tilde{c}_{j_i}$ approximately spans the one-dimensional subspace that $c^*_i$ lives in, that is, $\tilde{c}_{j_i}$ is close to $c^*_i$ in cosine similarity.\footnote{The \textit{cosine similarity} of two vectors $a$ and $b$ is $\inner{a}{b}/(\|a\|\|b\|)$.} We provide numerical evidence of this claim in \S\ref{sec:tpm-cross-terms-ok} of the supplementary material. Throughout this subsection, we will use $\tilde{c}_j$ to refer to the unsorted unit vectors and $\tilde{c}_{j_i}$ to refer to the sorted unit vectors.

Note that (\ref{eqn:odeco-tensor}) is invariant to a reordering of the $\lambda_i$'s and $u_i$'s, so the fact that the TPM recovers $u_i$ up to permutation is not important to the tensor decomposition problem. However, to recover the vertices of a curve, we do in fact need to recover the ordering of the $c_i^*$'s. Therefore, we need some way to reorder the putative subspaces returned by the TPM.

\subsection{Reordering the Predicted Subspaces}
\label{sec:reordering}
The tensor power moment returns unit vectors $\{\tilde{c}_j\}_{j=0}^M$ that are ostensibly close in alignment to the ground truth points $c^*_i$; we need to reorder them so that $\tilde{c}_{j_i}$ is close to $c^*_i$, where we use the term ``close'' here (and in the rest of this subsection) in the sense of cosine similarity. The seriation problem of ordering time dependent data is well-studied and there are many existing approaches (see, for instance, \cite{yuehaw-seriation}, \cite{spectral-seriation-2}), but here we outline a heuristic approach that empirically performs well for our purposes.

A reasonable prior is that our curve is sufficiently smooth such that if $i\neq 0, M$, then the two points closest to $c^*_i$ (in cosine similarity) are $c^*_{i-1}$ and $c^*_{i+1}$. This is the inspiration behind Algorithm \ref{alg:chain-building}, where we order the subspaces (given an initial choice) by iteratively choosing the closest subspace from $\{\tilde{c}_j\}_{j=0}^M$ that has not been sorted yet.

\begin{algorithm}[h]
\caption{Chain-Building Algorithm: $\mathtt{ChainBuild}$}\label{alg:chain-building}
\begin{algorithmic}
\REQUIRE{Unordered approximate subspaces $\{\tilde{c}_{j}\}_{j=0}^M$; initial index $0\leq j_0\leq M$.}
\ENSURE{Putative ordering of the approximate subspaces $[j_0,\dots j_M]$.}
\STATE{Initialize the chain with the initial index: $\mathtt{chain} \gets [j_0]$.}
\STATE{Remove the initial index from the list of indices of unsorted subspaces: $\mathtt{RemainingSubspaces} \gets [0, \dots, M]\setminus j_0$.}
\FOR{$k = 1,\dots, M$}
    \STATE{Set $j_k$ to be the index in $\mathtt{RemainingSubspaces}$ such that $\tilde{c}_j$ is closest to $\tilde{C}_{j_{k-1}}$ in cosine similarity.}
    \STATE{Append $j_k$ to the chain: $\mathtt{chain} \gets [j_0,\dots, j_{k-1}, j_k]$.}
    \STATE{Remove $j_k$ from the list of unsorted subspaces: $\mathtt{RemainingSubspaces}\gets \mathtt{RemainingSubspaces}\setminus j_k$.}
\ENDFOR
\end{algorithmic}
\end{algorithm}

Algorithm \ref{alg:chain-building} can output up to $M+1$ different orderings depending on the choice of initial index $j_0$. To see which ordering to choose, consider the simple case illustrated in Figure \ref{fig:ordering-cartoon}. Here we assume that we have five unit vectors along some arc. Then the five possible orderings output by Algorithm \ref{alg:chain-building}, depending on the choice of initial index, are
\begin{enumerate}[(1)]
\item $c_0, c_1, c_2, c_3, c_4$
\item $c_1,c_0, c_2, c_3, c_4$
\item $c_2,c_3,c_4, c_1, c_0$
\item $c_3,c_2,c_1, c_0, c_4$
\item $c_4, c_3, c_2, c_1, c_0$.
\end{enumerate}

\begin{figure}[h]
\includegraphics[width = 0.5\textwidth]{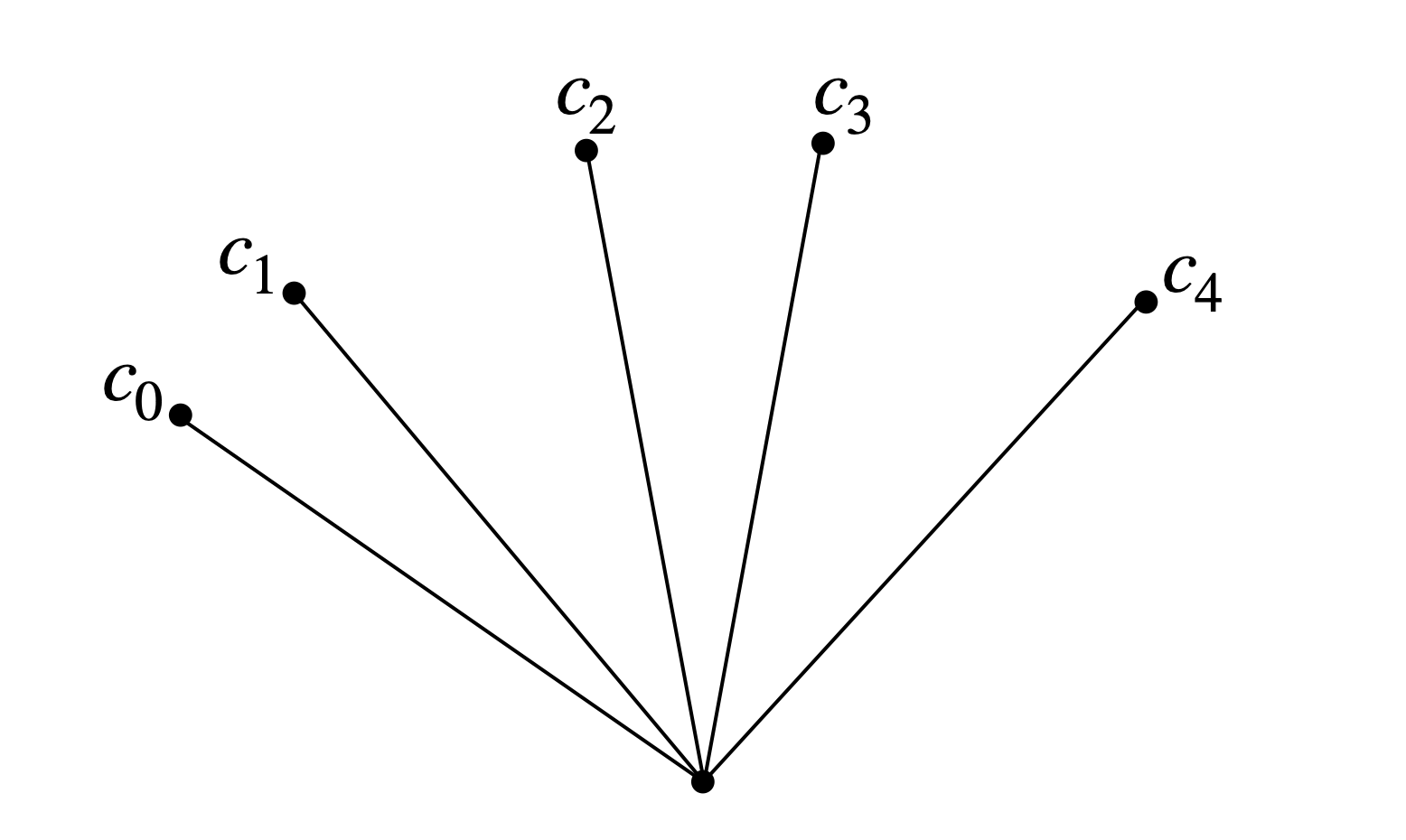}
\centering
\caption{An illustration of a toy ideal case for Algorithm \ref{alg:chain-building}.}
\label{fig:ordering-cartoon}
\end{figure}
Notice that the two correct sortings are (1) and (5), and that these two sortings are palindromes of another. A reasonable heuristic then is to assume that $C^*$ does not deviate too much from this ideal situation and $\{\tilde{c}_j\}$ do not deviate too much from the true subspaces, build up the $M+1$ chains starting from each of the predicted subspaces, and choose the chain that has a palindrome. 

Of course, in general we are not guaranteed that a palindrome pair will exist among the $M+1$ sortings generated by Algorithm \ref{alg:chain-building}, so we instead need a way to measure how ``palindromic'' a particular sorting is given a list of sortings.
\begin{defn}
Let $\Pi = \{\pi_i\}$ be a set of permutations of $M+1$ elements, where we identify permutations with ordered tuples of length $M+1$ with exactly one of $\{0,\dots M\}$ in each entry. Let $\Rev(\pi)$ be the reversed tuple of $\pi$. Given $\pi_1, \pi_2\in\Pi$, we define the palindrome similarity $\psim(\pi_1,\pi_2)$ between them as the number of entries in which $\pi_1$ and $\Rev(\pi_2)$ agree. For example, $\psim((0, 1, 2, 3, 4), (2, 3, 4, 1, 0)) = 3$. The \textit{palindromicity} $\mathscr{P}_{\Pi}(\pi)$ of a permutation $\pi\in\Pi$ is defined as the maximum palindrome similarity between $\pi$ and every other permutation in $\Pi$:
\begin{equation}
\scrP_{\Pi}(\pi) = \max_{\nu\in\Pi}\psim(\pi, \nu).
\label{eqn:scrP}
\end{equation}
\end{defn} 
Note that if the palindromicity of a permutation in a list is equal to $M+1$, that means the palindrome of the permutation exists in the list. From our list of $M+1$ orderings generated by the chain-building algorithm, we then choose the one with the highest palindromicity, breaking ties arbitrarily. In the ideal case, there should be exactly one pair of orderings with palindromicity equal to $M+1$.

\subsection{Final Optimization Steps}
\label{sec:final-optim}
From the output of Algorithm 1 sorted by Algorithm 2, we now have a row-stacked matrix $\tilde{C} = [\tilde{c}_{j_0},\dots\tilde{c}_{j_M}]^T$ of unit vectors that approximately span the subspaces in which $c_i^*$ lie. Since $\tilde{c}_{j_i}$ are unit length, we now wish to find scalars $\lambda_i$ such that the curve with vertices $\lambda_i\tilde{c}_{j_i}$ has moments close to the estimated moments $\widehat{m}_1, \widehat{m}_2, \widehat{m}_3$.  Observe that $\diag(\lambda)\tilde{C}$ is the matrix whose $i$th row is given by $\lambda_i\tilde{c}_{j_i}$. Then we choose $\lambda$ to minimize the loss of the first three moments of $\diag(\lambda)\tilde{C}$:
\begin{equation}
\hat{\lambda} = \argmin_\lambda \left[\sum_{k = 1}^3 \left\|m_k(\diag(\lambda)\tilde{C}) - \widehat{m}_k\right\|_F^2\right]
\label{eqn:m123-loss}
\end{equation}
If the $\tilde{c}_{j_i}$ are close in alignment to $c^*_i$, then $\diag(\hat{\lambda})\tilde{C} \eqdef \hat{C}_{\textrm{init}}$ should be close to the ground truth $C^*$. If this is close enough, then it will be within the neighborhood of $C^*$ where the third moment minimizer is unique (the existence of such a neighborhood is precisely the statement of Corollary \ref{cor:m3-well-posed}). We call this first step the initial estimate phase.

With $\hat{\lambda}$ determined, we now use $\hat{C}_{\textrm{init}}$ as an initialization for gradient descent on $L_{m_3^*}(C, p)$ as defined in (\ref{eqn:loss-defns}). In practice, as mentioned in \S\ref{sec:pwl-curve-moments}, since $C^*$ naturally lives in an $M$-dimensional subspace $V$ of $\bbR^d$, we first project $\hat{C}_{\textrm{init}}$ down to the subspace spanned by the top $M$ eigenvectors of $\widehat{m}_2$; let $Q$ be the $(d, M)$-shaped matrix whose columns are these eigenvectors. Note that if we replace the estimate $\widehat{m}_2$ with the exact second moment $m_2^*$ of $C^*$, then $m_2^*$ has exactly $M$ nonzero eigenvectors. Then the projection of $\hat{C}_{\textrm{init}}$ onto $V$ can be estimated by $\hat{C}_{\textrm{init}}Q$ (the estimation is exact when $Q$ is computed from $m_2^*$ rather than $\widehat{m}_2$).

Since we wish to match the third moment in the subspace $V$, we need to estimate the third moment of the projected curve. The exact third moment of the projected curve is given by $m_3^*(Q, Q, Q)$ (see (\ref{eqn:t3-contraction})), and hence we can estimate the projected third moment from data by $\widehat{m}_3(Q, Q, Q)$. We then use $\hat{C}_{\textrm{init}}Q$ and its corresponding proportional segment lengths $\hat{p}_{\textrm{init}}$ as an initial point for minimizing $L_{\widehat{m}_3(Q, Q, Q)}(C, p)$ via alternating gradient descent, then deproject the result back to $\bbR^d$ to get our final prediction $\hat{C}$. We call this second step consisting of projection, alternating descent, and deprojection the \textit{finetuning phase}.

To summarize, our recovery algorithm is in two phases. The first initial estimate phase uses the TPM on the third moment tensor to approximately recover the unordered subspaces in which the points $c_i$ approximately lie; these points are then sorted using the chain-building algorithm, and the sorting with the best palindromicity is chosen. We then find the best $\hat{c}_i$ along these subspaces that minimize the loss of the first three moments. In the second finetuning phase, we use these $\hat{c}_i^*$ as an initial guess for minimizing the loss of the relaxed third moment in the $M$-dimensional subspace where $C^*$ naturally lies. The full algorithm is presented as Algorithm \ref{alg:full-alg}.

\begin{algorithm}
\caption{Full Curve Recovery Algorithm}\label{alg:full-alg}
\begin{algorithmic}
\REQUIRE{Number of segments $M$; estimated moments $\widehat{m}_1, \widehat{m}_2, \widehat{m}_3$ of a PWL curve in $\bbR^d$ where $d\geq M$.}
\ENSURE{Vertices of predicted PWL curve $\hat{C}$ in $\bbR^d$ with $M$ segments.}
\COMMENT{\texttt{FIRST PHASE, INITIAL ESTIMATE}}
\STATE{Use the tensor power method to estimate the unordered subspaces in which the vertices of $C^*$: $\tilde{C}_{\textrm{unsorted}} \defeq \{\tilde{c}_0,\dots,\tilde{c}_M\} = \texttt{TPM}(\widehat{m}_3, \widehat{m}_2, M)$.}
\STATE{Build all possible putative sortings of the unsorted subspaces: $\Pi = \{\mathtt{ChainBuild}(\{\tilde{c}_{j}\}_{j=0}^M, i)\}_{i=0}^M$.}
\STATE{Choose the most palindromic sorting in $\Pi$: $\hat{\pi} = \argmax_{\pi}\mathscr{P}_{\Pi}(\pi)$ (see (\ref{eqn:scrP})); break ties arbitrarily.}
\STATE{Sort $\tilde{C}_{\textrm{unsorted}}$ using $\hat{\pi}$: $\tilde{C} = [\tilde{c}_{\hat{\pi}(0)},\dots, \tilde{c}_{\hat{\pi}(M)}]$.}
\STATE{Find $\hat{\lambda}\in\bbR^{M+1}$ that minimizes (\ref{eqn:m123-loss}) via gradient descent: $\hat{\lambda} = \argmin_\lambda \left[\sum_{k = 1}^3 \left\|m_k(\diag(\lambda)\tilde{C}) - \widehat{m}_k\right\|_F^2\right]$.}
\STATE{Compute the initial guess for the curve vertices along the subspaces spanned by $\{\tilde{c}_i\}$: $\hat{C}_{\textrm{init}} = \diag(\hat{\lambda})\tilde{C}$.}
\COMMENT{\texttt{SECOND PHASE, FINETUNING}}
\STATE{Set $Q$ to be the $(d, M)$-shaped matrix whose columns are the eigenvectors corresponding to the top $M$ eigenvalues of $\widehat{m}_2$.}
\STATE{Project the initial projection to the subspaces spanned by the columns of $Q$: $\hat{C}_{\proj\textrm{init}} = \hat{C}_{\textrm{init}}Q$.}
\STATE{Set $\hat{p}_{\proj\textrm{init}}$ as the proportional segment lengths of $\hat{C}_{\proj\textrm{init}}$.}
\STATE{Compute the third moment of the projected curve: $(\widehat{m}_3)_{\proj} = \widehat{m}_3(Q, Q, Q)$.}
\STATE{Solve $(\hat{C}_{\proj}, \hat{p}_{\proj}) = \argmin_{C, p} L_{(\widehat{m}_3)_{\proj}}(C, p)$ via alternating gradient descent on $C$ and $p$ with $(\hat{C}_{\proj\textrm{init}}, \hat{p}_{\proj\textrm{init}})$ as initialization.}
\STATE{Deproject the final projected prediction to the ambient space: $\hat{C} = \hat{C}_{\proj}Q^T$}
\end{algorithmic}
\end{algorithm}

\section{Numerical Results}
\label{sec:numerical-results}
Here we present some numerical results on using Algorithm \ref{alg:full-alg} to recover curves. Our implementation is primarily built with the the libraries JAX and JAXopt (\cite{jax}, \cite{jaxopt}) and available on \href{https://github.com/PhillipLo/curve-recovery-public}{GitHub}.\footnote{\url{https://github.com/PhillipLo/curve-recovery-public}}

To generate our data, we first prescribe $M$ segment lengths $Z_i$ uniformly randomly from the interval $[1, 2]$. Then we sample a random curve in $\bbR^d$ with $M$ segments by choosing the origin as the initial point, then iteratively adding points by choosing a random vector on the sphere with radius $L_i$ and adding it to the most recently generated point on the curve. We then subtract away the first moment of the curve to ensure that our curve is mean zero. We do not normalize the segment lengths to fit the curve into the unit ball (as in the hypotheses for Proposition \ref{prop:main-chi2-bd}) for numerical stability reasons.

Recall Algorithm \ref{alg:full-alg} relies on having access to the noise-free moments of the ground truth curve $C^*$. For high noise, a very large number (order $\sigma^6$) of data points from a noisy cloud are required to closely estimate the first three moments of the noise-free underlying curve. In high dimensions, this becomes enormously computationally expensive. Therefore, we show results from two different experiments: one where we apply Algorithm \ref{alg:full-alg} directly on ground truth noise-free moments that we obtain from $C^*$ (i.e., we set $\widehat{m}_k = m_k^*$), and another experiment that uses empirical moments computed from a point cloud generated from $C^*$ (i.e. we estimate $\widehat{m}_k$ from data). Due to the large number of points needed to accurately estimate the third moment tensor, we compute the moments in an online manner so that we never have to store an entire point cloud in memory. We discuss more about the number of samples needed to estimate the third moment tensor in \S\ref{sec:third-mom-estimation}.

Algorithm \ref{alg:full-alg} involves an initial phase that estimates a curve close to the ground truth before a second phase that directly minimizes the relaxed third moment loss. We propose two similar baseline algorithms to show that our initial rough estimation phase is essential: we simply directly minimize either the squared Frobenius loss of the third moment or the sum of the losses of the first, second and third moments from multiple random initializations, as detailed in Algorithm \ref{alg:baseline-alg}. We perform the optimization in an estimation of the natural $M$-dimensional subspace $V$ where the curve lies, so we minimize against the moments of the projected curve. As before, let $Q$ be the $(d, M)$-shaped matrix whose columns are the top $M$ eigenvectors of $\widehat{m}_2$; when $\widehat{m}_2$ equals $m_2^*$ exactly, the columns of $Q$ exactly span $V$. Then the first three moments of the projected curve can be estimated by $(\widehat{m}_1)_{\proj} = Q^T\widehat{m}_1$, $(\widehat{m}_2)_{\proj} = Q^T\widehat{m}_2Q$, and $(\widehat{m}_3)_{\proj} = \widehat{m}_3(Q, Q, Q)$. As in Algorithm \ref{alg:full-alg}, the minimizations are performed with gradient descent. We perform the minimization with $n = 10$ random initializations and choose the result with the best final moment loss.

\begin{algorithm}
\caption{Baseline Recovery Algorithm with Random Initialization for Moment Matching}\label{alg:baseline-alg}
\begin{algorithmic}
    
\REQUIRE{Number of segments $M$; estimated moments $\widehat{m}_1, \widehat{m}_2, \widehat{m}_3$ of a PWL curve in $\bbR^d$ where $d\geq M$, number of random baseline initializations $n$.}
\ENSURE{Vertices of predicted PWL curve $\hat{C}$ in $\bbR^d$ with $M$ segments.}
\STATE{Use eigenvalue decomposition of $\widehat{m}_2$ to compute the $(d,M)$-shaped matrix $Q$ whose columns are the top $M$ eigenectors of $\widehat{m}_2$.}
\STATE{Estimate the moments of the projected curve: $(\widehat{m}_1)_{\proj} = Q^T\widehat{m}_1$, $(\widehat{m}_2)_{\proj} = Q^T\widehat{m}_2Q$, $(\widehat{m}_3)_{\proj} = \widehat{m}_3(Q, Q, Q)$.}
\STATE{Randomly initialize the running best predicted curve $\hat{C}_{\proj\textrm{best}}$ and set the running best loss from $n$ trials: $\textrm{loss}_{\best} \gets +\infty$.}
\STATE{Define a loss function from matching just the third moment or the all of first three moments: $L(C)\defeq \sum_{k = 1}^3 \left\|m_k(C) - \widehat{m}_k\right\|_F^2$ or $L(C)\defeq \left\|m_3(C) - \widehat{m}_3\right\|_F^2$ (as in (\ref{eqn:loss-defns})).}
\COMMENT{\texttt{iteratively update best estimate}}
\FOR{$k = 1,\dots, n$}
    \STATE{Randomly initialize $\hat{C}_{\proj\textrm{init}}$ with shape $(M+1, d)$.}
    \STATE{Solve $\hat{C}_{\proj} = \argmin_{C}L(C)$ via gradient descent with $\hat{C}_{\proj\textrm{init}}$ as initialization.}
    \IF{$L(\hat{C}_{\proj}) < \emph{loss}_{\best}$}
    \STATE{Update the best loss: $\textrm{loss}_{\best} \gets L(\hat{C}_{\proj})$.}
    \STATE{Update the best projected curve: $\hat{C}_{\proj\textrm{best}}\gets\hat{C}_{\proj}$.}
    \ENDIF
\ENDFOR

Deproject the best projected curve: $\hat{C} = \hat{C}_{\proj\textrm{best}}Q^T$.
\end{algorithmic}
\end{algorithm}

In Table \ref{tab:cloud-cnst}, we present results from the experiment with moments $\widehat{m}_k$ estimated from a point cloud with $N=10^8$ points, with $M=16$ segments, ambient dimension $d = 24$, and noise level $\sigma^2 = 1/4$. We report the lower quartile, median, and upper quartile of the square root of the curve loss $\rho(C^*,\hat{C})$ (see (\ref{eqn:curve-dist})) and relative third moment loss $\|m_3^* - m_3(\hat{C})\|_F / \|m_3^*\|_F$ between the ground truth and predicted curves over 500 trials with different random ground truth curves $C$. We present the results for the outputs of both phases of Algorithm \ref{alg:full-alg}, as well as the performance of both baselines with the best result out of $n = 10$ random initializations. In Table \ref{tab:true-cnst}, we present the same statistics for the experiment with access to ground truth moments with $M=32$, and $d = 48$ (we are able to perform the experiment in higher dimensions because we no longer have to generate and compute moments of a large point cloud, which is the primary performance bottleneck). We see that in both experiments, Algorithm \ref{alg:full-alg} significantly outperforms both baselines in the curve loss, even without the finetuning phase. Observe also that the actual third moment loss is lower for the baseline algorithms even though the curve loss is higher. This indicates that the baseline algorithms are converging to a curve different from the ground truth $C^*$; this can be seen in Figures \ref{fig:cloud-cnst} and \ref{fig:true-cnst}, where the result of the baseline algorithm is a curve clearly much different from the ground truth. Indeed, this suggests that while determination of a curve from the third moment is \textit{locally} well-posed, it is not \textit{globally} well-posed, and indeed there can be many curves with very similar, if not identical, third moment. 

In the experiment with access to ground truth moments, the sorting of subspaces was successful $485/500 = 97.2\%$ times, where we define success as there being exactly one palindromic pair of sorted subspaces. In the experiment with moments estimated from a point cloud, the success rate was $371/500 = 74.2\%$. We show some predicted curves from Algorithm \ref{alg:full-alg} for both experiments in Figures \ref{fig:cloud-cnst} and \ref{fig:true-cnst}.
\begin{table}
    \small
    \caption{Lower quartile, median, and upper quartile root curve losses $\sqrt{\rho(C^*,\hat{C})}$ (see (\ref{eqn:curve-dist})) and relative third moment losses $\|m_3^* - m_3(\hat{C})\|_F / \|m_3^*\|_F $ over 500 trials with moments estimated from point cloud with $N = 10^8$ points, $M=16$ segments, ambient dimension $d = 24$, noise level $\sigma^2=1/4$. The lowest value in each column is bolded.} \label{tab:cloud-cnst} 
    \centering
    \begin{tabular}{lcccccc}
    \toprule
    \multicolumn{1}{l}{} & \multicolumn{3}{c}{\textbf{root curve loss}} & \multicolumn{3}{c}{\textbf{rel. third moment loss}} \tabularnewline
    \cmidrule(lr){2-4} \cmidrule(lr){5-7}
    \multicolumn{1}{l}{\textbf{method}} & \textbf{25\%} & \textbf{50\%} & \textbf{75\%} & \textbf{25\%} & \textbf{50\%} & \textbf{75\%} \tabularnewline
    \midrule
    \makecell[l]{Algorithm \ref{alg:full-alg}\\ after first phase} & 0.4766 & 0.5263 & 0.7602 & 0.0577 & 0.0682 & 0.0892
    \tabularnewline
    \makecell[l]{Algorithm \ref{alg:full-alg}\\ after second phase} & \bftab 0.2479 & \bftab 0.3027 & \bftab 0.7091
    & 0.0663 & 0.1009 & 0.1570 \tabularnewline
    \midrule
    \makecell[l]{Baseline algorithm \\with third moment only} & 1.6202 & 1.6910 & 1.7544 & 0.0171 & 0.0203 & 0.0246 \tabularnewline
    \makecell[l]{Baseline algorithm \\with all three moments} &1.5975 & 1.6647 & 1.7337 & \bftab 0.0139 & \bftab 0.0157 & \bftab 0.0171 \tabularnewline
    \bottomrule
    \end{tabular}
    \vspace{3em}
    \caption{Lower quartile, median, and upper quartile root curve losses $\sqrt{\rho(C^*,\hat{C})}$ (see (\ref{eqn:curve-dist})) and relative third moment losses $\|m_3^* - m_3(\hat{C})\|_F / \|m_3^*\|_F $ over 500 trials  with access to ground truth moments, $M=32$ segments, ambient dimension $d = 48$. The lowest value in each column is bolded.}\label{tab:true-cnst}
    
    \centering
    \begin{tabular}{lcccccc}  
    \toprule
    \multicolumn{1}{l}{} & \multicolumn{3}{c}{\textbf{root curve loss}} & \multicolumn{3}{c}{\textbf{rel. third moment loss}} \tabularnewline
    \cmidrule(lr){2-4} \cmidrule(lr){5-7}
    \multicolumn{1}{l}{\textbf{method}} & \textbf{25\%} & \textbf{50\%} & \textbf{75\%} & \textbf{25\%} & \textbf{50\%} & \textbf{75\%} \tabularnewline
    \midrule
    \makecell[l]{Algorithm \ref{alg:full-alg}\\ after first phase} & 0.6823 & 0.7998 & \bftab 0.9911 & 0.0428 & 0.0636 & 0.1056\tabularnewline
    \makecell[l]{Algorithm \ref{alg:full-alg}\\ after second phase} & \bftab 0.4438 & \bftab 0.6225 & 1.0536 & 0.0698 & 0.1045 & 0.2133\tabularnewline
    \midrule
    \makecell[l]{Baseline algorithm \\with third moment only} & 1.9966 & 2.0521 & 2.1094 & 0.0153 & 0.0174 & 0.0204\tabularnewline
    \makecell[l]{Baseline algorithm \\with all three moments} & 1.9905 & 2.0435 & 2.1054 & \bftab 0.0118 & \bftab 0.0127 & \bftab 0.0137\tabularnewline
    \bottomrule 
    \end{tabular}
\end{table}

\begin{figure}
\includegraphics[width = \textwidth]{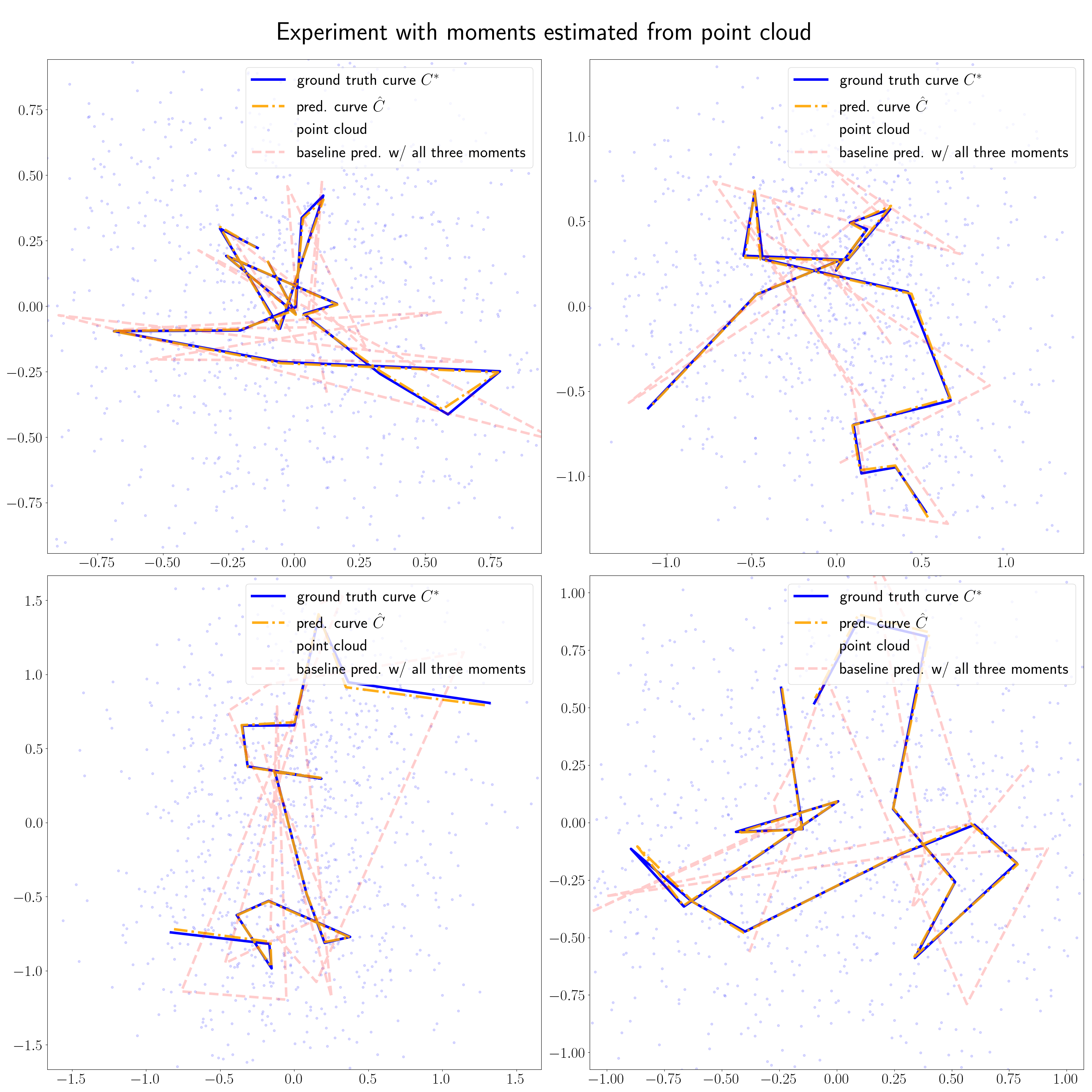}
\caption{Four uncurated experimental results for curves with $M=16$ segments, ambient dimension $d = 24$, noise level $\sigma^2=1/4$ and moments estimated from point cloud with $N = 10^8$ points. We show the ground truth curve and a subsample of 1000 points from the point cloud in blue. The output of Algorithm \ref{alg:full-alg} is given by the dashed orange line. The output of the baseline algorithm with matching to all three moments is shown faintly in red. All curves are projected down to the first two dimensions of $\bbR^{24}$}
\label{fig:cloud-cnst}
\end{figure}

\begin{figure}
\includegraphics[width = \textwidth]{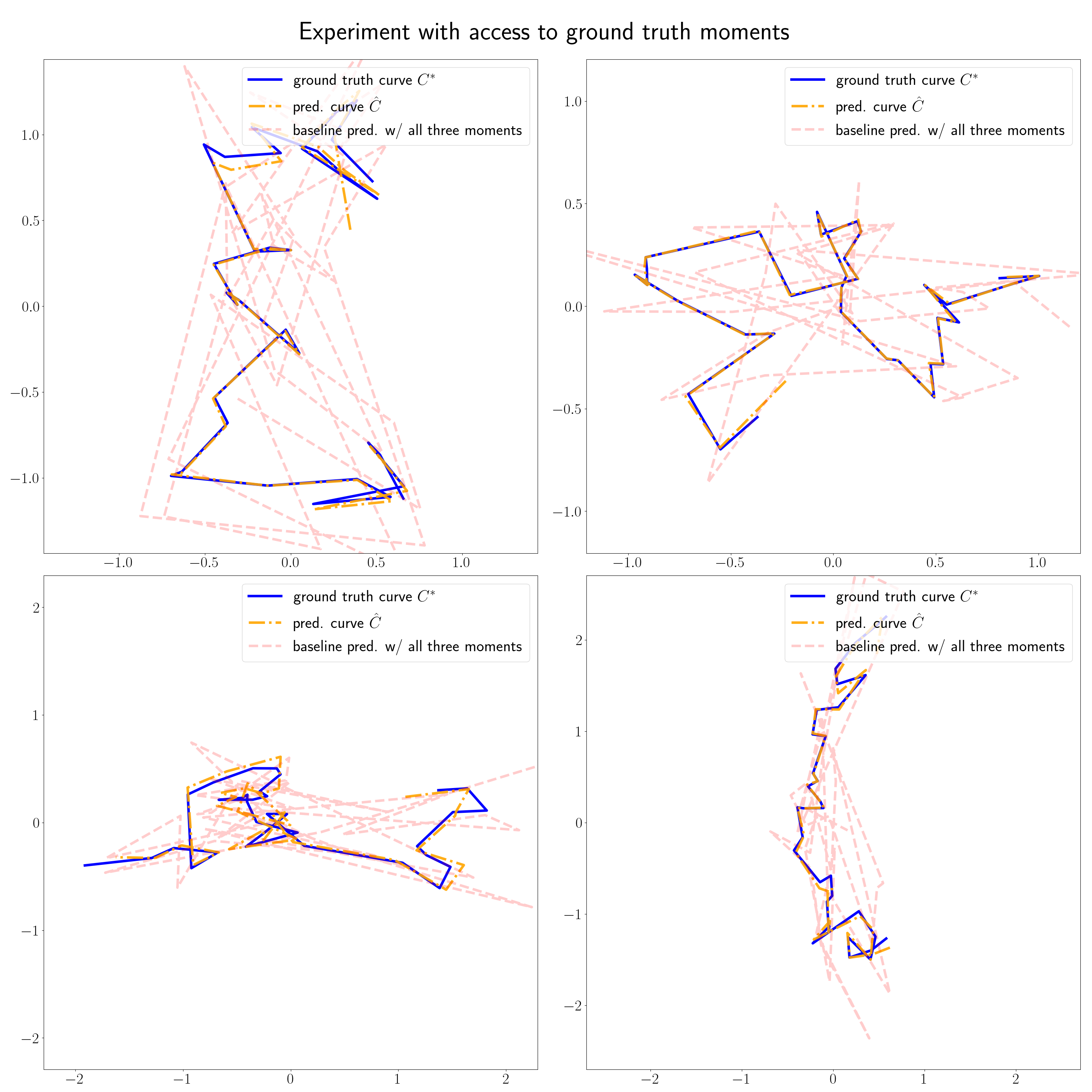} 
\caption{Four uncurated experimental results for curves with $M=32$ segments, ambient dimension $d = 48$, and access to ground truth moments. We show the ground truth curve in blue. The output of Algorithm \ref{alg:full-alg} is given by the dashed orange line. The output of the baseline algorithm with matching to all three moments is shown faintly in red. All curves are projected down to the first two dimensions of $\bbR^{48}$}
\label{fig:true-cnst}
\end{figure}

\section{Conclusion and Future Work}

In this paper, we consider the problem of resolving a PWL constant speed curve in high dimensions from a high noise point cloud around the curve. We show that for sufficiently large noise, any algorithm hoping to recover the underlying curve with high probability will require at least $O(\sigma^6)$ samples, giving an asymptotic lower bound on the sample complexity. We then show that this lower bound is asymptotically tight by providing a recovery algorithm based on the third moment tensor, exploiting the fact that the third moment uniquely determines a curve in some neighborhood. Further work is needed in showing theoretical guarantees for our algorithm, e.g., showing robustness in the presence of error in the estimation of the curve moments $m_k(C)$ from the data, or guarantees on Algorithm \ref{alg:chain-building}. There is also work to be done in recovery of curves other than PWL curves, as well as investigating the phase transition between the low noise and high noise recovery problems.

\section*{Acknowledgments}
PL would like to thank Christopher Stith for enlightening discussions on the differential geometry arguments used in the proof of Proposition \ref{prop:matching-m1m2}. YK would like to thank Xin Tong and Wanjie Wang from the National University of Singapore for discussions regarding the potential usage of moments in recovering a curve. We also thank the editor and anonymous reviewers at SIMODS for carefully reading the manuscript and offering helpful suggestions.

YK was supported partially by DMS-2111563, DMS-2339439, DE-SC0022232, and the Sloan Foundation. PL was supported partially by DE-SC0022232.

\appendix 

\section{Cloud Tracing Algorithm for the Low-Noise Regime}
\label{sec:cloud-tracing}
In this section, we very briefly outline our algorithm for tracing through low-noise point clouds. We do not perform any large-scale experiments or detailed analysis of the method. We emphasize that the purpose of this section is primarily pedagogical and meant to demonstrate that the curve recovery problem is qualitatively different in the low-noise regime.

For simplicity, we consider point clouds coming from a PWL curve with an \textit{a priori} known constant segment lengths $L$.  First consider the issue of resolving an ``elbow'' with three points, $c_0, c_1, c_2\in\bbR^d$, where $c_1$ is known. Since noise is low, we can compute the point cloud local to this elbow by considering all points in the cloud within distance $L$ of $c_1$. The two-dimensional subspace formed by the three points can be estimated from the second empirical moment, onto which we can project the local point cloud. The direction (which we can identify with an angle in the two-dimensional subspace) of each vector pointing from $c_1$ to a projected point from the local cloud can be computed. The histogram of angle counts should be bimodal, with the peaks $\theta_0$ and $\theta_2$ occurring at the directions in which the projections of $c_0$ and $c_2$ lie. Since the segment lengths are known, we can then estimate the projections of $c_0$ and $c_2$ and deproject back to $\bbR^d$.

Given a full curve and an estimate of $c_0$, we can first compute the point cloud local to $c_0$, that is, all points that are within distance $L$ away from $c_0$. We can apply a similar technique as above on this local cloud and $c_0$ to get an estimate for $c_1$, then iteratively apply the elbow resolution technique to the elbow centered at $c_i$ and the point cloud local to $c_i$ to estimate $c_{i+1}$. To account for the error that might propagate at each step, we can average the two curves obtained by applying this method with $c_0$ and $c_M$ as the starting points. Figure \ref{fig:tracing-lohi} shows the outcome of this method on curves in $\bbR^2$ applied to only the initial point $c_0$.

The reason why this method fails for the high noise case is that the local point clouds are no longer meaningful. When noise is low, the points within distance $L$ of $c_i$ have a high likelihood of coming from the elbow given by $c_{i-1}$, $c_i$, and $c_{i+1}$, and so the geometric structure of the curve around $c_i$ is preserved by the local cloud. When noise is sufficiently high, the points within distance $L$ of $c_i$ are likely to have come from elsewhere in the curve. Without performing further detailed analysis, note that since this algorithm only requires accurate estimation of the second moment, the number of points needed for this method to work is order $O(\sigma^4)$, rather than the requirement of $O(\sigma^6)$ to accurately estimate the third moment for our high noise recovery algorithm.

\section{Proof of Proposition {\ref{prop:matching-m1m2}}}
\label{sec:matching-m1m2-proof}
Let $m_1$ and $m_2$ be the first two moments of $C^*$, let $p^*\in\bbR^{M}$ be the proportional segment lengths of $C^*$, and let $Z^*>0$ be the total length of $C^*$. Then $C^*$, $p^*$, $Z^*$ is one particular solution to the following system of polynomial equations in $C = [c_0,\dots,c_M]$, $p$, $Z$ that we can write down using the relaxed moment expressions in (\ref{eqn:decoupled-moments}):
\begin{align}
  \label{eqn:system}
  \begin{split}
    \frac{1}{2}C^TA_sp &= m_1\\
    C^T\ol{A}(p)C &= m_2\\
    Z^2p_i^2 &= \|c_i - c_{i-1}\|^2, \hspace{2em} i = 1,\dots, M\\
    \sum_{i=1}^M p_i &= 1. 
  \end{split}
\end{align} 
The first line in (\ref{eqn:system}) is matching the relaxed first moment from $C$ and $p$ to the prescribed first moment $m_1$, and similarly for the second line and the second moment $m_2$. The third line ``unrelaxes'' the relaxed moments by requiring that the $p_i$ are equal to the proportional segment lengths, and the fourth line ensures that $Z$ is equal to the length of the full curve.

The first line of (\ref{eqn:system}) consists of $M$ equations, the second line consists of $M(M+1)/2$ equations ($m_2$ is a symmetric matrix, so we can eliminate all equations coming from the lower triangular part of $m_2$), the third line consists of $M$ equations, and the last line is a single equation, so we have a total of $M(M+1)/2 + 2M + 1$ equations. To count the number of variables, we have $M(M+1)$ variables from $C$, $M$ variables from $p$, and one variable from $Z$ for a total of $(M+1)^2$ variables. We therefore have more variables than equations, and hence this is an underdetermined system of polynomial equations.

If we subtract the right hand side from both sides of each line of (\ref{eqn:system}), we can consider the left hand side of the entire system as a function $f:\bbR^{(M+1)^2}\to \bbR^{M(M+1)/2 + 2M + 1}$ of $(C, p, Z)$ (where scalars, vectors, and matrices are flattened and concatenated appropriately), and we can think of the solution set to (\ref{eqn:system}) as the preimage of $0$ under $f$. 

Let $Df\in\bbR^{(M(M+1)/2 + 2M + 1)\times (M+1)^2}$ be the Jacobian of $f$ in the variables $C$, $p$, and $Z$; this is a short and wide matrix. By Claim \ref{claim:Df-rank-deficient} (see below), $Df(C^*, p^*, Z^*)$ is generically full rank. Since rank is lower semicontinuous, for generic $C^*$ there exists a neighborhood $U$ of $(C^*, p^*, Z^*)$ where $Df$ is full rank and hence surjective at all $(C, p, Z)$ in $U$. Let $f|_U:U\to\bbR^{(M+1)^2}$ be the restriction of $f$ to $U$. Then $0$ is a regular value of $f|_U$, i.e., a point such that $Df|_U$ is surjective at every point in $f|_U^{-1}(0)$. Note that we know that $f|_U^{-1}(0)$ is nonempty, since it contains at least the point $(C^*, p^*, Z^*)$. Then by the regular level set theorem from differential topology (also called the preimage theorem, see Theorem 9.9 in \cite{Tu}), $f^{-1}|_U(0)$ is a submanifold of $U$ with dimension $(M+1)^2 - [M(M+1)/2 + 2M + 1] = M(M-1)/2 > 0$. In other words, $(C^*, p^*, Z^*)$ is not an isolated point in the real algebraic variety of (\ref{eqn:system}). Therefore, for sufficiently small $\ep>0$, we can find some $(\Gamma, \pi, \zeta)\in U$ such that $\|C^*-\Gamma\|_F=\ep$ and $(\Gamma, \pi, \zeta)$ also solves (\ref{eqn:system}), and hence $\Gamma$ has the same first and second moments as $C^*$.

\begin{claim}
    \label{claim:Df-rank-deficient}
    Let $(C, p, Z)$ be a solution to (\ref{eqn:system}); then $Df(C, p, Z)$ is generically full rank. In particular, a necessary condition for $Df(C, p, Z)$ to be rank-deficient is that the all-ones vector $\mathbf{1}\in\bbR^{M+1}$ must be in the image of $C$, where here we are thinking of $C$ as a matrix.
\end{claim}
\begin{proof}
Let $f$ be as in the proof of Proposition \ref{prop:matching-m1m2} above. Then the Jacobian $Df$ has the following block form; the values in the margins of the matrix in the first line denote dimensions:
\begin{align}
    \begin{split}
        Df &= \begin{array}{ccc@{\hskip 1em} l}
            \hspace{-8.5em}(M{+}1)M & \hspace{-22em}M & \hspace{-11.5em}1 & \\[6pt]  % column headers aligned directly over matrix columns
            \left[
            \begin{array}{ccc}
              \partial f_1/\partial C & \partial f_1/\partial p & \partial f_1/\partial Z \\
              \partial f_2/\partial C & \partial f_2/\partial p & \partial f_2/\partial Z \\
              \partial f_3/\partial C & \partial f_3/\partial p & \partial f_3/\partial Z \\
              \partial f_4/\partial C & \partial f_4/\partial p & \partial f_4/\partial Z
            \end{array}
            \right]
            &
            \begin{array}{l}
              M \\
              M(M+1)/2 \\
              M \\
              1
            \end{array}
          \end{array}\\
          &= \hspace{0.5em}\begin{bmatrix}
        \partial f_1/\partial C & \partial f_1/ \partial p & \mathbf{0} \\
        \partial f_2/\partial C & \partial f_2/ \partial p & \mathbf{0} \\
        \partial f_3/\partial C & \partial f_3/ \partial p & \partial f_3 / \partial Z \\
        \mathbf{0} & \mathbf{1} & \mathbf{0}
        \end{bmatrix}
    \end{split}
\end{align}
In the second matrix, $\mathbf{1}$ and $\mathbf{0}$ denote all-ones and all-zeros matrices of the appropriate size. We first claim that the $\partial f_1/\partial C$ block is full rank. Let $\left[\partial f_1/\partial C\right]_i$ denote the $i$th row of $\partial f_1/\partial C$, reshaped to a $(M+1, M)$-shaped matrix; these are the rows of this block. Then $\left[\partial f_1/\partial C\right]_i$ is the matrix of all zeros, except the $i$th column is equal to $A_sp/2$; it follows that the $\partial f_1 / \partial C$ block is full rank. 

We now claim that the block $\partial f_2/\partial C$ is also full rank. Let $\left[\partial f_2/\partial C\right]_{j, k}$ denote the derivative of the $j, k$ component of $f_2$ with respect to $C$, reshaped to a $(M+1, M)$-shaped matrix, with $j\geq k$ due to the symmetry of $m_2$; these are again the rows of the block. Then for $j\neq k$, the $j$th column of $\left[\partial f_2/\partial C\right]_{j, k}$ is the $k$th column of $\ol{A}(p)C$ and the $k$th column of $\left[\partial f_2/\partial C\right]_{j, k}$ is the $j$th column of $\ol{A}(p)C$; all other entries are zero. When $j= k$, the $j$th column of  $\left[\partial f_2/\partial C\right]_{j, k}$ is equal to two times the $j$th column of $\ol{A}(p)(C)$. Therefore, the only way that the rows of $\partial f_2/\partial C$ are linearly dependent is if the columns of $\ol{A}(p)C$ are dependent; but $\ol{A}(p)$ and $C$ are generically full rank, and hence the block $\partial f_2/\partial C$ is also full rank, as desired.

The block $\partial f_3/\partial Z$ is equal to the vector $2Zp^{\odot 2}$, where $\cdot^{\odot 2}$ denotes the entrywise square. Therefore, a necessary condition for $Df$ to be rank-deficient is that $A_sp$ must be in the image of the columns of $\ol{A}(p)C$, i.e., there exists a vector $x$ such that $\ol{A}(p)Cx = A_sp$. Multiplying both sides by the inverse of $\ol{A}(p)$, we get $Cx = (\ol{A}(p))^{-1}A_sp$; the vector on the right hand side is equal to the all-twos vector, and hence a necessary condition for $Df$ to be rank-deficient is that the all-ones vector must be in the image of $C$. Since $C$ is shape $(M+1, M)$ (i.e., a tall matrix), its image must be rank-deficient, and hence $Df$ is generically full rank.
\end{proof}

\section{Proof of Proposition \ref{prop:main-chi2-bd}}
\label{sec:main-chi2-bd-proof}
Our proof is modeled on the proof of Theorem A.1 in \cite{mra}. The idea of the proof is to bound the $\chi^2$ divergence with a Taylor series of the differences of moments. We first need to prove a few technical lemmas and introduce some notation. Let $\phi_\sigma(x)$ denote the density of a Gaussian centered at the origin with covariance $\sigma^2I_d$:
\begin{equation}
  \phi_\sigma(x) = \frac{1}{\sqrt{(2\pi\sigma^2)^d}}\exp\left(-\frac{\|x\|^2}{2\sigma^2}\right).
\end{equation}
Let $p_{C,\sigma}(x)$ denote the density of the noisy curve $P_{C,\sigma}$. Note that we have the expression
\begin{equation}
  \label{eqn:nc-density}
  p_{C,\sigma}(x) = \int_0^1 \phi_\sigma(x - C(t))\,dt = \int_0^1\frac{1}{\sqrt{(2\pi\sigma^2)^d}}\exp{\Bigg(}-\frac{\|x - C(t)\|^2}{2\sigma^2}{\Bigg)}\,dt.
\end{equation}
We will be using the facts that $C$ is mean zero and $\|C(t)\|<1$ throughout this section.

\begin{lem}
  \label{lem:density-lb}
  We have the lower bound 
  \begin{equation}
    \label{eqn:density-lb}
    p_{C,\sigma}(x) \geq \phi_\sigma(x)\exp\left(-\frac{1}{2\sigma^2}\right).
  \end{equation}
\end{lem}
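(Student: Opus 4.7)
The plan is to bound the integrand $\phi_\sigma(x - C(t))$ pointwise from below by $\phi_\sigma(x)$ times a simple exponential factor, and then handle the inner product term via Jensen's inequality combined with the mean-zero assumption on $C$.

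First, I would expand the squared norm inside the Gaussian:
\[\|x - C(t)\|^2 = \|x\|^2 - 2\langle x, C(t)\rangle + \|C(t)\|^2,\]
which lets me factor the integrand as
\[\phi_\sigma(x - C(t)) = \phi_\sigma(x)\exp\!\left(\frac{\langle x, C(t)\rangle}{\sigma^2} - \frac{\|C(t)\|^2}{2\sigma^2}\right).\]
Using the standing hypothesis $\|C(t)\| < 1$, the second term in the exponent is bounded below by $-\tfrac{1}{2\sigma^2}$, independently of $t$. Pulling that constant out of the integral and using $Z = 1$ gives
\[p_{C,\sigma}(x) \geq \phi_\sigma(x)\exp\!\left(-\frac{1}{2\sigma^2}\right)\int_0^1 \exp\!\left(\frac{\langle x, C(t)\rangle}{\sigma^2}\right)dt.\]

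The remaining step is to show that the integral on the right is at least $1$. Since $\exp$ is convex, Jensen's inequality yields
\[\int_0^1 \exp\!\left(\frac{\langle x, C(t)\rangle}{\sigma^2}\right)dt \;\geq\; \exp\!\left(\frac{\langle x, \int_0^1 C(t)\,dt\rangle}{\sigma^2}\right) = \exp\!\left(\frac{\langle x, m_1(C)\rangle}{\sigma^2}\right).\]
Because $C$ is assumed to be mean zero, $m_1(C) = 0$, and the right side equals $1$. Combining this with the previous display produces the claimed bound.

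There is no serious obstacle here: the only two nontrivial inputs are $\|C(t)\|<1$ (used to handle the quadratic term) and the mean-zero assumption (used together with Jensen to kill the linear term). If one wanted, the argument could be streamlined by noting that $p_{C,\sigma}(x)/\phi_\sigma(x)$ is the moment generating function of $\langle x/\sigma^2, C(\tau)\rangle$ for $\tau\sim U[0,1]$, multiplied by $\exp(-\|C(\tau)\|^2/(2\sigma^2))$; Jensen and the norm bound together then give exactly the stated inequality.
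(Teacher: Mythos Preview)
Your proof is correct and follows essentially the same approach as the paper: expand the square to factor out $\phi_\sigma(x)$, use $\|C(t)\|<1$ to bound the quadratic term uniformly by $\exp(-1/(2\sigma^2))$, and then apply Jensen's inequality together with the mean-zero assumption to show the remaining integral is at least $1$.
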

\begin{proof}
  Using the fact that the exponential function is convex, we compute
  \begin{align*}
    p_{C,\sigma}(x) &= \int_0^1\frac{1}{\sqrt{(2\pi\sigma^2)^d}}\exp\left(-\frac{\|x - C(t)\|^2}{2\sigma^2}\right)\,dt\\
    &= \int_0^1 \frac{1}{\sqrt{(2\pi\sigma^2)^d}}\exp\left(-\frac{\|x\|^2}{2\sigma^2}\right)\exp\left(\frac{\inner{C(t)}{x}}{\sigma^2}\right)\exp\left(-\frac{\|C(t)\|^2}{2\sigma^2}\right)\,dt \\
    &\geq \phi_\sigma(x)\exp\left(-\frac{1}{2\sigma^2}\right)\int_0^1 \exp\left(\frac{\inner{C(t)}{x}}{\sigma^2}\right)\,dt\\
    &\geq \phi_\sigma(x)\exp\left(-\frac{1}{2\sigma^2}\right)\exp\left(\int_0^1\frac{\inner{C(t)}{x}}{\sigma^2}\right)\,dt\\
    &= \phi_\sigma(x)\exp\left(-\frac{1}{2\sigma^2}\right)
  \end{align*}
  The penultimate line is by Jensen's inequality, and the last line is by the fact that $C$ is mean zero.
\end{proof}

\begin{lem}
  Let $C$ and $\Gamma$ be any two curves. Then
  \label{lem:chi2-equality}
  \begin{equation}
    \label{eqn:chi2-equality}
    \int_{\bbR^d}\frac{p_{C,\sigma}(x)p_{\Gamma,\sigma}(x)}{\phi_\sigma(x)}\,dx = \int_{[0, 1]^2} \exp\left(\frac{\inner{C(t)}{\Gamma(s)}}{\sigma^2}\right)\,dtds.
  \end{equation}
\end{lem}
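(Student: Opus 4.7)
The plan is a direct computation: substitute the integral representation of $p_{C,\sigma}$ and $p_{\Gamma,\sigma}$ into the left-hand side, swap the order of integration, and reduce the inner integral to a Gaussian integral that equals $1$ times the desired exponential factor.

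Concretely, starting from (\ref{eqn:nc-density}), I would write
\begin{equation*}
\frac{p_{C,\sigma}(x)p_{\Gamma,\sigma}(x)}{\phi_\sigma(x)}
= \frac{1}{\phi_\sigma(x)}\int_0^1\!\!\int_0^1 \phi_\sigma(x-C(t))\,\phi_\sigma(x-\Gamma(s))\,dt\,ds,
\end{equation*}
then integrate in $x$ and apply Tonelli/Fubini (everything is nonnegative, so the exchange is unconditional) to obtain
\begin{equation*}
\int_{\bbR^d}\frac{p_{C,\sigma}(x)p_{\Gamma,\sigma}(x)}{\phi_\sigma(x)}\,dx
= \int_0^1\!\!\int_0^1\left[\int_{\bbR^d}\frac{\phi_\sigma(x-C(t))\,\phi_\sigma(x-\Gamma(s))}{\phi_\sigma(x)}\,dx\right]dt\,ds.
\end{equation*}

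The heart of the proof is then evaluating the inner integral in $x$ for fixed $t,s$. Expanding the Gaussian densities and combining the three exponents gives a quadratic in $x$ whose coefficient of $\|x\|^2$ is again $-1/(2\sigma^2)$ (the $+\|x\|^2$ from dividing by $\phi_\sigma(x)$ cancels one of the two $-\|x\|^2$ contributions). Completing the square yields
\begin{equation*}
\|x-C(t)\|^2+\|x-\Gamma(s)\|^2-\|x\|^2 = \|x-(C(t)+\Gamma(s))\|^2 - 2\inner{C(t)}{\Gamma(s)}.
\end{equation*}
After accounting for the $(2\pi\sigma^2)^{-d/2}$ normalizations (two in the numerator, one in the denominator), the integrand in $x$ is a Gaussian density centered at $C(t)+\Gamma(s)$ with covariance $\sigma^2 I_d$ multiplied by $\exp(\inner{C(t)}{\Gamma(s)}/\sigma^2)$. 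The Gaussian integrates to $1$, leaving exactly $\exp(\inner{C(t)}{\Gamma(s)}/\sigma^2)$, which proves the claim.

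The steps are all routine; there is no real obstacle beyond keeping track of normalizing constants and checking that Fubini applies, which is automatic from nonnegativity. The only subtle point is the cancellation of the $\|x\|^2$ terms, which is precisely why dividing by $\phi_\sigma(x)$ (rather than some other reference measure) makes the identity clean.
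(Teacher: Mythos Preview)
Your proposal is correct and follows essentially the same approach as the paper: substitute the integral representations, apply Fubini (justified by nonnegativity), complete the square in $x$ to recognize a shifted Gaussian density, and integrate to $1$. The paper first factors $p_{C,\sigma}(x)=\phi_\sigma(x)\int_0^1 \exp(\inner{C(t)}{x}/\sigma^2)\exp(-\|C(t)\|^2/(2\sigma^2))\,dt$ before combining terms, whereas you work directly with $\phi_\sigma(x-C(t))\phi_\sigma(x-\Gamma(s))/\phi_\sigma(x)$ and use the clean identity $\|x-C(t)\|^2+\|x-\Gamma(s)\|^2-\|x\|^2=\|x-(C(t)+\Gamma(s))\|^2-2\inner{C(t)}{\Gamma(s)}$; these are two equivalent bookkeepings of the same completing-the-square step.
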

\begin{proof}
    Note that we have 
    \[p_{C,\sigma}(x) = \phi_\sigma(x)\int_0^1\exp\left(\frac{\inner{C(t)}{x}}{\sigma^2}\right)\exp\left(-\frac{\|C(t)\|^2}{2\sigma^2}\right)\,dt,\]
    and similarly for $p_{\Gamma,\sigma}$. Then
    {\small\begin{align*}
      \int_{\bbR^d}&\frac{p_{C,\sigma}(x)p_{\Gamma,\sigma}(x)}{\phi_\sigma(x)}\,dx \\
      &= \int_{\bbR^d}\phi_\sigma(x)\left[\int_0^1\exp\left(\frac{\inner{C(t)}{x}}{\sigma^2}\right)\exp\left(-\frac{\|C(t)\|^2}{2\sigma^2}\right)\,dt\right]\\
      &\hspace{10em}\left[\int_0^1\exp\left(\frac{\inner{\Gamma(s)}{x}}{\sigma^2}\right)\exp\left(-\frac{\|\Gamma(s)\|^2}{2\sigma^2}\right)\,ds\right]\,dx\\
      &=\int_{\bbR^d}\frac{1}{\sqrt{(2\pi\sigma^2)^d}}\exp\left(-\frac{\inner{x}{x}}{2\sigma^2}\right)\Bigg[\int_{[0, 1]^2}\exp\left(\frac{2\inner{C(t) + \Gamma(s)}{x}}{2\sigma^2}\right)\\
      &\hspace{10em}\exp\left(-\frac{\|C(t)\|^2 + \|\Gamma(s)\|^2}{2\sigma^2}\right)\,dtds\Bigg]dx\\
      &=\int_{[0, 1]^2}\Bigg[\int_{\bbR^d}\frac{1}{\sqrt{(2\pi\sigma^2)^d}}\exp\left(-\frac{\inner{x}{x} - 2\inner{C(t) + \Gamma(s)}{x} + \inner{C(t) + \Gamma(s)}{C(t) + \Gamma(s)}}{2\sigma^2}\right)\\
      &\hspace{5em}\exp\left(\frac{\inner{C(t) + \Gamma(s)}{C(t) + \Gamma(s)}}{2\sigma^2}\right)\exp\left(-\frac{\|C(t)\|^2 + \|\Gamma(s)\|^2}{2\sigma^2}\right)\,dx\Bigg]dtds\\
      &=\int_{[0, 1]^2}\underbrace{\int_{\bbR^d}\frac{1}{\sqrt{(2\pi\sigma^2)^d}}\exp\left(-\frac{\|x - (C(t) + \Gamma(s))\|^2}{2\sigma^2}\right)\,dx}_{= 1}\\
      &\hspace{3em}\exp\left(\frac{\|C(t)\|^2 + \|\Gamma(s)\|^2 + 2\inner{C(t)}{\Gamma(s)} - \|C(t)\|^2 - \|\Gamma(s)\|^2}{2\sigma^2}\right)\,dtds\\
      &= \int_{[0, 1]^2}\exp\left(\frac{\inner{C(t)}{\Gamma(s)}}{\sigma^2}\right)\,dtds.\\
    \end{align*}}
    In line 4, we are using Fubini's theorem and completing the square.
\end{proof}

\begin{lem}
  \label{lem:E-diff-bd}
  Let $C(t)$, $\Gamma(t)$ be parametric curves such that $\|C(t) - \Gamma(t)\|\leq 1/3$ and $\|C(t)\|,\|\Gamma(t)\|\leq 1$ for all $t\in[0,1]$. With $\rho(C,\Gamma)$ as defined in (\ref{eqn:curve-dist}), we have the upper bound 
  \begin{equation}
  \label{eqn:E-diff-bd}
  \left\|m_k(C) - m_k(\Gamma)\right\|_F^2 \leq 12\cdot2^k\rho(C,\Gamma).
  \end{equation}

\end{lem}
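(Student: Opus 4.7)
The plan is to prove the bound pointwise in $t$ on $\|C(t)^{\otimes k} - \Gamma(t)^{\otimes k}\|_F^2$, then integrate using Jensen's inequality. Specifically, I will first use Jensen's inequality (which is valid since Frobenius norm squared is convex) to obtain
\[
  \|m_k(C) - m_k(\Gamma)\|_F^2 = \left\|\int_0^1 \bigl(C(t)^{\otimes k} - \Gamma(t)^{\otimes k}\bigr)\,dt\right\|_F^2 \leq \int_0^1 \|C(t)^{\otimes k} - \Gamma(t)^{\otimes k}\|_F^2\,dt.
\]
It will then suffice to show that $\|C(t)^{\otimes k} - \Gamma(t)^{\otimes k}\|_F^2 \leq 12\cdot 2^k \|C(t) - \Gamma(t)\|^2$ for each $t\in[0,1]$.

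To establish this pointwise bound, I will use the standard telescoping identity for tensor powers,
\[
  C^{\otimes k} - \Gamma^{\otimes k} = \sum_{j=0}^{k-1} C^{\otimes j}\otimes (C - \Gamma)\otimes \Gamma^{\otimes (k-1-j)},
\]
(suppressing the $t$-dependence). Since the Frobenius norm is multiplicative under tensor products, applying the triangle inequality term-by-term yields
\[
  \|C^{\otimes k} - \Gamma^{\otimes k}\|_F \leq \sum_{j=0}^{k-1}\|C\|^j\,\|C - \Gamma\|\,\|\Gamma\|^{k-1-j} \leq k\,\|C - \Gamma\|,
\]
where the last step uses the hypothesis $\|C(t)\|, \|\Gamma(t)\| < 1$. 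Squaring gives the pointwise bound with constant $k^2$ rather than $12\cdot 2^k$; since $k^2 \leq 12\cdot 2^k$ for every $k\geq 1$ (an elementary induction), the claimed bound follows after integrating in $t$.

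The main observation driving the proof is simply the telescoping trick plus the uniform sup-norm hypothesis on $C$ and $\Gamma$, so there is no real obstacle; the hypothesis $\|C(t) - \Gamma(t)\|\leq 1/3$ is not even needed for this version of the bound, though presumably the authors retain it for compatibility with the other lemmas in the proof of Proposition~\ref{prop:main-chi2-bd}. A slightly sharper constant would instead expand $C = \Gamma + (C-\Gamma)$ binomially and use the $1/3$ hypothesis to convert higher powers of $\|C-\Gamma\|$ into linear ones, producing a geometric series that sums to $3(4/3)^k\|C - \Gamma\|$; but since the stated bound has an exponential $2^k$ anyway, the cruder telescoping route suffices.
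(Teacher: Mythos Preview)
Your proof is correct and in fact more elementary than the paper's. Both arguments start identically, pushing the squared Frobenius norm inside the integral via Jensen, and then reduce to a pointwise bound on $\|C(t)^{\otimes k}-\Gamma(t)^{\otimes k}\|_F^2$. The difference is in how that pointwise bound is obtained. The paper invokes a separate technical lemma (Lemma~\ref{lem:tensor-pwr-diff-bound}) that expands $\|x^{\otimes k}-y^{\otimes k}\|_F^2$ via the binomial theorem in the auxiliary quantity $\gamma=\langle x,y-x\rangle$, and it is in controlling the resulting cross terms that the hypothesis $\|x-y\|\leq 1/3$ is actually used; the output is the stated constant $12\cdot 2^k$. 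Your telescoping argument sidesteps all of that: it gives the sharper pointwise bound $k^2\|C-\Gamma\|^2$ directly, does not need the $1/3$ assumption at all, and then dominates $k^2$ by $12\cdot 2^k$ only to match the lemma's stated constant. So your route is both shorter and strictly stronger; the paper's approach has no real advantage here beyond matching the form of the constant that appears downstream in the proof of Proposition~\ref{prop:main-chi2-bd}, where the $2^k/k!$ series needs to converge.
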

\begin{proof}
  Using Jensen's inequality, we compute
  \begin{align*}
    \left\|m_k(C) - m_k(\Gamma)\right\|_F^2 &= \left\|\int_0^1 C(t)^{\otimes k} - \Gamma(t)^{\otimes k} \,dt\right\|_F^2\\
    &\leq \int_0^1 \left\|C(t)^{\otimes k} - \Gamma(t)^{\otimes k}\right\|_F^2\,dt\\
    &\leq 12\cdot 2^k \int_0^1 \left\|C(t) - \Gamma(t)\right\|^2\,dt\tag{Lemma \ref{lem:tensor-pwr-diff-bound}}\\
    &=12\cdot 2^k \rho(C,\Gamma).
  \end{align*}
\end{proof}

We are now ready to prove Proposition \ref{prop:main-chi2-bd}.
\begin{proof}[Proof (of Proposition \ref{prop:main-chi2-bd})]
  Writing out the definition of the $\chi^2$ divergence and using the fact that $\sigma\geq 1$, we have 
  {\small\begin{align*}
    \chi^2(P_{C,\sigma}\| P_{\Gamma,\sigma}) &= \int_{\bbR^d}\frac{(p_{C,\sigma}(x) - p_{\Gamma,\sigma}(x))^2}{p_{C,\sigma}(x)}\,dx\\
    &\leq \underbrace{\exp\left(\frac{1}{2\sigma^2}\right)}_{\leq e^{1/2}}\int_{\bbR^d}\frac{(p_{C,\sigma}(x) - p_{\Gamma,\sigma}(x))^2}{\phi_\sigma(x)}\,dx\tag{Lem. \ref{lem:density-lb}}\\
    &\lesssim\int_{[0,1]^2}\exp\left(\frac{\inner{C(t)}{C(s)}}{\sigma^2}\right) - 2\exp\left(\frac{\inner{C(t)}{\Gamma(s)}}{\sigma^2}\right) + \exp\left(\frac{\inner{\Gamma(t)}{\Gamma(s)}}{\sigma^2}\right)\,dtds\tag{Lem. \ref{lem:chi2-equality}}\\
    &\leq\int_{[0,1]^2}\sum_{k\geq 0}\frac{1}{\sigma^{2k}k!}\left(\inner{C(t)}{C(s)}^k - 2\inner{C(t)}{\Gamma(s)}^k + \inner{\Gamma(t)}{\Gamma(s)}^k\right)\,dtds\tag{Taylor exp.}\\
    &=\sum_{k\geq 0}\frac{1}{\sigma^{2k}k!}\int_{[0,1]^2}\Bigg[\inner{C(t)^{\otimes k}}{C(s)^{\otimes k}} \tag{Fubini}\\
    &\hspace{10em}- 2\inner{C(t)^{\otimes k}}{\Gamma(s)^{\otimes k}} + \inner{\Gamma(t)^{\otimes k}}{\Gamma(s)^{\otimes k}}\Bigg]\,dtds\\
    &=\sum_{k\geq 0}\frac{1}{\sigma^{2k}k!}\Bigg[\inner{m_k(C)}{m_k(C)} - 2\inner{m_k(C)}{m_k(\Gamma)} + \inner{m_k(\Gamma)}{m_k(\Gamma)}\Bigg]\\
    &=\sum_{k\geq 0}\frac{1}{\sigma^{2k}k!}\|m_k(C)-m_k(\Gamma)\|^2_F\\
    &\leq \sum_{k\geq \ell}\frac{12\cdot 2^k}{\sigma^{2k}k!}\rho(C,\Gamma)\tag{Lem. \ref{lem:E-diff-bd}}\\
    &= \calK_\ell\sigma^{-2\ell}\rho(C,\Gamma).
  \end{align*}}
  In line 3, $\lesssim$ denotes inequality up to a universal constant (in our case, $e^{1/2}$). In line 5, we are using the fact that $\inner{x}{y}^k = \inner{x^{\otimes k}}{y^{\otimes k}}$, where $\inner{\cdot}{\cdot}$ denotes the Frobenius inner product for tensor arguments. In the penultimate line we are using the fact that the moments match up to order $\ell-1$.
\end{proof}

\section{Vectorized Notation for Moments}
\label{sec:custom-notation}
Here we elaborate on the notation in Definition \ref{def:decoupled-curve-moments}. We use $A^s$, $A^l$, and $A^r$ to denote the following three $(M+1, M)$-shaped matrix constants (here $M=3$ for illustrative purposes):
\begin{equation}
A^s = \begin{bmatrix}1&0&0\\1&1&0\\0&1&1\\0&0&1\end{bmatrix} \; A^l = \begin{bmatrix}1&0&0\\0&1&0\\0&0&1\\0&0&0\end{bmatrix}\; A^r = \begin{bmatrix}0&0&0\\1&0&0\\0&1&0\\0&0&1\end{bmatrix}\
\label{eqn:A-matrices}
\end{equation}
($s$ stands for sum, $l$ for left, and $r$ for right, since the rows of $(A^s)^TC$ are $c_{i-1} + c_i$, the rows of $(A^l)^TC$ are $c_{i-1}$, and the rows of $(A^r)^TC$ are $c_i$). The $(M+1,M+1)$-shaped matrix $\ol{A}(p)$ is given by
\begin{equation}
  \ol{A}(p) \defeq \frac{1}{6}\left(A^s\diag(p)(A^s)^T + A^l\diag(p)(A^l)^T + A^r\diag(p)(A^r)^T\right),
  \label{eqn:A-bar}
\end{equation}
We use $\alpha(p)$ to denote the symmetric $(M+1,M+1,M+1)$-shaped tensor whose $mno$ entry is given by
\begin{equation}
  [\alpha(p)]_{mno} = \sum_{i=1}^M p_i\left(\frac{1}{12}A^s_{mi}A^s_{ni}A^s_{oi} + \frac{1}{6}A^l_{mi}A^l_{ni}A^l_{oi} + \frac{1}{6}A^r_{mi}A^r_{ni}A^r_{oi}\right).
\label{eqn:third-mom-alpha}
\end{equation}

\section{Proof of Proposition \ref{prop:weak-gradient-condition}}
\label{sec:weak-gradient-condition-proofs}
In this section, we provide a full proof of Proposition \ref{prop:weak-gradient-condition}. We prove (\ref{eqn:weak-C-condition}) in \S\ref{sec:C-from-pstar} and (\ref{eqn:weak-p-condition}) in \S\ref{sec:p-from-Cstar}.

\subsection{Recovery of $C$ from $p^*$ (Proof of Equation (\ref{eqn:weak-C-condition}))}
\label{sec:C-from-pstar}

Let $\delta\in\bbR^{(M+1)\times d}$ be sufficiently small and $C = C^* +\delta$ be some other curve in a neighborhood of $C^*$. We wish to show that for $C$ sufficiently close to $C^*$, we have
\[\inner{\grad_C L_{m_3^*}(C, p^*)}{C^*-C} < 0\]
Using (\ref{eqn:loss-jacs-applied}), the left hand side of (\ref{eqn:weak-C-condition}) becomes
{\small \begin{equation}
   2\left\langle(C^{\otimes 3}-{C^*}^{\otimes 3})\star\alpha(p^*), (C\otimes C\otimes C^* + C\otimes C^* \otimes C + C^*\otimes C\otimes C - 3C^{\otimes 3})\star\alpha(p^*)\right\rangle
  \label{eqn:L3-CstarminusC-expanded}
\end{equation}}
We perform a Taylor expansion, replacing $C$ with $C^* + \delta$, expanding the nonlinear terms and only keeping terms with one $\delta$. Then (\ref{eqn:L3-CstarminusC-expanded}) has first order approximation given by
\begin{equation}
  -2\left\|(C^*\otimes C^* \otimes \delta + C^*\otimes\delta\otimes C^* + \delta\otimes C^*\otimes C^*)\star\alpha(p^*) \right\|_F^2.
  \label{eqn:L3-CstarminusC-taylor}
\end{equation}
Let $g(\delta)$ be the term inside the norm: 
\begin{equation}
  g(\delta) \defeq (C^*\otimes C^* \otimes \delta + C^*\otimes\delta\otimes C^* + \delta\otimes C^*\otimes C^*)\star\alpha(p^*).
  \label{eqn:g-defn}
\end{equation}
This is a linear operator from $\bbR^{(M+1)\times d}\to\bbR^{d\times d\times d}$. Therefore, to show (\ref{eqn:weak-C-condition}), it suffices to show that the kernel of $g$ is trivial.

Let $\Sym:\bbR^{d\times d\times d}\to \bbR^{d\times d\times d}$ denote the symmetrization operator on three-way tensors (see (\ref{eqn:sym-def})). Then by the definition of the $\star$ operation and the symmetry of $\alpha(p^*)$, we can rewrite (\ref{eqn:g-defn}) as
\begin{equation}
  g(\delta) =  \Sym\left(3(\delta\otimes C^*\otimes C^*)\star\alpha(p^*)\right).
  \label{eqn:g-with-sym}
\end{equation}
Let $\tilde{g}$ denote the term inside the $\Sym$:
\begin{equation}
  \tilde{g}(\delta) \defeq 3(\delta\otimes C^*\otimes C^*)\star\alpha(p^*).
  \label{eqn:gtilde}
\end{equation}
To show that $g(\delta)$ has trivial kernel, it suffices to show that $(a)$ $\tilde{g}$ has trivial kernel and (b) $\Imag(\tilde{g})\cap \Ker(\Sym) = \{0\}$. We prove (a) in Lemma \ref{lem:gtilde-trivial-kernel} and (b) in Lemma \ref{lem:kernels-have-trivial-intersection}.

\begin{lem}
Let $M = d$. Let $C^* = [c^*_0,\dots, c^*_M]^T$ and assume that $c^*_0,\dots, c^*_M\in\bbR^d$ are in generic position so that any collection of $M$ of the $c^*_i$'s is linearly independent. Let $\tilde{g}$ be the linear operator given in (\ref{eqn:gtilde}). Then $\tilde{g}$ has trivial kernel.
\label{lem:gtilde-trivial-kernel}
\end{lem}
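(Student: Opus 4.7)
The strategy is to reduce the kernel condition on $\tilde g$ to a tridiagonal matrix equation and then verify injectivity in two independent steps. Unpacking the contraction,
\[
\tilde g(\delta)_{jkl} = 3\sum_{m,n,o=0}^M \delta_{mj}\, C^*_{nk}\, C^*_{ol}\, \alpha(p^*)_{mno}.
\]
Fix the first coordinate $j$, let $x \defeq \delta^{(j)} \in \bbR^{M+1}$ be the corresponding column of $\delta$, and define
\[
B(x)_{no} \defeq \sum_{m=0}^M x_m\, \alpha(p^*)_{mno},
\]
which is symmetric by the total symmetry of $\alpha(p^*)$. Then $\tilde g(\delta)=0$ is equivalent to $(C^*)^T B(x)\, C^* = 0$ for every column $x$ of $\delta$. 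Hence the lemma reduces to showing (i) for generic $C^*$, the linear map $B \mapsto (C^*)^T B\, C^*$ on symmetric tridiagonal $(M+1)\times (M+1)$ matrices is injective, and (ii) the linear map $x \mapsto B(x)$ on $\bbR^{M+1}$ is injective.

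I would then exploit the sparsity of $\alpha(p^*)$. The formula in Proposition \ref{prop:moments} shows $\alpha(p^*)_{mno}$ vanishes unless $\{m,n,o\} \subseteq \{i-1,i\}$ for some segment $i$, so $B(x)$ is supported on the band $|n-o|\leq 1$. A direct computation gives the off-diagonal entries
\[
B(x)_{n,n+1} = \frac{p^*_{n+1}}{12}\bigl(x_n + x_{n+1}\bigr),
\]
while the diagonal entries $B(x)_{nn}$ are linear combinations of $\{x_{n-1}, x_n, x_{n+1}\}$ with strictly positive coefficients in $p^*_n, p^*_{n+1}$. For (ii), $B(x)=0$ forces $x_{n+1}=-x_n$ via the off-diagonals (using $p^*_{n+1}>0$), so $x_n = (-1)^n x_0$; feeding this into the $n=0$ diagonal equation produces a nonzero scalar multiple of $x_0$, yielding $x=0$.

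For (i), the source has dimension $2M+1$ and the target dimension $M(M+1)/2$, with $2M+1 \leq M(M+1)/2$ precisely when $M\geq 4$ as assumed. Injectivity is equivalent to the linear independence of the $2M+1$ symmetric matrices
\[
\bigl\{c^*_n(c^*_n)^T\bigr\}_{n=0}^{M} \;\cup\; \bigl\{c^*_n(c^*_{n+1})^T + c^*_{n+1}(c^*_n)^T\bigr\}_{n=0}^{M-1}.
\]
The corresponding Gramian determinant is a polynomial in the entries of $C^*$; to show it does not vanish identically, I would exhibit an explicit witness compatible with the genericity hypothesis, for instance $c^*_0,\dots,c^*_{M-1}$ equal to the standard basis of $\bbR^M$ together with $c^*_M = (1,\dots,1)^T$. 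For this configuration the $2M+1$ matrices separate by a short cascade on the entries: the entries $(i,j)$ with $|i-j|\geq 2$ and $i,j<M$ force the coefficient of $\mathbf 1\mathbf 1^T$ to vanish, the entries $(M,j)$ with $j<M-1$ force the coefficient of $e_M\mathbf 1^T+\mathbf 1 e_M^T$ to vanish, and the remaining coefficients separate by support into diagonal and adjacent-off-diagonal contributions. Nonvanishing of a polynomial off a measure-zero set then gives generic injectivity.

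The main obstacle I expect is (i): the dimension count is tight at $M=4$ and the linear independence uses the full strength of the hypothesis that every $M$ of the $c^*_i$ are linearly independent; assembling a clean witness configuration (and verifying the cascade above) is where the argument does real work. The sparsity reduction yielding tridiagonal $B(x)$ and the elementary cascade proving $B(x)=0 \Rightarrow x=0$ are routine by comparison.
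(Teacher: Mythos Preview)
Your factorization $\tilde g(\delta)_{j\cdot\cdot} = 3(C^*)^T B(\delta^{(j)}) C^*$ is correct, and splitting injectivity into (i) and (ii) is a genuinely different organization than the paper's. The paper instead computes $\tilde g(E^{m,j})$ directly, observes it has the block form $[0,\dots,F^m,\dots,0]$ with $F^m$ in slot $j$, and reduces to showing the matrices $\{F^m\}_{m=0}^M$ are linearly independent; in your language $F^m = 3(C^*)^T \alpha(p^*)_{m,\cdot,\cdot} C^*$, so the paper proves injectivity of the \emph{composite} $x\mapsto (C^*)^TB(x)C^*$ without factoring. Its argument (Lemma~\ref{lem:FmGm-indep}) applies carefully chosen test vectors $x_j\perp c^*_i$ for $i\notin\{j-1,j\}$ to first force the cross-term coefficients $k_{i-1}+k_i$ to vanish, then reduces to linear independence of $\{(c_i^*)^{\otimes 2}\}$. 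Your step (ii), recovering $x_{n+1}=-x_n$ from the off-diagonals of $B(x)$ and then killing $x_0$ via the $n=0$ diagonal, plays the role of the paper's first reduction but is carried out purely at the level of $\alpha(p^*)$ rather than after pushing through $C^*$; separating the combinatorics of $\alpha$ from the geometry of $C^*$ is arguably cleaner.

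There is one genuine gap. Your step (i) is argued by witness-plus-polynomial, which yields the claim for \emph{almost all} $C^*$, not for all $C^*$ satisfying the stated hypothesis that every $M$ of the $c^*_i$ are linearly independent; the lemma as phrased demands the latter. It happens that (i) \emph{does} hold under the hypothesis and can be proved directly: since the $M+1$ vectors satisfy a unique relation $\sum_i\lambda_ic^*_i=0$ with all $\lambda_i\neq 0$, the image of $C^*$ in $\bbR^{M+1}$ is exactly $\lambda^\perp$, so $(C^*)^TBC^*=0$ forces $B=\lambda v^T+v\lambda^T$ for some $v$; tridiagonality then gives $v_i/\lambda_i=-v_j/\lambda_j$ whenever $|i-j|\geq 2$, which for $M\geq 4$ cascades to $v=0$. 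Alternatively, since the downstream Theorem~\ref{thm:strong-gradient-condition} is stated only for almost all $C^*$ anyway (the companion Lemma~\ref{lem:kernels-have-trivial-intersection} already carries that qualifier), your weaker conclusion suffices for the application. But as a proof of the lemma \emph{as stated}, you should either supply a direct argument for (i) under the hypothesis or explicitly weaken the conclusion.
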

\begin{proof}
Note that $\tilde{g}(\delta)$ is a linear operator from $\bbR^{(M+1)\times d}\to\bbR^{d\times d\times d}$. If we think of this as a matrix operating on the flattened spaces, then with our assumption of $M = d$, the matrix representation of $\tilde{g}$ is a tall and thin matrix. To show that $\tilde{g}$ has trivial kernel, it suffices to show that the $(M+1)d$ ``columns'' of $\tilde{g}$ are linearly independent.

Let $E^{m, j}$ be the standard basis matrix in $\bbR^{(M+1)\times d}$ with all zeros except a $1$ in the $m, j$ entry; note that $m$ ranges from $0$ to $M$ while $j$ ranges from $1$ to $d$, i.e. we zero-index the rows but one-index the columns. Then the $(m, j)$ ``column'' of $\tilde{g}$ is given by $\tilde{g}(E^{m, j})$. Let $E^{m,j}_i$ denote the $i$th row of $E^{m,j}$, which is equal to $e_j$ (the $j$th standard basis vector in $\bbR^d$) if $i = m$ and zero otherwise. Then 
\begin{align*}
  \tilde{g}(E^{m, j}) &= \sum_{i=1}^M p_i^*\Bigg[\frac{1}{4} (E_{i-1}^{m, j} + E_i^{m, j})\otimes (c^*_{i-1} + c^*_i)\otimes (c^*_{i-1} + c^*_i) \\
  &\hspace{6em}+ \frac{1}{2}E_{i-1}^{m, j}\otimes c^*_{i-1}\otimes c^*_{i-1} + \frac{1}{2}E_i^{m, j}\otimes c^*_i \otimes c^*_i\Bigg]\\
  &= \frac{1}{4}p_m^* e_j\otimes (c^*_{m-1} + c^*_m) \otimes (c^*_{m-1} + c^*_m) + \frac{1}{2}p_m^* e_j\otimes c^*_m \otimes c^*_m \\
  &\hspace{2em} + \frac{1}{4}p_{m+1}^*e_j\otimes (c^*_m + c^*_{m+1})\otimes (c^*_m+c^*_{m+1}) + \frac{1}{2}p_{m+1}^*e_j\otimes c^*_m\otimes c^*_m\\
  &= \left[0, \dots, 0, F^m, 0, \dots, 0\right].
\end{align*}
In the last line, we are notating a $(M+1, M+1, M+1)$-shaped tensor as a size $M+1$ vector consisting of size $(M+1, M+1)$ matrices, where the matrix $F^m$ is in the $j$th slot of the vector and is given by 
{\small\begin{equation}
  F^m = \begin{cases}
    \frac{1}{4}p_1^*(c^*_0 + c^*_1)^{\otimes 2} + \frac{1}{2}p_1^*{c^*_0}^{\otimes 2} & \text{if } m = 0\\
    \frac{1}{4}p_m^*(c^*_{m-1}+c^*_m)^{\otimes 2} + \frac{1}{4}p_{m+1}^*(c^*_m+c^*_{m+1})^{\otimes 2} + \frac{1}{2}p_m^*{c^*_m}^{\otimes 2} + \frac{1}{2}p_{m+1}^*{c^*_m}^{\otimes 2} &\text{if }1\leq m\leq M-1\\
    \frac{1}{4}p_M^*(c^*_{M-1} + c^*_M)^{\otimes 2} + \frac{1}{2}p_M^*{c^*_M}^{\otimes 2} &\text{if }m = M
  \end{cases}
  \label{eqn:Fm}
\end{equation}}
From the position of the nonzero entries, it is clear that $\tilde{g}(E^{m, j})$ are linearly independent for fixed $m$ and varying $j$. It remains to show that there is also linear independence for fixed $j$ and varying $m$; in other words, we need to show that $\{F^m\}_{m=0}^M$ is a linearly independent collection of matrices.

To show this, we wish to show that if $T\defeq \sum_{m=0}^M k_mF^m = 0$, then $k_m = 0$ for all $m = 0, \dots, M$. We can reindex the sum to write 
\begin{equation}
  T = \sum_{i=1}^M\frac{k_{i-1} + k_i}{4}p_i(c_{i-1}+c_i)^{\otimes 2} + \frac{k_{i-1}}{2}p_ic_{i-1}^{\otimes 2} +\frac{k_i}{2}p_ic_i^{\otimes 2}
  \label{eqn:T-reindexed}
\end{equation}
For $j=2,\dots, M-1$, let $x_j$ be a vector in $\mathbb{R}^d$ such that $x_j\perp c_i$ for all $i\neq j-1, j$. Let $Tx_j$ denote the tensor contraction along the last index of $T$. Then $Tx_j$ kills a large number of terms and we have 
\begin{align*}
    Tx_j&= \frac{k_{j-2}+k_{j-1}}{4}p_{j-1}\langle c_{j-1}, x_j\rangle (c_{j-2}+c_{j-1}) \\
    &\hspace{3em}+ \frac{k_{j-1}+k_{j}}{4}p_{j}\langle c_{j-1}+c_{j}, x_j\rangle (c_{j-1}+c_{j})\\
    &\hspace{3em}+ \frac{k_{j}+k_{j+1}}{4}p_{j+1}\langle c_{j}, x_j\rangle (c_{j}+c_{j+1})\\
    &\hspace{3em}+ \frac{k_{j-1}}{2}p_j\langle c_{j-1}, x\rangle c_{j-1} + \frac{k_j}{2}p_{j+1}\langle c_j, x\rangle c_j\\
    &\hspace{3em}+ \frac{k_{j-1}}{2}p_{j-1}\langle c_{j-1}, x\rangle c_{j-1} + \frac{k_j}{2}p_{j}\langle c_j, x\rangle c_j
\end{align*}
We can rewrite this as $Tx_j = \alpha_{j-2}c_{j-2} + \alpha_{j-1}c_{j-1} + \alpha_{j}c_{j} + \alpha_{j+1}c_{j+1}$, where 
\begin{align*}
\alpha_{j-2} &= \frac{k_{j-2}+k_{j-1}}{4}p_{j-1}\langle c_{j-1}, x_j\rangle\\
\alpha_{j-1} &= \frac{k_{j-2}+k_{j-1}}{4}p_{j-1}\langle c_{j-1}, x_j\rangle + \frac{k_{j-1}+k_{j}}{4}p_{j}\langle c_{j-1}+c_{j}, x_j\rangle \\
&\hspace{3em}+ \frac{k_{j-1}}{2}p_j\langle c_{j-1}, x\rangle + \frac{k_{j-1}}{2}p_{j-1}\langle c_{j-1}, x\rangle\\
\alpha_{j} &= \frac{k_{j-1}+k_{j}}{4}p_{j}\langle c_{j-1}+c_{j}, x_j\rangle + \frac{k_{j}+k_{j+1}}{4}p_{j+1}\langle c_{j}, x_j\rangle  + \frac{k_j}{2}p_{j+1}\langle c_j, x\rangle + \frac{k_j}{2}p_{j}\langle c_j, x\rangle\\
\alpha_{j+1} &= \frac{k_{j}+k_{j+1}}{4}p_{j+1}\langle c_{j}, x_j\rangle
\end{align*}
By our assumption of genericness, $c_{j-2}, \dots, c_{j+1}$ are linearly independent. Note also that $x_j$ is not in the span of $c_{j-2}$ and $c_{j+1}$ by our definition of $x_j$. Therefore, if the linear combination of the four terms above is to be zero, that means we must have $\alpha_{j-2}, \alpha_{j+1} = 0$. Our genericness assumption also implies that $\langle c_{j-1}, x_j\rangle$ and $\langle c_j, x_j\rangle$ are nonzero. Note also that all $p_i$ are nonzero (positive, in fact). Therefore, if $\alpha_{j-2}, \alpha_{j+1} = 0$, then it must follow that $k_{j-2} + k_{j-1} = 0$ and $k_{j} + k_{j+1} = 0$. Repeat this for $j = 2, \dots, M-1$. Then if $\sum_{m=0}^M k_mF^m = 0$, then (\ref{eqn:T-reindexed}) becomes
\[\sum_{i=1}^M \left(\frac{k_{i-1}}{2}p_ic_{i-1}^{\otimes 2} + \frac{k_i}{2}p_ic_i^{\otimes 2}\right) = 0.\]
We have thus reduced our problem to whether $c_0^{\otimes 2},\dots, c_M^{\otimes 2}$ is a linearly independent set. 

Consider the sum
\begin{equation}
\sum_{i=0}^M \gamma_ic_i^{\otimes 2} 
\end{equation}
where $\gamma_i$ are scalars. We can rewrite this as 
\begin{equation}
\gamma_0c_0^{\otimes 2} + \sum_{i=1}^M\gamma_ic_i^{\otimes 2}.
\label{eqn:gamma-sum-decomp}
\end{equation}
Suppose for the sake of contradiction that (\ref{eqn:gamma-sum-decomp}) is equal to zero and not all $\gamma_i$ are equal to zero; assume without loss of generality that $\gamma_0$ is nonzero. Since $c_1,\dots,c_M$ are linearly independent, the summation is rank $r$, where $r$ is the number of nonzero coefficients $\gamma_1,\dots,\gamma_M$, so (\ref{eqn:gamma-sum-decomp}) is the sum of a rank 1 and rank $r$ matrix. If $r>1$, then this sum cannot possibly be zero. If $r=1$, suppose without loss of generality that $\gamma_1$ is the only nonzero coefficient among $\gamma_1,\dots,\gamma_M$, so that $\gamma_0c_0^{\otimes 2} + \gamma_1 c_1^{\otimes 2} = 0$. Then $c_0$ must be parallel to $c_1$, but this is not possible by our assumption that $c_i$ are generic.
\end{proof}

Before we show  $\Imag(\tilde{g})\cap \Ker(\Sym) = \{0\}$, we first need the following result, which we have adapted from Lemma 5.6 in \cite{determinant}.
\begin{lem}
\label{lem:determinant}
Let $\calM(x)$ be a matrix whose entries are analytic functions of a variable $x\in U\subset\bbR^n$ for some connected open domain $U$. Let $r$ be the rank of $\calM(x_0)$ for some particular $x_0\in \bbR^n$. Then $\rank(\calM(x))\geq r$ for almost all $x\in U$.
\end{lem}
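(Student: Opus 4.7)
The plan is to reduce the rank lower bound to a statement about the zero set of a single analytic function, namely the determinant of a well-chosen $r \times r$ minor of $\calM(x)$.

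First I would exploit the hypothesis $\rank(\calM(x_0)) = r$ to extract witnesses. By definition of the rank, there exist row indices $i_1 < \dots < i_r$ and column indices $j_1 < \dots < j_r$ such that the $r \times r$ submatrix $N(x)$ of $\calM(x)$ formed by these rows and columns satisfies $\det N(x_0) \neq 0$. The key observation is that $\det N(x)$ is a polynomial in the entries of $N(x)$, and each such entry is by hypothesis an analytic function on $U$; hence $\det N:U\to\bbR$ is itself analytic, and by construction it is not identically zero (it is nonzero at $x_0$).

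Next I would invoke the standard fact that the zero set of a nonzero real-analytic function on a connected open domain in $\bbR^n$ has Lebesgue measure zero. (One way to see this: analyticity lets one reduce locally to the one-variable Weierstrass preparation case, where the zero set is a finite union of graphs of analytic functions of $n-1$ variables and hence has measure zero; the global statement follows by a standard covering and connectedness argument, since the set where $\det N$ vanishes identically is both open and closed in $U$.) Applied here, the set
\begin{equation}
Z \defeq \{x\in U : \det N(x) = 0\}
\end{equation}
has Lebesgue measure zero in $U$.

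Finally I would conclude: for any $x\in U\setminus Z$, the submatrix $N(x)$ has nonzero determinant and thus rank exactly $r$, which forces $\rank(\calM(x))\geq r$. Since $U\setminus Z$ has full measure in $U$, this gives the desired almost-everywhere lower bound on the rank. The only nontrivial ingredient is the analytic zero-set measure-zero statement; everything else is an elementary consequence of the definition of rank via nonvanishing minors.
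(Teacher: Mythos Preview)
Your proposal is correct and follows essentially the same argument as the paper: pick an $r\times r$ minor that is nonvanishing at $x_0$, observe that its determinant is a real-analytic function on $U$ that is not identically zero, and invoke the fact that the zero set of such a function has Lebesgue measure zero. The only difference is that you sketch a justification for the measure-zero fact, whereas the paper simply cites it.
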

\begin{proof}
Note that $\rank(\calM(x))<r$ if and only if the determinant of every $(r, r)$-shaped minor of $\calM(x)$ vanishes. Since the rank of $\calM(x_0)$ is $r$, there exists some $(r,r)$-shaped minor $\hat{\calM}(x)$ such that $\det(\hat{\calM}(x_0))\neq 0$. Note that $\det(\hat{\calM}(x))$ is an analytic function of $x$, and the witness $x_0$ shows that $\det(\hat{\calM}(x))$ is not identically zero. An analytic function that is not identically zero has a measure zero vanishing set (see \cite{analytic-zeros}). Therefore, $\det(\hat{\calM}(x))$ has a vanishing set of measure zero, and hence $\rank(\calM(x))$ is $<r$ on a measure zero set.
\end{proof}
The utility of this lemma is that if a matrix $\calM(x)$ has entries that are analytic and we find a single witness $x_0$ such that $\calM(x_0)$ is full rank, then we know $\calM(x)$ is full rank for almost every $x$. We use this lemma in the proof of the following result.

\begin{lem}
Let $d \geq 4$ and $M=d$. Then for almost all $C^*\in\bbR^{(M+1)\times d}$, the image of $\tilde{g}$ and the kernel of $\Sym$ are linear subspaces that intersect only at $0$.
\label{lem:kernels-have-trivial-intersection}
\end{lem}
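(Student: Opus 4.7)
The plan is to reduce the claim to showing that the linear map $g = \Sym\circ\tilde g$ is injective for almost all $C^*$, and then apply Lemma~\ref{lem:determinant} with a single explicit witness. Indeed, by the identity $g(\delta)=\Sym(\tilde g(\delta))$ derived at (\ref{eqn:g-with-sym}), injectivity of $g$ immediately implies both the injectivity of $\tilde g$ and the triviality of $\Imag(\tilde g)\cap\Ker(\Sym)$, since any nonzero element of that intersection would give a nonzero element of $\Ker(g)$.

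Fix bases for the domain $\bbR^{(M+1)\times d}$ and for the codomain (the space of symmetric three-tensors on $\bbR^d$, of dimension $\binom{d+2}{3}$); the matrix of $g$ then has entries that are analytic in $C^*$ (polynomial in the coordinates of $C^*$ and of $p^*$, where $p^*$ is itself analytic in $C^*$ through the proportional segment length formula). A necessary dimension count gives $\binom{d+2}{3}\geq(d+1)d$ exactly when $d\geq 4$, matching the hypothesis. Thus by Lemma~\ref{lem:determinant} it suffices to exhibit one witness $C^*_0$ at which the matrix of $g$ attains its maximal column rank $(d+1)d$.

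As my witness I would take $c^*_0 = 0$ and $c^*_i = e_i$ for $i=1,\dots,d$, with $p^*$ the corresponding proportional segment lengths. Under this choice each $F^m$ from (\ref{eqn:Fm}) is tridiagonal in the basis $\{e_1,\dots,e_d\}$: $(F^m)_{kl}$ is nonzero only for $k,l\in\{m-1,m,m+1\}\cap\{1,\dots,d\}$. Writing $g(\delta)=\sum_{m,j}\delta_{mj}\Sym(e_j\otimes F^m)$ and expanding in the symmetric tensor basis $\{\Sym(e_a\otimes e_b\otimes e_c):1\leq a\leq b\leq c\leq d\}$, each symmetric triple yields one scalar equation in the entries $\delta_{mj}$. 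For each extreme column $j\in\{1,d\}$, the triples $\{a,a,j\}$ and $\{a,a+1,j\}$ with $|a-j|\geq 2$, together with $\{j,j,j\}$ and the boundary triple, produce equations involving only the column-$j$ variables; the consecutive-pair relations $\delta_{a,j}+\delta_{a+1,j}=0$ chain together and combine with the boundary equation to force $\delta_{m,j}=0$ for all $m$ except possibly one, leaving a single residual free scalar per extreme column.

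The hard part is then verifying that the remaining coupling equations force these residual scalars---together with all the unknowns in the middle columns---to vanish. In the tight case $d=4$ (where domain and codomain both have dimension $20$, so that $g$ is forced to be bijective) I would carry out the computation directly: after solving columns $1$ and $4$ as above, the triples $\{2,2,3\}$ and $\{2,3,3\}$ each express the column-$1$ residual as a rational multiple of the column-$4$ residual, but the two multipliers turn out to be \emph{distinct}, forcing both residuals to vanish. For $d>4$ the codomain strictly dominates the domain, and an analogous column-by-column chain argument, supported throughout by the tridiagonality of the $F^m$, extends the $d=4$ computation to yield $\delta=0$ in general.
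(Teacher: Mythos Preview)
Your overall strategy matches the paper's exactly: reduce the statement to the injectivity of $g=\Sym\circ\tilde g$, note that the matrix of $g$ has entries analytic in $C^*$, invoke Lemma~\ref{lem:determinant} to reduce to a single witness, and then choose a witness with a sparse ``tridiagonal'' structure so that the verification becomes combinatorial. Your dimension count $\binom{d+2}{3}\geq (d+1)d\iff d\geq 4$ is correct and nicely explains the role of the hypothesis.

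Where you diverge is in the choice of witness and, more importantly, in how you verify it. The paper's witness (equation~(\ref{eqn:CM})) is chosen so that all proportional segment lengths equal $1/M$, and the verification is done by \emph{induction on $M$}: the base case $M=4$ is checked numerically (the smallest singular value of the flattened matrix is reported), and the inductive step is carried out by relating the $(1{:}M,1{:}M,1{:}M)$ subtensors of the $w_{M+1}^{m,j}$ to linear combinations of the $w_M^{m,j}$, which reduces the problem for $M+1$ to the inductive hypothesis plus an explicit check on a few boundary slices (equation~(\ref{eqn:slices})). Your witness $c_0^*=0$, $c_i^*=e_i$ is structurally similar, and your tridiagonality observation for the $F^m$ is correct.

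The gap in your proposal is the witness verification itself. You correctly extract the consecutive-pair relations $\delta_{a,1}+\delta_{a+1,1}=0$ from the triples $\{a,a{+}1,1\}$ with $a\geq 3$, but the ``hard part'' is only asserted, not done. In particular, your claim that the triples $\{2,2,3\}$ and $\{2,3,3\}$ ``express the column-$1$ residual as a rational multiple of the column-$4$ residual'' is not right as stated: the equation coming from $\{2,2,3\}$ involves only the variables $\delta_{m,2}$ and $\delta_{m,3}$ (since $\Sym(e_j\otimes F^m)_{2,2,3}$ vanishes unless $j\in\{2,3\}$), so it cannot directly relate the column-$1$ and column-$4$ residuals without first solving for the middle columns. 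Your plan for $d>4$ is then just the phrase ``an analogous column-by-column chain argument\dots extends,'' which is exactly where the real work lies. The paper's induction handles this cleanly because the passage from $M$ to $M+1$ localizes the new unknowns to a boundary that can be analyzed explicitly; your direct approach would have to untangle the coupling among all middle columns simultaneously, and you have not shown how.
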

\begin{proof}
In the proof of Lemma \ref{lem:gtilde-trivial-kernel}, we showed that $\left\{\tilde{g}(E^{m, j})\right\}_{0\leq m\leq M, 1\leq j\leq d}$ forms a basis for $\Imag(\tilde{g})$. Since the action of $\tilde{g}$ depends on $C^*$, in this proof we will use the notation $\tilde{g}_{C^*}$ to denote the $\tilde{g}$ operator induced by a particular choice of $C^*$. For coefficients $\beta_{m, j}$, we wish to show that if 
\begin{equation}
  \Sym\left(\sum_{\substack{0\leq m\leq M \\ 1\leq j\leq d}}\beta_{m, j} \tilde{g}_{C^*}(E^{m, j})\right) = 0,
\end{equation}
then all the coefficients must be zero. By linearity of the $\Sym$ operator, it suffices to show that 
\begin{equation}
  \left\{\Sym(\tilde{g}_{C^*}(E^{m, j}))\right\}_{0\leq m\leq M, 1\leq j\leq d}
  \label{eqn:Sym-gtilde-basis}
\end{equation} 
forms a linearly independent set of $(M,M,M)$-shaped tensors.

Let $\calM(C^*)$ denote the $(M^3, (M+1)M)$-shaped matrix of column-stacked flattenings of the tensors in (\ref{eqn:Sym-gtilde-basis}). Then we wish to show that this matrix has full column rank for almost all $C^*$. The entries of $\calM(C^*)$ are either zeros or linear combinations of the entries of $F^m$ as defined in (\ref{eqn:Fm}). Recall that the values $p_i^*$ are a function of $C^*$ in the following way:
\begin{equation}
  \label{eqn:p-from-C}
  p_i^* =  p_i^*(C^*) = \frac{\|c_i^* - c_{i-1}^*\|}{\sum_{j=1}^M \|c_j^* - c_{j-1}^*\|}.
\end{equation}
On the connected open domain $U \defeq \{C^*\in\bbR^{(M+1)\times d}: c^*_{j} - c^*_{j-1}\neq 0, \;j=1,\dots M\}$, this is an analytic function of the entries of $C^*$. Since the entries of terms of the form $(c^*_{m-1} + c^*_{m})^{\otimes 2}$ and ${c^*_m}^{\otimes 2}$ are also analytic, it follows from the form of $F^m$ that the entries of $\calM(C^*)$ are analytic functions of the entries of $C^*$. By Lemma \ref{lem:determinant}, it suffices then to find a single witness $C^*$ for each dimension $M$ where $\calM(C^*)$ has full rank.

For $M\geq 4$, we define the following witness $C^M$ (for notational clarity, we show the case $M=4$ here, with $C^M$ for greater $M$ being defined in the obvious way):
\begin{equation}
  \label{eqn:CM}
  C^M\defeq \begin{bmatrix}\frac{1}{\sqrt{2}} & 0 & 0 \\ 0 & \frac{1}{\sqrt{2}} & 0 \\ 0 & 0 & \frac{1}{\sqrt{2}} \\ 0 & 0 & \frac{1}{\sqrt{2}} + 1\end{bmatrix}.
\end{equation}
We choose this particular $C^M$ because it induces matrices $F^m$ that are sparse and have a particular recursive structure, and the proportional segment lengths of $C^M$ are all $1/M$. Define
\begin{align}
  \label{eqn:WM}
  \begin{split}
  w_M^{m, j} &\defeq M\Sym(\tilde{g}_{C^M}(E^{m, j}))\\
  W^M &= \{w_M^{m, j}\}_{0\leq m\leq M, 1\leq j\leq M}.
  \end{split}
\end{align}
To show that $C^M$ is a witness for a full rank $\calM(C^*)$, it suffices to show that for all $M\geq 4$, $W^M$ is a linearly independent collection of tensors. We proceed by induction. The base case $M=4$ can be verified by straightforward computation. We verify this numerically by computing the singular value decomposition of the matrix $\calM(C^4)$ and observing that the smallest singular value is positive; the smallest singular value is approximately $3.29024\times 10^{-4}$. 

Now suppose $W^M$ is a linearly independent collection; we wish to show that $W^{M+1}$ also is a linearly independent collection. Note that $w_{M+1}^{m, j}$ is an $(M+1, M+1, M+1)$-shaped tensor. Let $\hat{w}_{M+1}^{m, j}$ denote the $(M, M, M)$-shaped ``minor'' of $w_{M+1}^{m, j}$ given by the $(1:M, 1:M, 1:M)$ subtensor of $w_M^{m, j}$ (here we are $1$-indexing). By straightforward explicit computation of the entries of $M\Sym(\tilde{g}_{C^{M}}(E^{m, j}))$ and $(M+1)\Sym(\tilde{g}_{C^{M+1}}(E^{m, j}))$, it can be shown that the minors of $w_{M+1}^{m, j}$ have the following properties:
\begin{enumerate}[(a)]
  \item For $0\leq m\leq M-2$, $1\leq j\leq M$: $\hat{w}_{M+1}^{m, j} = w_M^{m, j}$.
  \item For $1\leq j\leq M$: $\displaystyle \hat{w}_{M+1}^{M-1, j} = w_M^{M-1, j} - \left(\frac{1}{\sqrt{2}} - \frac{1}{4}\right)w_M^{M, j}$.
  \item For $1\leq j \leq M$: $\displaystyle  \hat{w}_{M+1}^{M, j} = \frac{1}{4(1+\sqrt{2})^2}w_M^{M, j}$.
  \item For $1\leq j\leq M$: $\hat{w}_{M+1}^{M+1, j} = 0$.
  \item For $0\leq m \leq M+1$: $\hat{w}_{M+1}^{m, M+1} = 0$.
\end{enumerate}
Now given coefficients $\alpha_{m, j}$, suppose
\begin{equation}
  \sum_{\substack{0\leq m \leq M+1 \\ 1\leq j \leq M+1}}\alpha_{m, j}w_{M+1}^{m, j} = 0.
  \label{eqn:w-lc}
\end{equation}
To complete the inductive step, it suffices to show that all coefficients $\alpha_{m, j}$ must be zero. Observe that the $(1:M, 1:M, 1:M)$ subtensor of the linear combination above must also be zero, and hence using the properties above we have 
\begin{align*}
  0 &= \sum_{\substack{0\leq m \leq M+1 \\ 1\leq j \leq M+1}}\alpha_{m, j}\hat{w}_{M+1}^{m, j} \\
  &= \sum_{\substack{0\leq m \leq M-2 \\ 1\leq j \leq M}} \alpha_{m, j}\hat{w}_{M+1}^{m, j} + \sum_{1\leq j \leq M}\alpha_{M-1, j}\hat{w}_{M+1}^{M-1, j} + \sum_{1\leq j \leq M}\alpha_{M, j}\hat{w}_{M+1}^{M, j} \\
  &\hspace{3em}+ \sum_{1\leq j\leq M}\alpha_{M+1, j}\hat{w}_{M+1}^{M+1, j} + \sum_{0\leq m\leq M+1}\alpha_{m, M+1}\hat{w}_{M+1}^{m, M+1}\\
  &= \sum_{\substack{0\leq m \leq M-2 \\ 1\leq j \leq M}} \alpha_{m, j}w_{M}^{m, j} + \sum_{1\leq j \leq M}\alpha_{M-1, j}\left(w_M^{M-1, j} - \left(\frac{1}{\sqrt{2}} - \frac{1}{4}\right)w_M^{M, j}\right)\\
  &\hspace{3em}+ \sum_{1\leq j \leq M}\alpha_{M, j}\frac{1}{4(1+\sqrt{2})^2}w_M^{M, j}\\
  &=\sum_{\substack{0\leq m \leq M-2 \\ 1\leq j \leq M}} \alpha_{m, j}w_{M}^{m, j} + \sum_{1\leq j \leq M}\alpha_{M-1, j}w_M^{M-1, j}\\
  &\hspace{3em} + \sum_{1\leq j \leq M}\left(\alpha_{M, j}\frac{1}{4(1+\sqrt{2})^2} - \alpha_{M-1, j}\left(\frac{1}{\sqrt{2}} - \frac{1}{4}\right)\right)w_M^{M, j}.
\end{align*}
This is a linear combination of the terms $\{w_M^{m, j}\}_{0\leq m \leq M, 1\leq j \leq M} = W^M$; by the inductive hypothesis, this is a set of linearly independent tensors. In the last line above, we see that all $\alpha_{m,j}$ in the first sum must be zero and all $\alpha_{M-1, j}$ in the second sum must be zero. Therefore all $\alpha_{M-1, j}$ in the third sum of the last line must be zero, and hence all $\alpha_{M,j}$ in the third sum must be zero as well. We can thus conclude that in (\ref{eqn:w-lc}), all $\{\alpha_{m,j}\}_{0\leq m\leq M, 1\leq j\leq M}$ must be zero. Therefore, most of the terms in (\ref{eqn:w-lc}) vanish, and we are left with
\begin{equation}
  \label{eqn:w-lc-tail}
  \sum_{\substack{0\leq m \leq M+1 \\ 1\leq j \leq M+1}}\alpha_{m, j}w_{M+1}^{m, j} = \sum_{0\leq m\leq M}\alpha_{m, M+1}w_{M+1}^{m, M+1} + \sum_{1\leq j \leq M+1}\alpha_{M+1, j}w_{M+1}^{M+1, j} = 0.
\end{equation}
Now consider the $(M+1, :, :)$ slice of the above tensor equation, which must equal the zero matrix. For $M+1 = 5$, the $(M+1, :, :)$ slices of $\{w_{M+1}^{m, M+1}\}_{0\leq m\leq M}$ and $\{w_{M+1}^{M+1, j}\}_{1\leq j \leq M+1}$ are as follows, with the slices for greater values of $M+1$ being the obvious extension of the below (with the nonzero values being the same and more zero padding).
{\scriptsize \begin{equation}
  \label{eqn:slices}
  \begin{split}
    w_{M+1}^{0, M+1}[M+1, :, :] &= \begin{bmatrix} 1/8 & 1/24 & 0 & 0 & 0\\ 1/24 & 1/24 & 0 & 0 & 0 \\ 0 & 0 & 0 & 0 & 0\\ 0 & 0 & 0 & 0 & 0\\ 0 & 0 & 0 & 0 & 0\end{bmatrix}\;\;\\
    w_{M+1}^{1, M+1}[M+1, :, :] &= \begin{bmatrix} 1/24 & 1/24 & 0 & 0 & 0\\ 1/24 & 1/4 & 1/24 & 0 & 0 \\ 0 & 1/24 & 1/24 & 0 & 0\\ 0 & 0 & 0 & 0 & 0\\ 0 & 0 & 0 & 0 & 0\end{bmatrix}\;\;\\
    w_{M+1}^{2, M+1}[M+1, :, :] &= \begin{bmatrix} 0 & 0 & 0 & 0 & 0 \\ 0 & 1/24 & 1/24 & 0 & 0\\ 0 & 1/24 & 1/4 & 1/24  & 0 \\ 0 & 0 & 1/24 & 1/24 & 0\\ 0 & 0 & 0 & 0 & 0\end{bmatrix}\;\;\\
    w_{M+1}^{3, M+1}[M+1, :, :] &= \begin{bmatrix} 0 & 0 & 0 & 0 & 0 \\ 0 & 0 & 0 & 0 & 0 \\  0 & 0 & 1/24 & 1/24  & 0 \\ 0 & 0 & 1/24 & 1/4 & 1/12\\ 0 & 0 & 0 & 1/12 & 1/8\end{bmatrix}\;\;\\
    w_{M+1}^{4, M+1}[M+1, :, :] &= \begin{bmatrix} 0 & 0 & 0 & 0 & 0 \\ 0 & 0 & 0 & 0 & 0 \\  0 & 0 & 0 & 0 & 0 \\ 0 & 0 & 0 & 1/24 & 1/12\\ 0 & 0 & 0 & 1/12 & \frac{11}{8}+\frac{1}{\sqrt{2}}\end{bmatrix}\;\;
  \end{split}
  \begin{split}
    w_{M+1}^{M+1, 1}[M+1, :, :] &= \begin{bmatrix} 0 & 0 & 0 & 0 & \frac{(1+\sqrt{2})^2}{6} \\ 0 & 0 & 0 & 0 & 0 \\ 0 & 0 & 0 & 0 & 0 \\ 0 & 0 & 0 & 0 & 0 \\ \frac{(1+\sqrt{2})^2}{6} & 0 & 0 & 0 & 0\end{bmatrix}\\
    w_{M+1}^{M+1, 2}[M+1, :, :] &= \begin{bmatrix} 0 & 0 & 0 & 0 & 0 \\ 0 & 0 & 0 & 0 & \frac{(1+\sqrt{2})^2}{6} \\ 0 & 0 & 0 & 0 & 0 \\ 0 & 0 & 0 & 0 & 0 \\ 0 & \frac{(1+\sqrt{2})^2}{6} & 0 & 0 & 0\end{bmatrix}\\
    w_{M+1}^{M+1, 3}[M+1, :, :] &= \begin{bmatrix} 0 & 0 & 0 & 0 & 0\\ 0 & 0 & 0 & 0 & 0 \\ 0 & 0 & 0 & 0 & \frac{(1+\sqrt{2})^2}{6}  \\ 0 & 0 & 0 & 0 & 0 \\ 0 & 0 & \frac{(1+\sqrt{2})^2}{6} & 0 & 0\end{bmatrix}\\
    w_{M+1}^{M+1, 4}[M+1, :, :] &= \begin{bmatrix} 0 & 0 & 0 & 0 & 0 \\ 0 & 0 & 0 & 0 & 0 \\ 0 & 0 & 0 & 0 & 0 \\ 0 & 0 & 0 & 0 & \frac{(1+\sqrt{2})^2}{6} \\ 0 & 0 & 0 & \frac{(1+\sqrt{2})^2}{6} & 0\end{bmatrix}\\
    w_{M+1}^{M+1, 5}[M+1, :, :] &= \begin{bmatrix} 0 & 0 & 0 & 0 & 0\\ 0 & 0 & 0 & 0 & 0 \\ 0 & 0 & 0 & 0 & 0 \\ 0 & 0 & 0 & 0 & 0 \\ 0 & 0 & 0 & 0 & \frac{(1+\sqrt{2})^2}{2}\end{bmatrix}
  \end{split} 
\end{equation}}
One can show that for all dimensions $M$, these matrices are all linearly independent, and hence the remaining coefficients in (\ref{eqn:w-lc-tail}) must also all be zero. Therefore, all coefficients in (\ref{eqn:w-lc}) must be zero, and hence $W^{M+1}$ is a linearly independent collection, as desired.
\end{proof}
This completes the proof of (\ref{eqn:weak-C-condition}) holding for almost all $C^*$. 

\subsection{Recovery of $p$ from $C^*$ (Proof of Equation (\ref{eqn:weak-p-condition}))}
\label{sec:p-from-Cstar}
Now we wish to show that for $p$ sufficiently close to $p^*$, we have (\ref{eqn:weak-p-condition}), which we repeat here for convenience:
\[\inner{\grad_p L_{m_3^*}(C^*, p)}{p^*-p}<0.\]
We can actually show a stronger statement, that the above is true for \textit{all} $p\neq p^*$. Using (\ref{eqn:loss-jacs-applied}) and the fact that $\alpha$ is linear in $p$, we have 
\begin{align*}\
  \inner{\grad_p L_{m_3^*}(C^*, p)}{p^*-p} &= \inner{2{C^*}^{\otimes 3} \star\alpha(p-p^*)}{{C^*}^{\otimes 3}\star\alpha(p^*-p)}\\
  &= -2\|{C^*}^{\otimes 3}\star \alpha(p - p^*)\|^2.
\end{align*}
Unpacking the definition of $\alpha$ and the $\star$ operator, the term inside the norm on the last line is equal to 
\begin{equation}
\sum_{i=1}^M (p_i - p_i^*)\left(\frac{1}{12}(c_{i-1}^* + c_i^*)^{\otimes 3} + \frac{1}{6}(c_{i-1}^*)^{\otimes 3} + \frac{1}{6}(c_{i}^*)^{\otimes 3}\right).
\end{equation} 
It suffices to show that this is equal to zero if and only if $p_i - p_i^* = 0$ for all $i$. In other words, we need to show that  
\begin{equation}
  \left\{\frac{1}{12}(c_{i-1}^* + c_i^*)^{\otimes 3} + \frac{1}{6}(c_{i-1}^*)^{\otimes 3} + \frac{1}{6}(c_{i}^*)^{\otimes 3}\right\}_{i=1}^M
  \label{eqn:p-indep-tensors}
\end{equation}
is a collection of $M$ linearly independent three-way tensors for $c_i^*$ in generic position. We omit the proof of this because it uses techniques very similar our proof of the independence of $\{F^m\}_{m=0}^M$ in the proof of Lemma \ref{lem:gtilde-trivial-kernel}. This completes the proof of (\ref{eqn:weak-p-condition}) and hence the proof of Proposition \ref{prop:weak-gradient-condition}.

\section{The Basin of Attraction in Corollary \ref{cor:m3-well-posed}}
\label{sec:basin-of-attraction}
Here we briefly investigate the size of the basin of attraction $U$ in Corollary \ref{cor:m3-well-posed}. To do so, we randomly sample four ground truth curves $C^*$ with $M = d = 32$ and compute their third moments $m_3^*$. We then sample a Gaussian noising term $\xi$ with the same shape (as matrices) as $C^*$, then perturb each $C^*$ with a scaling of $\xi$ to get an initialization for gradient descent $\hat{C}_\textrm{init} \defeq C^* + s\xi$; the scale $s$ takes values in $\{0.1, 0.2, \dots, 0.5\}$. We compute $\hat{p}_{\textrm{init}}$ from $\hat{C}_\textrm{init}$, perform alternating gradient descent on the loss $L_{m_3^*}$ with $\hat{C}_\textrm{init}$ and $\hat{p}_\textrm{init}$ as initializations, and study the resulting $\hat{C}$; we plot $C^*$, $\hat{C}_\textrm{init}$, and $\hat{C}$ for various scales in Figure \ref{fig:basin-of-attraction}. We see that qualitatively, the alternating gradient descent is fairly robust under moderate perturbations, breaking at noise scale around $s=0.4$. In Figure \ref{fig:basin-of-attraction-line-plot}, for each ground truth $C^*$, we plot the error of the perturbation $\sqrt{\rho(C^*, \hat{C}_{\textrm{init}})}$ against the error of the result of gradient descent $\sqrt{\rho(C^*, \hat{C})}$ over the five perturbation scales.
\begin{figure}
    \centering
    \includegraphics[width = 0.9\textwidth]{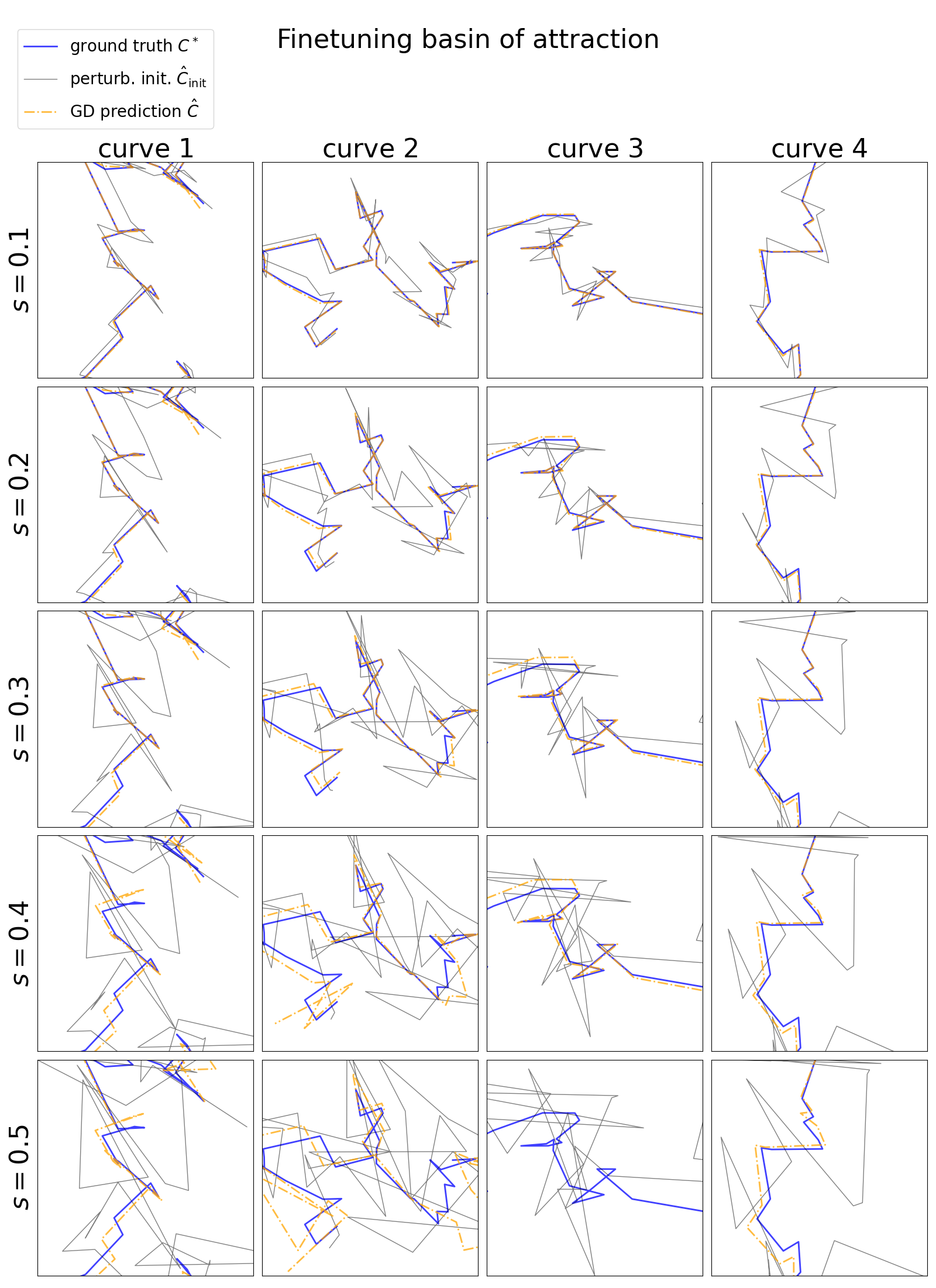} 
    \caption{Each column corresponds to a randomly sampled ground truth curve $C^*$ with $M = d = 32$. Each row corresponds to a different noising level $s$ applied to $C^*$ (blue) to obtain an initialization $\hat{C}_\textrm{init}$ (gray), which we use for gradient descent to obtain a predicted curve $\hat{C}$ (orange). We see that in these cases, the gradient descent fails to converge around $s = 0.4$. Even at noise level $s = 0.3$ where $\hat{C}_\textrm{init}$ is quite perturbed compared to $C^*$, the initialization appears to be within the basin of attraction. There is no gradient descent prediction in the last row for curve 3 due to convergence to NaN from numerical issues.}
    \label{fig:basin-of-attraction}
\end{figure}

\begin{figure}
    \centering
    \includegraphics[width = 0.6\textwidth]{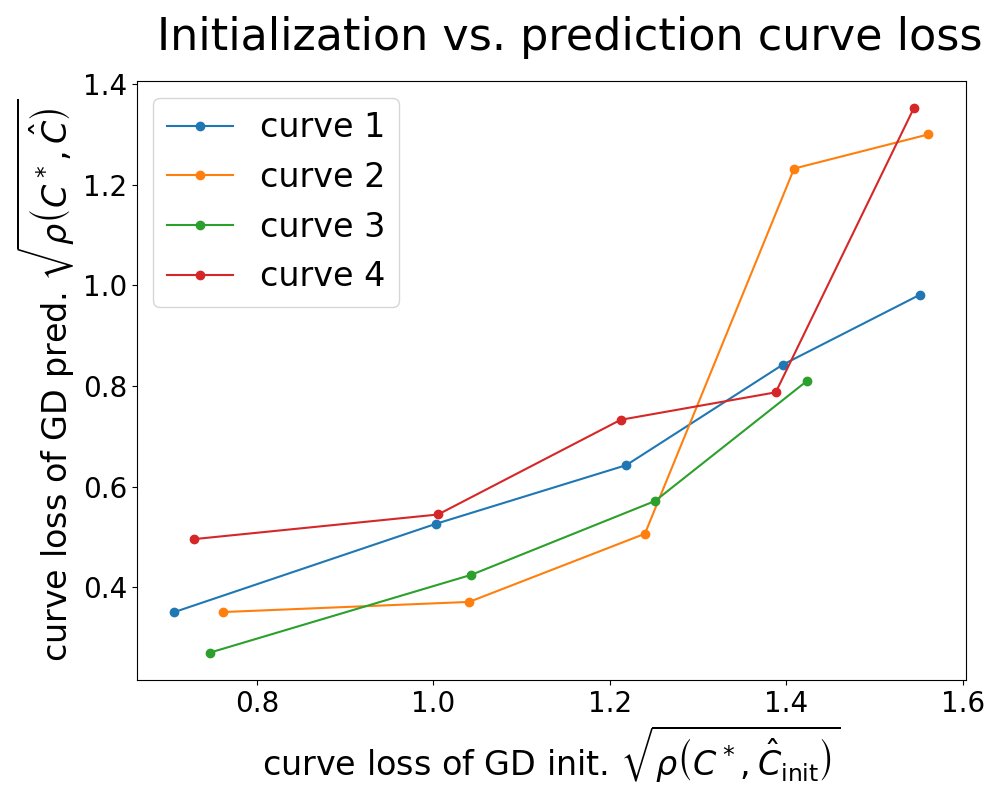}
    \caption{Each line corresponds to a ground truth curve from the columns of Figure \ref{fig:basin-of-attraction} and plots the error of the perturbed gradient descent initialization $\sqrt{\rho\left(C^*, \hat{C}_\textrm{init}\right)}$ versus the error of the result of gradient descent $\sqrt{\rho\left(C^*, \hat{C}\right)}$.}
    \label{fig:basin-of-attraction-line-plot}
\end{figure}

\section{The Heuristic Tensor Power Method Decomposition of the Third Moment}
\label{sec:tpm-cross-terms-ok}
Here we provide some numerical evidence justifying the discarding of cross terms in (\ref{eqn:m3-tpm}) and applying the tensor power method. In Figure \ref{fig:tpm-cross-terms-ok}, we have plotted the ground truth subspaces of the ground truth vertices of curves $C$ (in blue) and the predicted subspaces obtained by our heuristic tensor power decomposition of (in orange). The curves used are the same curves from the experiment in Figure \ref{fig:true-cnst}, using curves sampled in $\mathbb{R}^{48}$ with $M = 32$ segments, with curves sampled by iteratively sampling directions on the unit sphere uniformly randomly. Subplot titles indicate the average discrepancy in degrees between the ground truth and predicted subspaces after applying our heuristic algorithm that sorts the subspaces. We see that in each case, the predicted and ground truth subspaces are approximately aligned, and the average discrepancy between the ground truth and predicted subspaces is roughly 5 degrees. This suggests that discarding the cross terms in the third moment expression (\ref{eqn:m3-tpm}) does not alter the third moment enough to significantly affect its decomposition via the tensor power method.
\begin{figure}
    \includegraphics[width = \textwidth]{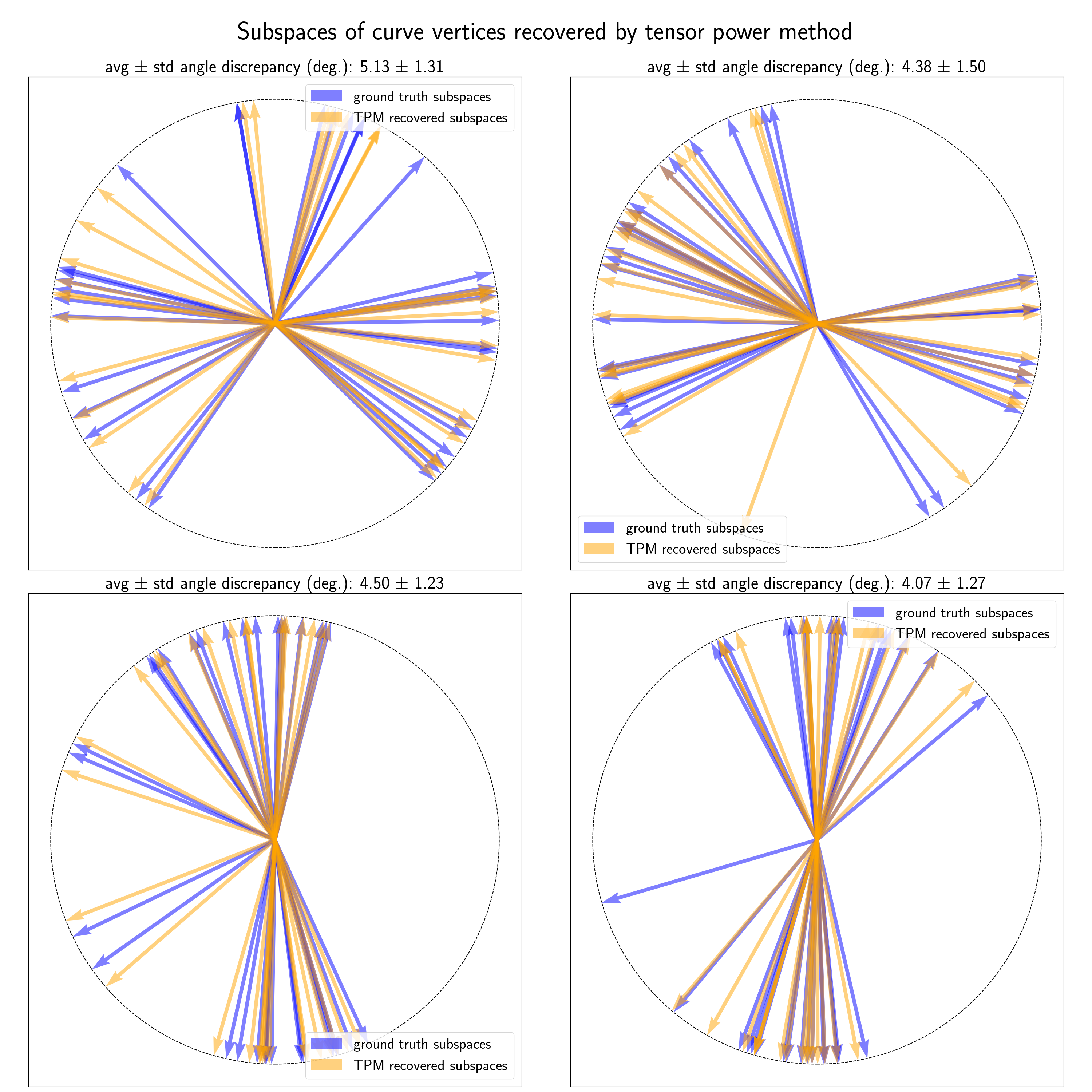}
    \caption{Ground truth subspaces of curve vertices (blue) compared to predicted subspaces obtained by applying the TPM to the third moment tensor (orange). The ground truth curves are the same as those from Figure \ref{fig:true-cnst}. Dashed black lines indicate the unit circle. Subspaces have been projected down to the first two dimensions of $\mathbb{R}^{48}$ for visualization purposes.}
    \label{fig:tpm-cross-terms-ok}
\end{figure}

\section{Estimation of the Third Moment from Data}
\label{sec:third-mom-estimation}
Here, we briefly study the accuracy of estimation of the third moment of a noisy curve from data given varying number of samples from the noisy curve model (\ref{eqn:noisy-curve-model}). In this experiment, we sample four random curves with the same parameters as the experiment in Figure \ref{fig:cloud-cnst} (with ambient dimension $d = 24$, number of segments $M = 16$, noise level $\sigma = 1/4$). We then estimate the moments of these four curves using the unbiased estimators in \ref{lem:unbiased-estimators} from $N$ samples from (\ref{eqn:noisy-curve-model}), where $N = 10^4, 10^5,\dots, 10^{10}$, and compute the average entrywise relative error between the estimated and ground truth third moments. We show these results in Figure \ref{fig:third-mom-estimation-err}. Observe that in three out of four cases, $10^8$ samples (which is the number of samples used in the experiment from Figure \ref{fig:cloud-cnst}) is roughly the number of samples needed to obtain a roughly one percent relative error per entry of the third moment tensor. 

\begin{figure}
    \centering
    \includegraphics[width = 0.7\textwidth]{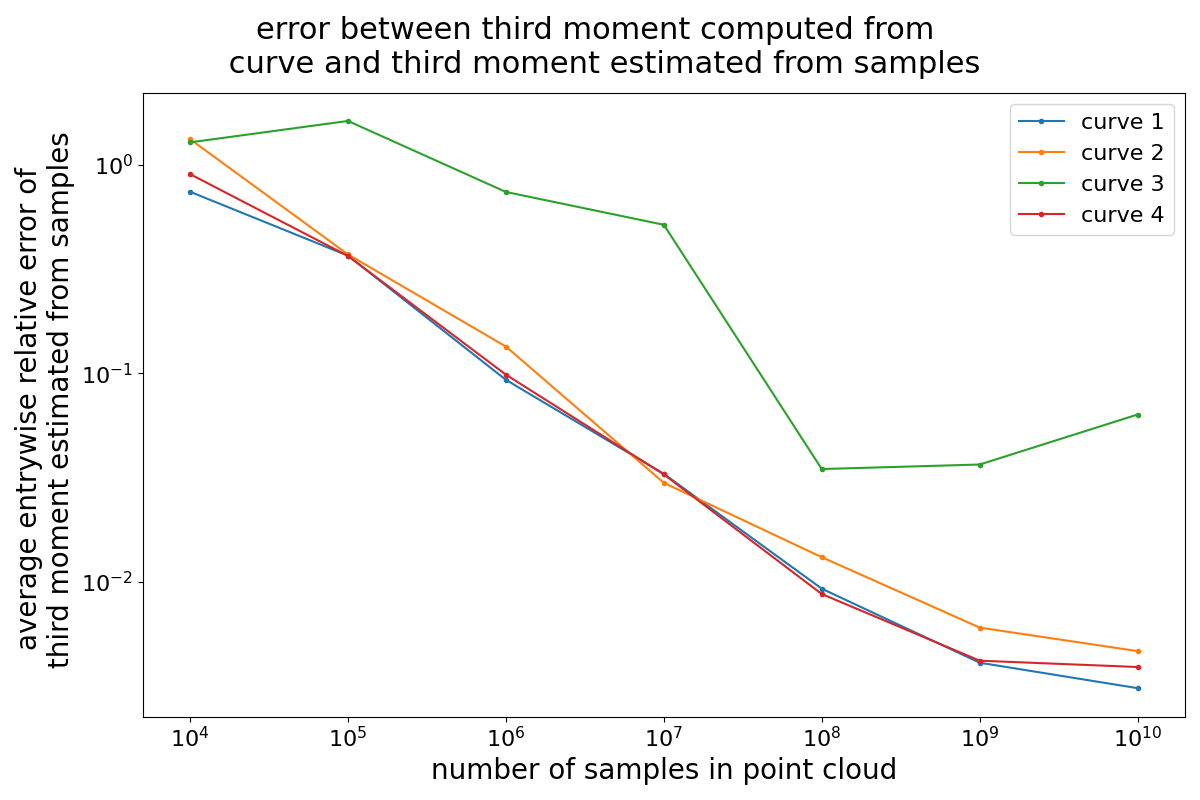}
    \caption{Log-log plot of average entrywise relative error of third moment tensor estimated from data for various sample sizes for four different ground truth curves with $d = 24$, $M = 16$, $\sigma = 1/4$.}
    \label{fig:third-mom-estimation-err}
\end{figure}

\section{Miscellaneous Technical Lemmas}
In this section, we collect miscellaneous technical lemmas and information-theoretic facts.  

\begin{lem}
  \label{lem:tensor-pwr-diff-bound}
  Let $x, y\in\bbR^d$, $\|x\|, \|y\|\leq 1$. Suppose also that we have $\|x - y\|\leq 1/3$. Then for any $k\geq 1$, we have the bound 
  \begin{equation}
    \label{eqn:tensor-pwr-diff-bound}
    \|x^{\otimes k} - y^{\otimes k}\|_F^2 \leq 12\cdot 2^k\|x - y\|^2.
  \end{equation}
\end{lem}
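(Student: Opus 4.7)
The plan is to use the standard telescoping identity for tensor powers and then control each term by the product of factors' norms. Write
\begin{equation}
x^{\otimes k} - y^{\otimes k} = \sum_{i=1}^{k} x^{\otimes (i-1)} \otimes (x-y) \otimes y^{\otimes (k-i)},
\end{equation}
which one verifies by induction on $k$ (peeling off one factor from the front or back and noting that the sum telescopes). Taking Frobenius norms and applying the triangle inequality gives
\begin{equation}
\|x^{\otimes k} - y^{\otimes k}\|_F \;\leq\; \sum_{i=1}^{k} \bigl\|x^{\otimes (i-1)} \otimes (x-y) \otimes y^{\otimes (k-i)}\bigr\|_F.
\end{equation}

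Next I would use the elementary fact that the Frobenius norm is multiplicative under outer products of vectors, i.e.\ $\|u\otimes v\|_F = \|u\|\,\|v\|$ for vectors, and more generally the Frobenius norm of a tensor product of vectors is the product of their Euclidean norms. Applying this to each summand together with the assumptions $\|x\|,\|y\|\le 1$ yields
\begin{equation}
\bigl\|x^{\otimes (i-1)} \otimes (x-y) \otimes y^{\otimes (k-i)}\bigr\|_F \;=\; \|x\|^{i-1}\|x-y\|\|y\|^{k-i} \;\leq\; \|x-y\|.
\end{equation}
Summing over the $k$ terms and squaring then gives the clean intermediate bound $\|x^{\otimes k} - y^{\otimes k}\|_F^2 \le k^2 \|x-y\|^2$.

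To finish, it remains to absorb the $k^2$ prefactor into $12 \cdot 2^k$. A quick induction (or a term-by-term check at $k=1,2,3$ together with the fact that $2^k$ eventually grows much faster than $k^2$) shows $k^2 \le 12\cdot 2^k$ for every $k \ge 1$; indeed $1\le 24$, $4\le 48$, $9\le 96$, and the ratio $2^k/k^2$ is eventually increasing and already large. Chaining the two inequalities yields (\ref{eqn:tensor-pwr-diff-bound}). The main (and only) ``obstacle'' is really just recognising the telescoping identity and the multiplicativity of the Frobenius norm on rank-one tensors; once those are in hand the bound is almost immediate, and the hypothesis $\|x-y\|\le 1/3$ is not actually needed for this particular form of the estimate (it is used elsewhere, e.g.\ in Lemma \ref{lem:E-diff-bd} and Proposition \ref{prop:main-chi2-bd}, to keep the Taylor series convergent).
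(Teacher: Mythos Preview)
Your proof is correct and takes a genuinely different, cleaner route from the paper's. The paper expands $\|x^{\otimes k} - y^{\otimes k}\|_F^2 = \|x\|^{2k} - 2\langle x,y\rangle^k + \|y\|^{2k}$ directly, introduces $\gamma = \langle x, y-x\rangle$, applies the binomial theorem to $(\|x\|^2+\gamma)^k$ and $(\|x\|^2+2\gamma+\|x-y\|^2)^k$, and after careful bookkeeping arrives at the intermediate bound $(11\cdot 2^k + k)\|x-y\|^2$. Your telescoping argument is both shorter and sharper: it yields $k^2\|x-y\|^2$, strictly smaller than $12\cdot 2^k\|x-y\|^2$ for every $k\geq 1$, and it does not need the hypothesis $\|x-y\|\leq 1/3$ at all.

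One small correction to your closing remark: in the paper, the $1/3$ assumption is used precisely \emph{in the proof of this lemma} (it is what makes $|2\gamma + \|x-y\|^2|\leq 1$ so that the binomial tails can be bounded termwise), and is then carried forward as a hypothesis into Lemma~\ref{lem:E-diff-bd} and Proposition~\ref{prop:main-chi2-bd}; it is not needed there for Taylor convergence, since the exponential series converges everywhere. With your argument the hypothesis becomes superfluous for the entire chain.
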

The proof is adapted from the proof of Lemma B.12 in \cite{mra-background}.
\begin{proof}
  Let $\gamma \defeq \inner{x}{y-x}$. Note $|\gamma|\leq \|x\|\|x-y\|\leq\|x-y\|$. Observe that we can write
  \begin{align*}
    \inner{x}{y} &= \|x\|^2 + \gamma\\
    \|y\|^2 &= \|x\|^2 + 2\gamma + \|x - y\|^2.
  \end{align*}  
  By the binomial theorem,
  \begin{align*}
    \|x^{\otimes k} - y^{\otimes k}\|_F^2 &= \|x^{\otimes k }\|^2 - 2\inner{x^{\otimes k}}{y^{\otimes k}} + \|y^{\otimes k}\|_F^2\\
    &= \|x\|^{2k} - 2\inner{x}{y}^k + (\|x\|^2 + 2\gamma + \|x - y\|^2)^k\\
    &= \|x\|^{2k} - 2\left(\|x\|^2 + \gamma\right)^k + \left(\|x\|^2 + 2\gamma + \|x - y\|^2\right)^k\\
    &= \|x\|^{2k} - 2\sum_{j=0}^k {k\choose j}\|x\|^{2(k - j)}\gamma^j + \sum_{j=0}^m {k\choose j}\|x\|^{2(k - j)}(2\gamma + \|x - y\|^2)^j\\
    &= \|x\|^{2k} - 2\left(\|x\|^{2k} + k\|x\|^{2k - 2}\gamma + \sum_{j=2}^k {k\choose j}\|x\|^{2(k - j)}\gamma^j\right)\\
    & \hspace{3em} + \Biggl( \|x\|^{2k} + k\|x\|^{2k - 2}(2\gamma + \|x-y\|^2) \\
    &\hspace{4em} + \sum_{j=2}^k {k\choose j}\|x\|^{2(k - j)}(2\gamma + \|x - y\|^2)^j\Biggr)\\
    &= -2\gamma^2\sum_{j=2}^{k}{k\choose j}\underbrace{\|x\|^{2(k - j)}\gamma^{j-2}}_{\geq -1} + k\underbrace{\|x\|^{2k - 2}}_{\leq 1}\|x - y\|^2 \\
    & \hspace{3em} + (2\gamma + \|x - y\|^2)^2\sum_{j = 2}^k\underbrace{\|x\|^{2(k - j)}(2\gamma + \|x - y\|^2)^{j - k}}_{\leq 1}\\
    &\leq 2\gamma^2\cdot 2^k + k\|x-y\|^2 + \left(4\gamma^2 + 4\gamma\|x - y\|^2 + \|x - y\|^4\right)2^k\\
    &\leq(11\cdot 2^k + k)\|x-y\|^2\\
    &\leq 12\cdot 2^k\|x - y\|^2. 
  \end{align*}
  In the third line from the bottom we are using the fact that the sum of the binomial coefficients is equal to $2^m$.
\end{proof}

\begin{defn}
  Let $p$ and $q$ be the densities of two measures $P$ and $Q$ that are absolutely continuous with respect to the Lebesgue measure. The $\chi^2$ divergence, $\KL$ divergence, and $\TV$ distance are defined as 
  \begin{align*}
    \chi^2(P\|Q) &= \int\frac{(p(x) - q(x))^2}{q(x)}\,dx\\
    \KL(P\|Q) &= \int p(x)\log\left(\frac{p(x)}{q(x)}\right)\,dx\\
    \TV(P,Q) &= \sup_A|P(A) - Q(A)| = \sup\left|\int_A (p(x) - q(x))\,dx\right|.
  \end{align*}
  In the definition of the $\TV$ norm, the supremum is over all measurable sets.
\end{defn}

\begin{lem}
  \label{lem:KL-leq-chi2}
  We have the bound 
  \begin{equation}
    \KL(P\|Q)\leq \chi^2(P\|Q).
  \end{equation}
\end{lem}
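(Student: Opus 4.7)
The plan is to use the elementary inequality $\log(x)\leq x-1$ for all $x>0$, which is the standard tool for deriving this kind of bound between $f$-divergences. Apply it pointwise with $x = p(x)/q(x)$ (wherever $q(x)>0$; standard conventions handle the nullset where $q$ vanishes), and multiply through by $p(x)\geq 0$ to obtain
\begin{equation*}
  p(x)\log\!\left(\tfrac{p(x)}{q(x)}\right)\;\leq\;p(x)\!\left(\tfrac{p(x)}{q(x)}-1\right)\;=\;\tfrac{p(x)^2}{q(x)}-p(x).
\end{equation*}
Integrate over $\bbR^d$ and use $\int p=1$ to conclude that $\KL(P\|Q)\leq \int p^2/q\,dx-1$.

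Next I would rewrite $\chi^2(P\|Q)$ in the same form by expanding the square:
\begin{equation*}
  \chi^2(P\|Q)=\int\frac{(p-q)^2}{q}\,dx=\int\frac{p^2}{q}\,dx-2\int p\,dx+\int q\,dx=\int\frac{p^2}{q}\,dx-1.
\end{equation*}
Comparing the two displays immediately yields $\KL(P\|Q)\leq \chi^2(P\|Q)$, which is the claim.

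There is no real obstacle here; the only subtlety is bookkeeping at points where $q(x)=0$. On such a set, absolute continuity of $P$ with respect to $Q$ (implicit whenever $\KL$ or $\chi^2$ is finite) forces $p(x)=0$ as well, and by the usual $0\log 0 = 0$ and $0/0 = 0$ conventions both integrands are zero there, so the pointwise inequality and its integrated form are unaffected. If $P$ is not absolutely continuous with respect to $Q$, both $\KL(P\|Q)$ and $\chi^2(P\|Q)$ are $+\infty$ and the inequality is trivial.
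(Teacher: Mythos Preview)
Your proof is correct and follows exactly the approach indicated in the paper, which simply states that the bound is an immediate consequence of $\log(x)\leq x-1$. You have just spelled out the details of that one-line argument.
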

\begin{proof}
  This is an immediate consequence of the fact that $\log(x)\leq x - 1$.
\end{proof}

\begin{lem}[Pinsker's Inequality, Lemma 2.5 in \cite{Tsy}]
  \label{lem:pinsker}
  For any two measures $P$ and $Q$, 
  \begin{equation}
    \TV(P, Q)\leq \sqrt{\KL(P\|Q) / 2}.
  \end{equation}
\end{lem}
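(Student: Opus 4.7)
The plan is to prove Pinsker's inequality by the classical two-step reduction: first, the data-processing inequality lets us reduce the general statement to the binary (Bernoulli) case; second, the binary case is settled by elementary calculus.

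For the reduction step, I would choose the optimal set $A = \{x : p(x) \geq q(x)\}$, so that $\TV(P, Q) = P(A) - Q(A)$ (this uses the standard fact that the supremum defining $\TV$ is attained at the set where $p$ dominates $q$). Letting $P_A, Q_A$ denote the Bernoulli measures on $\{0, 1\}$ with parameters $P(A), Q(A)$, the log-sum inequality applied separately on $A$ and $A^c$ gives $\KL(P_A \| Q_A) \leq \KL(P \| Q)$. Since $\TV(P, Q) = \TV(P_A, Q_A)$, it suffices to establish the binary bound: for $p, q \in (0, 1)$,
\begin{equation}
  p \log(p/q) + (1-p)\log((1-p)/(1-q)) \geq 2(p - q)^2,
\end{equation}
with boundary cases $p, q \in \{0, 1\}$ handled by continuity.

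For the binary case, I would fix $p$ and let $g(q)$ denote the difference of the two sides. A direct differentiation, together with the algebraic identity $1/(q(1-q)) - 4 = (1-2q)^2/(q(1-q))$, yields
\begin{equation}
  g'(q) = -\frac{p}{q} + \frac{1-p}{1-q} + 4(p - q) = -(p - q)\cdot \frac{(1 - 2q)^2}{q(1-q)},
\end{equation}
whose sign is opposite to that of $(p - q)$. Hence $g$ decreases on $(0, p)$ and increases on $(p, 1)$; since $g(p) = 0$, we conclude $g(q) \geq 0$ on $(0, 1)$. Chaining the two steps together gives
\begin{equation}
  2\TV(P, Q)^2 = 2(P(A) - Q(A))^2 \leq \KL(P_A \| Q_A) \leq \KL(P \| Q),
\end{equation}
and the theorem follows after taking square roots.

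The only subtle step is the algebraic manipulation that exposes the nonnegative factor $(1-2q)^2/(q(1-q))$ in $g'(q)$; without this simplification the monotonicity claim is not immediately transparent. Everything else is routine: the data-processing reduction is an immediate consequence of the log-sum inequality applied to the two cells of the partition $\{A, A^c\}$, and the characterization of the optimal $A$ is a standard textbook fact about total variation.
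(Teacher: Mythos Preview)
Your proof is correct and follows the classical route (data-processing reduction to the Bernoulli case, then elementary calculus via the identity $1/(q(1-q))-4=(1-2q)^2/(q(1-q))$). The paper itself does not give a proof of this lemma at all; it simply cites it from Tsybakov, so your argument is strictly more than what the paper provides.
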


\begin{lem}
  \label{lem:KLofprod}
  Let $P$ and $Q$ be random variables and $P^{\otimes N}$ and $Q^{\otimes N}$ denote the distribution of $N$ independent draws from $P$ and $Q$. Then $\KL(P^{\otimes N}\|Q^{\otimes N}) = N\KL(P\|Q)$.
\end{lem}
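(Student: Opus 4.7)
The plan is to unfold the definition of KL divergence for the $N$-fold product measure and exploit the fact that log of a product is a sum of logs, so the integral decouples into $N$ copies of $\KL(P\|Q)$.

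First I would write out the joint densities. By independence, $p^{\otimes N}(x_1,\ldots,x_N) = \prod_{i=1}^N p(x_i)$ and similarly $q^{\otimes N}(x_1,\ldots,x_N) = \prod_{i=1}^N q(x_i)$. Substituting into the definition of KL gives
\begin{equation*}
  \KL(P^{\otimes N}\|Q^{\otimes N}) = \int \Biggl(\prod_{i=1}^N p(x_i)\Biggr)\log\frac{\prod_{i=1}^N p(x_i)}{\prod_{i=1}^N q(x_i)}\,dx_1\cdots dx_N.
\end{equation*}
Using $\log\prod_i = \sum_i \log$, this becomes $\sum_{i=1}^N \int \bigl(\prod_j p(x_j)\bigr)\log(p(x_i)/q(x_i))\,dx_1\cdots dx_N$.

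Next I would integrate out the $N-1$ variables $x_j$ for $j\neq i$ in each summand. For each such $j$, the factor $p(x_j)$ integrates to $1$ against $dx_j$ (since $p$ is a density and the integrand is independent of the other variables apart from through these factors). What remains is $\int p(x_i)\log(p(x_i)/q(x_i))\,dx_i = \KL(P\|Q)$. Since every one of the $N$ terms contributes the same value, the total sum is $N\KL(P\|Q)$, as claimed.

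There is no real obstacle here — the argument is a standard application of Fubini together with the factorization of product densities; the only point requiring any care is justifying the interchange of summation and integration, which follows from nonnegativity of the integrands after splitting $\log(p/q) = \log^+ - \log^-$ (or directly assuming $\KL(P\|Q) < \infty$, in which case the trivial identity is vacuous).
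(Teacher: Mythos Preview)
Your proof is correct and follows exactly the same approach as the paper: expand the product density, split the logarithm into a sum, and integrate out the extra variables using that each $p(x_j)$ integrates to $1$. The paper only writes out the case $N=2$ explicitly for notational clarity, but the argument is identical to yours.
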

\begin{proof}
  For notational clarity, we only prove the case where $N = 2$. Then 
  \begin{align*}
    \KL(P^{\otimes 2}\|Q^{\otimes 2}) &= \int_Y\int_X p(x)p(y)\log\left(\frac{p(x)p(y)}{q(x)q(y)}\right)\,dxdy\\
    &= \int_Yp(y)\int_X p(x)\left[\log\left(\frac{p(x)}{q(x)}\right) + \log\left(\frac{p(y)}{q(y)}\right)\right]\,dxdy\\
    &= \int_Y p(y)\int_X p(x)\log\left(\frac{p(x)}{q(x)}\right)\,dxdy + \int_Y p(y)\int_X p(x)\log\left(\frac{p(y)}{q(y)}\right)\,dxdy\\
    &= \int_X p(x)\log\left(\frac{p(x)}{q(x)}\right)\,dx + \int_Y p(y)\log\left(\frac{p(y)}{q(y)}\right)\,dy\\
    &= 2\KL(P\|Q).
  \end{align*}
\end{proof}

\bibliographystyle{plain} 
\bibliography{references}

@misc{anandkumar,
 author = {Anima Anandkumar and Rong Ge and Daniel Hsu and Sham M. Kakade and Matus Telgarsky},
 month = {November},
 eprint = {1210.7559},
 title = {Tensor decompositions for learning latent variable models},
 year = {2014},
 primaryclass = {cs.LG}
}

@article{archaeology,
 author = {Alan E. Gelfand},
 doi = {10.2307/277714},
 journal = {Am. Antiq.},
 number = {3},
 pages = {263–274},
 title = {Seriation Methods for Archaeological Materials},
 year = {1971},
 month = {July},
 volume = {36} 
}

@article{cheng-landa,
 author = {Boris Landa and Xiuyuan Cheng},
 doi = {10.1137/22M1516968},
 journal = {SIAM J. Math. Data Sci.},
 number = {3},
 pages = {589-614},
 title = {Robust Inference of Manifold Density and Geometry by Doubly Stochastic Scaling},
 volume = {5},
 year = {2023},
 month = {July}
}

@misc{feff-high-noise,
 author = {Charles Fefferman and Sergei Ivanov and Matti Lassas and Hariharan Narayanan},
 title = {Fitting a manifold to data in the presence of large noise},
 eprint = {2312.10598},
 primaryclass = {math.ST},
 year = {2023},
 month = {December}
}

@article{feff-noiseless,
 author = {Fefferman, Charles and Mitter, Sanjoy and Narayanan, Hariharan},
 doi = {10.1090/jams/852},
 journal = {J. Amer. Math. Soc.},
 month = {February},
 number = {4},
 pages = {983-1049},
 title = {Testing the manifold hypothesis},
 volume = {29},
 year = {2016}
}

@inproceedings{feff-small-noise,
 author = {Fefferman, Charles and Ivanov, Sergei and Kurylev, Yaroslav and Lassas, Matti and Narayanan, Hariharan},
 booktitle = {Proceedings of the 31st Conference On Learning Theory},
 month = {July},
 pages = {688--720},
 pdf = {http://proceedings.mlr.press/v75/fefferman18a/fefferman18a.pdf},
 publisher = {PMLR},
 series = {Proceedings of Machine Learning Research},
 title = {Fitting a Putative Manifold to Noisy Data},
 volume = {75},
 year = {2018}
}

@misc{gmm-mom,
 archiveprefix = {arXiv},
 author = {João M. Pereira and Joe Kileel and Tamara G. Kolda},
 primaryclass = {stat.ML},
 title = {Tensor Moments of {G}aussian Mixture Models: Theory and Applications},
 eprint = {2202.06930},
 year = {2022},
 month = {March}
}

@misc{hautieng-wu,
 archiveprefix = {arXiv},
 author = {Hau-Tieng Wu},
 eprint = {2401.03921},
 primaryclass = {stat.ML},
 title = {Design a Metric Robust to Complicated High Dimensional Noise for Efficient Manifold Denoising},
 year = {2024},
 month = {January}
}

@article{holmquist,
 author = {Holmquist, Björn},
 doi = {10.1080/07362998808809148},
 journal = {Stochastic Analysis and Applications},
 month = {January},
 number = {3},
 pages = {273--278},
 title = {Moments and cumulants of the multivariate normal distribution},
 volume = {6},
 year = {1988}
}

@misc{jaxopt,
 author = {Blondel, Mathieu and Berthet, Quentin and Cuturi, Marco and Frostig, Roy
and Hoyer, Stephan and Llinares-L{\'o}pez, Felipe and Pedregosa, Fabian
and Vert, Jean-Philippe},
 eprint = {2105.15183},
 primaryclass = {cs.LG},
 title = {Efficient and Modular Implicit Differentiation},
 year = {2021},
 month = {October}
}

@misc{jax,
  author = {James Bradbury and Roy Frostig and Peter Hawkins and Matthew James Johnson and Chris Leary and Dougal Maclaurin and George Necula and Adam Paszke and Jake Vander{P}las and Skye Wanderman-{M}ilne and Qiao Zhang},
  title = {{JAX}: composable transformations of {P}ython+{N}um{P}y programs},
  url = {http://github.com/google/jax},
  version = {0.3.13},
  year = {2018},
}

@misc{kde-noiseless,
 archiveprefix = {arXiv},
 author = {Kitty Mohammed and Hariharan Narayanan},
 eprint = {1709.03615},
 primaryclass = {math.ST},
 title = {Manifold Learning Using Kernel Density Estimation and Local Principal Components Analysis},
 year = {2017},
 month = {September}
}

@article{Lederman,
 author = {Roy R. Lederman and Joakim Andén and Amit Singer},
 doi = {10.1088/1361-6420/ab5ede},
 journal = {Inverse Problems},
 month = {March},
 number = {4},
 pages = {044005},
 title = {Hyper-molecules: on the representation and recovery of dynamical structures for applications in flexible macro-molecules in cryo-EM},
 volume = {36},
 year = {2020}
}

@misc{manifold-unbounded-noise,
 author = {Zhigang Yao and Yuqing Xia},
 eprint = {1909.10228},
 primaryclass = {stat.ML},
 title = {Manifold Fitting under Unbounded Noise},
 year = {2024},
 month = {June}
}

@misc{manifoldron,
 author = {Dayang Wang and Feng-Lei Fan and Bo-Jian Hou and Hao Zhang and Zhen Jia and Boce Zhou and Rongjie Lai and Hengyong Yu and Fei Wang},
 eprint = {2201.05279},
 primaryclass = {cs.LG},
 title = {Manifoldron: Direct Space Partition via Manifold Discovery},
 year = {2022},
 month = {May}
}

@article{meila-review,
 author = {Meilă, Marina and Zhang, Hanyu},
 doi = {https://doi.org/10.1146/annurev-statistics-040522-115238},
 journal = {Annu. Rev. Stat. Appl.},
 pages = {393-417},
 title = {Manifold Learning: What, How, and Why},
 volume = {11},
 year = {2024},
 month = {April}
}

@article{Moscovich,
 author = {Amit Moscovich and Amit Halevi and Joakim Andén and Amit Singer},
 doi = {10.1088/1361-6420/ab4f55},
 journal = {Inverse Problems},
 month = {January},
 number = {2},
 pages = {024003},
 title = {Cryo-EM reconstruction of continuous heterogeneity by {L}aplacian spectral volumes},
 volume = {36},
 year = {2020}
}

@article{mra,
 author = {Perry, Amelia and Weed, Jonathan and Bandeira, Afonso S. and Rigollet, Philippe and Singer, Amit},
 doi = {10.1137/18M1214317},
 journal = {SIAM J. Math. Data Sci.},
 number = {3},
 pages = {497-517},
 title = {The Sample Complexity of Multireference Alignment},
 volume = {1},
 year = {2019},
}

@misc{mra-background,
 author = {Afonso S. Bandeira and Philippe Rigollet and Jonathan Weed},
 eprint = {1702.08546},
 primaryclass = {math.ST},
 title = {Optimal rates of estimation for multi-reference alignment},
 year = {2018},
 month = {May}
}

@article{singer,
 author = {Singer, Amit and Sigworth, Fred J.},
 doi = {10.1146/annurev-biodatasci-021020-093826},
 journal = {Annu. Rev. Biomed. Data Sci.},
 month = {July},
 number = {1},
 pages = {163--190},
 title = {Computational methods for Single-Particle Electron Cryomicroscopy},
 volume = {3},
 year = {2020}
}

@article{spectral-seriation-1,
 author = {Atkins, Jonathan E. and Boman, Erik G. and Hendrickson, Bruce},
 doi = {10.1137/S0097539795285771},
 journal = {SIAM J. Comput.},
 number = {1},
 pages = {297-310},
 title = {A Spectral Algorithm for Seriation and the Consecutive Ones Problem},
 volume = {28},
 year = {1998}
}

@inproceedings{spectral-seriation-2,
 author = {Fogel, Fajwel and d\textquotesingle Aspremont, Alexandre and Vojnovic, Milan},
 booktitle = {Advances in Neural Information Processing Systems},
 editor = {Z. Ghahramani and M. Welling and C. Cortes and N. Lawrence and K.Q. Weinberger},
 publisher = {Curran Associates, Inc.},
 title = {Serial{R}ank: Spectral Ranking using Seriation},
 volume = {27},
 year = {2014}
}

@article{spectral-seriation-3,
 author = {Rigollet, Philippe and Weed, Jonathan},
 doi = {10.1093/imaiai/iaz006},
 journal = {Inf. Inference},
 number = {4},
 pages = {691-717},
 title = {{Uncoupled isotonic regression via minimum Wasserstein deconvolution}},
 volume = {8},
 year = {2019},
 month = {April}
}

@book{Tsy,
 author = {Tsybakov, A. B.},
 doi = {https://doi.org/10.1007/b13794},
 publisher = {Springer},
 title = {Introduction to Nonparametric Estimation},
 year = {2009}
}

@misc{yuehaw-seriation,
 author = {Yuehaw Khoo and Xin T. Tong and Wanjie Wang and Yuguan Wang},
 eprint = {2406.13635},
 primaryclass = {stat.ME},
 title = {Temporal label recovery from noisy dynamical data},
 year = {2024}
}

@book{Tu,
  title = {An Introduction to Manifolds},
  ISBN = {9781441974006},
  ISSN = {2191-6675},
  doi = {http://dx.doi.org/10.1007/978-1-4419-7400-6},
  journal = {Universitext},
  publisher = {Springer New York},
  author = {Tu,  Loring W.},
  year = {2011},
  edition = {Second}
}

@misc{determinant,
 archiveprefix = {arXiv},
 author = {Steven J. Gortler and Alexander D. Healy and Dylan P. Thurston},
 eprint = {0710.0926},
 primaryclass = {math.MG},
 title = {Characterizing generic global rigidity},
 year = {2010}
}

@misc{analytic-zeros,
      title={The Zero Set of a Real Analytic Function}, 
      author={Boris Mityagin},
      year={2015},
      eprint={1512.07276},
      primaryclass={math.CA},
}

@article{genovese-minimax,
    author = {Christopher R. Genovese and Marco Perone-Pacifico and Isabella Verdinelli and Larry Wasserman},
    title = {{Manifold estimation and singular deconvolution under Hausdorff loss}},
    volume = {40},
    journal = {The Annals of Statistics},
    number = {2},
    publisher = {Institute of Mathematical Statistics},
    pages = {941 -- 963},
    keywords = {Deconvolution, manifold learning, minimax},
    year = {2012},
    doi = {10.1214/12-AOS994},
    URL = {https://doi.org/10.1214/12-AOS994}
    }

@article{principal-curves,
author = {Trevor Hastie and Werner Stuetzle},
title = {Principal Curves},
journal = {Journal of the American Statistical Association},
volume = {84},
number = {406},
pages = {502--516},
year = {1989},
publisher = {ASA Website},
doi = {10.1080/01621459.1989.10478797},
URL = {https://www.tandfonline.com/doi/abs/10.1080/01621459.1989.10478797},
eprint = {https://www.tandfonline.com/doi/pdf/10.1080/01621459.1989.10478797}
}
\end{document}